\documentclass[12pt]{article}

\usepackage{amsmath, amsthm, amssymb}
\usepackage{enumerate}
\usepackage{pdflscape}
\usepackage{caption}

\usepackage{ifpdf}
\ifpdf
\usepackage[pdftex]{graphicx}
\else
\usepackage[dvips]{graphicx}
\fi
\usepackage{tikz}
 	 \usetikzlibrary{arrows,backgrounds}
\usepackage[all]{xy}

\usepackage{multicol}

\usepackage{tocvsec2}

\input xy
\xyoption{all}

\usepackage[pdftex,plainpages=false,hypertexnames=false,pdfpagelabels]{hyperref}
\newcommand{\arxiv}[1]{\href{http://arxiv.org/abs/#1}{\tt arXiv:\nolinkurl{#1}}}
\newcommand{\arXiv}[1]{\href{http://arxiv.org/abs/#1}{\tt arXiv:\nolinkurl{#1}}}

\newcommand{\googlebooks}[1]{(preview at \href{http://books.google.com/books?id=#1}{google books})}

\usepackage{xcolor}
\definecolor{medium-blue}{rgb}{0,0,.8}
\hypersetup{
   colorlinks, linkcolor={purple},
   citecolor={medium-blue}, urlcolor={medium-blue}
}

\usepackage{longtable}
\usepackage{fullpage}

\setlength\topmargin{-.25in}
\setlength\headheight{0in}
\setlength\headsep{.2in}
\setlength\textheight{9in}
\setlength\parindent{0.25in}

\theoremstyle{plain}
\newtheorem{thm}{Theorem}[section]
\newtheorem*{thm*}{Theorem}
\newtheorem{thmalpha}{Theorem}

\newtheorem*{cor*}{Corollary}
\newtheorem{conj}[thm]{Conjecture}

\newtheorem*{conj*}{Conjecture}
\newtheorem{lem}[thm]{Lemma}

\newtheorem{prop}[thm]{Proposition}
\newtheorem{quest}[thm]{Question}
\newtheorem*{quest*}{Question}
\newtheorem*{claim*}{Claim}

\theoremstyle{definition}
\newtheorem{defn}[thm]{Definition}

\newtheorem{nota}[thm]{Notation}

\newtheorem{ex}[thm]{Example}
\newtheorem{sub-ex}[thm]{Sub-Example}
\newtheorem{rem}[thm]{Remark}
\newtheorem*{rem*}{Remark}


\DeclareMathOperator{\Bim}{Bim}

\DeclareMathOperator{\End}{End}
\DeclareMathOperator{\ev}{ev}
\DeclareMathOperator{\Hom}{Hom}

\DeclareMathOperator{\Ob}{Ob}
\DeclareMathOperator{\op}{op}

\DeclareMathOperator{\id}{id}

\DeclareMathOperator{\Irr}{Irr}


\newcommand{\comment}[1]{\textcolor{red}{[stuff commented out]}}

\newcommand{\be}{\begin{enumerate}[label=(\arabic*)]}
\newcommand{\ee}{\end{enumerate}}

\newcommand{\N}{\mathbb{N}}

\newcommand{\C}{\mathbb{C}}

\newcommand{\n}{\mathfrak{n}}

\def\semicolon{;}
\def\applytolist#1{
    \expandafter\def\csname multi#1\endcsname##1{
        \def\multiack{##1}\ifx\multiack\semicolon
            \def\next{\relax}
        \else
            \csname #1\endcsname{##1}
            \def\next{\csname multi#1\endcsname}
        \fi
        \next}
    \csname multi#1\endcsname}

\def\calc#1{\expandafter\def\csname c#1\endcsname{{\mathcal #1}}}
\applytolist{calc}QWERTYUIOPLKJHGFDSAZXCVBNM;
\def\bbc#1{\expandafter\def\csname bb#1\endcsname{{\mathbb #1}}}
\applytolist{bbc}QWERTYUIOPLKJHGFDSAZXCVBNM;
\def\bfc#1{\expandafter\def\csname bf#1\endcsname{{\mathbf #1}}}
\applytolist{bfc}QWERTYUIOPLKJHGFDSAZXCVBNM;
\def\sfc#1{\expandafter\def\csname s#1\endcsname{{\sf #1}}}
\applytolist{sfc}QWERTYUIOPLKJHGFDSAZXCVBNM;

\newcommand{\Rep}{{\sf Rep}}

\renewcommand{\Vec}{{\sf Vec}}
\newcommand{\Hilb}{{\sf Hilb}}

\newcommand{\noshow}[1]{}
\newcommand{\MR}[1]{}

\usepackage[utf8]{inputenc}
\DeclareUnicodeCharacter{2693}{\anchor}
\usepackage{dingbat}

\usetikzlibrary{shapes}
\usetikzlibrary{backgrounds}
\usetikzlibrary{decorations,decorations.pathreplacing,decorations.markings}
\usetikzlibrary{fit,calc,through}
\usetikzlibrary{external}
\tikzset{
	super thick/.style={line width=3pt}
}
\tikzstyle{shaded}=[fill=red!10!blue!20!gray!30!white]
\tikzstyle{unshaded}=[fill=white]
\tikzstyle{empty box}=[circle, draw, thick, fill=white, opaque, inner sep=2mm]
\tikzstyle{annular}=[scale=.7, inner sep=1mm, baseline]
\tikzstyle{rectangular}=[scale=.75, inner sep=1mm, baseline=-.1cm]
\tikzstyle{mid>}=[decoration={markings, mark=at position 0.5 with {\arrow{>}}}, postaction={decorate}]
\tikzstyle{mid<}=[decoration={markings, mark=at position 0.5 with {\arrow{<}}}, postaction={decorate}]
\tikzstyle{over}=[double, draw=white, super thick, double=]

\newcommand{\roundNbox}[6]{
	\draw[rounded corners=5pt, very thick, #1] ($#2+(-#3,-#3)+(-#4,0)$) rectangle ($#2+(#3,#3)+(#5,0)$);
	\coordinate (ZZa) at ($#2+(-#4,0)$);
	\coordinate (ZZb) at ($#2+(#5,0)$);
	\node at ($1/2*(ZZa)+1/2*(ZZb)$) {#6};
}

\newcommand{\halfDottedEllipse}[3]{
	\draw[thick] #1 arc(-180:0:{#2} and {#3});
	\draw[thick, dotted] ($ #1 + 2*(#2,0)$) arc(0:180:{#2} and {#3});
}

  \newcommand{\tikzmath}[2][]
     {\vcenter{\hbox{\begin{tikzpicture}[#1]#2
                     \end{tikzpicture}}}
     }

\newcommand{\alphacolor}{blue}
\newcommand{\betacolor}{orange}


\begin{document}
\title{Bicommutant categories from fusion categories}
\author{Andr\'{e} Henriques and David Penneys}
\date{\today}
\maketitle
\begin{abstract}
Bicommutant categories are higher categorical analogs of von Neumann algebras that were recently introduced by the first author.
In this article, we prove that every unitary fusion category gives an example of a bicommutant category.
This theorem categorifies the well known result according to which a finite dimensional $*$-algebra that can be faithfully represented on a Hilbert space is in fact a von Neumann algebra.
\end{abstract}

\tableofcontents

\section{Introduction}

Bicommutant categories were introduced by the first author in the recent preprint \cite{1503.06254}
, as a categorification of the notion of a von Neumann algebra.

Recall that a von Neumann algebra is a subalgebra
of the algebra of bounded operators on a Hilbert space which is equal to its bicommutant:
\[
\qquad\qquad\qquad A\subset B(H)\,\,\quad\mathrm{s.t.}\quad A=A''\qquad\qquad\mathrm{(von\,Neumann\,algebra).}
\]
Bicommutant categories are defined similarly.
They are tensor categories equipped with a tensor functor to the category $\Bim(R)$ of all separable bimodules over a hyperfinite factor, such that the natural comparison functor from the category to its bicommutant is an equivalence of categories:
\[
\qquad\qquad\qquad 
\cC\to \Bim(R)\,\quad\mathrm{s.t.}\quad
\cC\stackrel\simeq\to\cC''
\qquad\qquad\mathrm{(bicommutant\,category).}
\]

The main result of this paper is that every unitary fusion category gives an example of a bicommutant category.
The fusion categories themselves are not bicommutant categories, as they do not admit infinite direct sums:
in a fusion category, every object is a \emph{finite} direct sum of simple objects.
In other words, every object is of the form $\bigoplus_i c_i\otimes V_i$
for some finite dimensional vector spaces $V_i\in\mathsf{Vec}$ and simple objects $c_i\in\cC$.
In order to make $\cC$ into a bicommutant category, we need to allow the $V_i$ to be arbitrary separable Hilbert spaces.
The resulting category is denoted $\cC\otimes_\Vec\Hilb$ (this is an instance of balanced tensor product of linear categories \cite{MR1843311}
). Our main result is:

\begin{thmalpha}
\label{thm:Main}
If $\cC$ is a unitary fusion category, then $\cC\otimes_\Vec\Hilb$ is a bicommutant category.
\end{thmalpha}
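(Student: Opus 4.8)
The plan is to categorify the classical proof that a faithfully represented finite-dimensional $*$-algebra $A\cong\bigoplus_i M_{n_i}(\C)$ coincides with its bicommutant. Classically one decomposes the representation space into isotypic components $H=\bigoplus_i H_i\otimes\C^{n_i}$, identifies $A'=\bigoplus_i B(H_i)\otimes 1$ as the algebra acting on the multiplicity spaces, and then reads off $A''=\bigoplus_i 1\otimes M_{n_i}(\C)=A$. The separable multiplicity spaces $H_i$ are precisely what force the passage from $\cC$ to $\cC\otimes_\Vec\Hilb$, so I expect the three categories $\cC\otimes_\Vec\Hilb$, its commutant, and its bicommutant to play the roles of $A$, $A'$, and $A''$ respectively.

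First I would construct the representation. Fixing $R$ to be the hyperfinite $\mathrm{II}_1$ factor, I would realize $\cC$ as a full, replete, rigid subcategory of the bifinite bimodules in $\Bim(R)$, using the standard fact that every unitary fusion category admits such a realization. Since $\Bim(R)$ is closed under separable direct sums, extending scalars along $\Vec\hookrightarrow\Hilb$ promotes this to a faithful unitary tensor functor
\[
\alpha\colon \cC\otimes_\Vec\Hilb\longrightarrow\Bim(R),
\]
with the tensor structure inherited from $\cC$. I would check that $\alpha$ is fully faithful onto a full subcategory of $\Bim(R)$ that is closed under separable direct sums and unitary retracts, the categorical analog of being weakly closed.

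Next I would compute the commutant $(\cC\otimes_\Vec\Hilb)'$ in the sense of \cite{1503.06254}: an object is a bimodule $X\in\Bim(R)$ equipped with a coherent half-braiding against every $\alpha(c)$, and a morphism is a bimodule map intertwining these half-braidings. Using unitarity and semisimplicity of $\cC$ I would decompose $X$ into its isotypic pieces indexed by the simple objects of $\cC$ and analyze the half-braiding one simple at a time, with rigidity supplying the coherence needed to pin each piece down. The expected outcome is an explicit description of the commutant as the categorified analog of the multiplicity algebra $\bigoplus_i B(H_i)$.

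Finally, taking the commutant once more, I would show that the natural comparison functor $\cC\otimes_\Vec\Hilb\to(\cC\otimes_\Vec\Hilb)''$ is an equivalence. Faithfulness is immediate and fullness follows readily from the previous step; the real content, and the \emph{main obstacle}, is essential surjectivity, namely that every bimodule commuting with all of the multiplicity data already lies in the image of $\alpha$. This is the categorified double-commutant (density) statement: I expect to prove it by showing that closure of $\cC\otimes_\Vec\Hilb$ under separable direct sums plays the role that weak closure plays classically, so that the decomposition into simples together with rigidity reduces the bicommutant to a multiplicity-space computation that returns exactly $\cC\otimes_\Vec\Hilb$.
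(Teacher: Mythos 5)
Your high-level outline --- represent $\cC$ in $\Bim(R)$ via Popa's theorem, study the commutant, then show the comparison functor to the bicommutant is an equivalence --- matches the shape of the paper's argument, and your identification of essential surjectivity as the main obstacle is correct. But the analogy you build the middle of the proof on, namely that the commutant should be ``the categorified analog of the multiplicity algebra $\bigoplus_i B(H_i)$'' obtained by decomposing an object $X$ of the commutant into isotypic pieces indexed by $\Irr(\cC)$, breaks down, and with it the substance of your second and third steps. An object of the commutant is an \emph{arbitrary} bimodule $X\in\Bim(R)$ equipped with a half-braiding, and $\Bim(R)$ is vastly larger than $\cC\otimes_\Vec\Hilb$: no isotypic decomposition of $X$ over $\Irr(\cC)$ is available at that stage. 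Indeed the commutant contains the objects $\underline\Delta(\Lambda)=\bigoplus_{x\in\Irr(\cC)}x\boxtimes\Lambda\boxtimes\overline{x}$ for \emph{every} $\Lambda\in\Bim(R)$, and their endomorphism algebras are tube-algebra-like (Theorem \ref{thm: endo}), not multiplicity algebras. The paper never computes $\cC'$ explicitly; it establishes only the two facts it actually needs: that $\cC'$ admits an absorbing object (Theorem \ref{thm:Absorbing}, which itself rests on the factoriality of $\End_{\cC'}(\underline\Delta(\Lambda))$ for $\Lambda$ absorbing, Theorem \ref{thm:Factor}), and that a half-braiding is determined by its value on an absorbing object (Proposition \ref{prop:DeltaDeterminesHalfBraidings}).

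The assertion that an object of the \emph{bicommutant} has underlying bimodule in $\cC\otimes_\Vec\Hilb$ --- which is where your isotypic decomposition genuinely lives --- is precisely the hardest point, and ``closure under separable direct sums plays the role of weak closure'' is a slogan rather than a proof of it. The paper proves it (Proposition \ref{prop:AllBimodulesInCBoxtimesHilb}) by evaluating the half-braiding of $(X,e_X)\in\cC''$ on the specific object $\underline\Delta(\Lambda_0)$ with $\Lambda_0=L^2R\otimes_{\mathbb C}L^2R$, and exploiting naturality with respect to $\End_{\cC'}(\underline\Delta(\Lambda_0))$ to upgrade a bimodule isomorphism to an isomorphism of quadri-modules, from which the decomposition over $\Irr(\cC)$ with Hilbert-space multiplicities is extracted. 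Essential surjectivity further requires uniqueness of the half-braiding extending a given bimodule (Proposition \ref{prop:UniqueHalfBraiding}), which rests on relative-commutant computations on cyclic fusions (Proposition \ref{prop: comm on cyc}, Lemma \ref{lem: RL half relative commutant}). None of this machinery --- the functor $\underline\Delta$, absorbing objects, the factoriality results, the cyclic fusion calculus --- appears in your plan, so as it stands the proposal restates the goal in classical language rather than proving it.
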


\noindent
By a result of Popa \cite{MR1334479}, every unitary fusion category $\cC$ can be embedded in $\Bim(R)$ (see Theorem~\ref{thm: exists faithful rep}).
We prove that its bicommutant $\cC''$ is equivalent to $\cC\otimes_\Vec \Hilb$, and that the latter is a bicommutant category.

As a special case of the above theorem, if $G$ is a finite group and $\omega$ is a cocycle representing a class $[\omega]\in H^3(G,U(1))$, then the tensor category $\Hilb^\omega[G]$ of $G$-graded Hilbert spaces with associator twisted by $\omega$ is a bicommutant category.
That result was conjectured in \cite[\S6]{1503.06254} 
as part of a bigger conjecture about categories of representations of twisted loop groups.

We summarize the categorical analogy in the table below.
Going left to right is ``categorification'', and going down is passing to the infinite dimensional case:

\begin{center}
\def\arraystretch{1.2}
\begin{tabular}{|c|c|}
\hline
& \\[-3ex]
an algebra $A$ & a tensor category $\cC$
\\
a finite dimensional algebra & a fusion category
\\
the center of an algebra $Z(A)$ & the Drinfeld center $\cZ(\cC)$
\\
the commutant
(or centralizer) $Z_B(A)$ of $A$ in $B$ & the commutant $\cZ_\cD(\cC)$ of $\cC$ in $\cD$
\\[1ex]
\hline
& \\[-2.7ex]
the algebra $B(H)$ of bounded operators &
the category $\Bim(R)$ of all bimodules\\[-.7ex]
on a Hilbert space & on a hyperfinite factor $R$
\\
the commutant $A':=Z_{B(H)}(A)$ & the commutant $\cC':=\cZ_{\Bim(R)}(\cC)$
\\
a von Neumann algebra $A=A''$ & a bicommutant category $\cC\cong \cC''$
\\[.5ex]
\hline
\end{tabular}
\end{center}

We have omitted one technical point in the above discussion.
Von Neumann algebras are not just algebras; they are $*$-algebras
(all the other structures such as the norm and the various topologies can be deduced from the $*$-algebra structure, but the $*$-algebra cannot be deduced from the algebra structure).
Similarly, bicommutant categories are equipped with two involutions which mimic the involutions that are naturally present on $\Bim(R)$. One of the involutions acts at the level of morphisms (the adjoint of a linear map), and the other acts at the level of objects (the complex conjugate of a bimodule).
We call such categories bi-involutive tensor categories (see Definition \ref{def: bi-involutive category}).
Thus, we add the following line to the above table:

\begin{center}
\def\arraystretch{1.2}
\begin{tabular}{|c|c|}
\hline
& \\[-3ex]
\hspace{3.2cm} $*$-algebra $A$ \hspace{3.2cm} & \hspace{.5cm} bi-involutive tensor category $\cC$ \hspace{.5cm}
\\[.5ex]
\hline
\end{tabular}
\end{center}

\subsection{Acknowledgements}
This project began at the 2015 Mathematisches Forschungsinstitut Oberwolfach workshop on Subfactors and conformal field theory.
The authors would like to thank the organizers and MFO for their hospitality.
Andr\'e Henriques was supported by the Leverhulme trust and the EPSRC grant ``Quantum Mathematics and Computation'' during his stay in Oxford.
David Penneys was partially supported by an AMS-Simons travel grant and NSF DMS grant 1500387.

\section{Preliminaries}

\subsection{Involutions on tensor categories}
\label{sec:invoutions}

A linear \emph{dagger category} is a linear category $\cC$ over the complex numbers, equipped with an antilinear map $\cC(x,y)\to\cC(y,x):f\mapsto f^*$ for every $x,y\in \cC$ called the \emph{adjoint} of a morphism. It satisfies $f^{**}=f$ and $(f\circ g)^*=g^*\circ f^*$, from which it follows that $\id_x^*=\id_x$.
An invertible morphism of a dagger category is called \emph{unitary} if $f^*=f^{-1}$.

A functor $F:\cC\to\cD$ between dagger categories is a \emph{dagger functor} if $F(f)^*=F(f^*)$.

\begin{defn}[{\cite[\S7]{MR2767048}}]\label{def: tensor category}
A \emph{dagger tensor category} 
is a linear dagger category $\cC$ equipped with a monoidal structure whose associators $\alpha_{x,y,z}:(x\otimes y)\otimes z\to x\otimes (y\otimes z)$ and unitors $\lambda_x:1\otimes x\to x$ and $\rho_x:x\otimes 1\to x$ are unitary, and which satisfies the compatibility condition $(f\otimes g)^* =f^*\otimes g^*$.
\end{defn}

The last condition can be rephrased as saying that the monoidal product $\otimes:\cC\otimes_{\mathsf{Vec}} \cC\to \cC$ is a dagger functor.
From now on, we shall abuse notation, and omit all associators and unitors from our formulas. We trust the reader to insert them wherever needed.

\begin{defn}\label{def: tensor functor}
Let $\cC$ and $\cD$ be dagger tensor categories.
A \emph{dagger tensor functor} $F: \cC\to \cD$ is a dagger functor equipped with a unitary natural transformation $\mu_{x,y}:F(x)\otimes F(y)\to F(x\otimes y)$ and a unitary isomorphism $i:1_\cD\to F(1_\cC)$ such that the following identities hold for all $x,y,z\in\cC$:
\begin{gather*}
\mu_{x,y\otimes z}\circ(\id_{F(x)}\otimes\mu_{y,z})
= 
\mu_{x\otimes y, z}\circ (\mu_{x,y}\otimes \id_{F(z)})
\\
\mu_{1,x}\circ (i\otimes \id_{F(x)})
=\id_{F(x)}
\qquad\,\,\,
\mu_{x,1}\circ(\id_{F(x)}\otimes i)
=
\id_{F(x)}.
\end{gather*}
\end{defn}

We shall be interested in dagger tensor categories which are equipped with a second involution, this time at the level of objects (compare \cite[Def.\,1.3]{MR1749868}%
):

\begin{defn}
A \emph{bi-involutive tensor category} is a dagger tensor category $\cC$ with a covariant anti-linear dagger functor $\overline{\,\cdot\,}:\cC\to \cC$ called the conjugate.
This functor should be involutive, meaning that for every $x\in\cC$, we are given a unitary natural isomorphisms $\varphi_x:x\to \overline{\overline{x}}$ satisfying $\varphi_{\overline{x}}=\overline{\varphi_{x}}$.
It should be anti-compatible with the tensor structure, meaning that we have unitary natural isomorphisms
\[
\nu_{x,y}:\overline{x} \otimes \overline{y} \stackrel\simeq\longrightarrow \overline{y \otimes x}
\]
and a unitary $j:1\to\overline 1$ satisfying
$\nu_{x,z\otimes y}\circ (\id_{\overline x}\otimes\nu_{y,z})=\nu_{y\otimes x, z}\circ (\nu_{x,y}\otimes\id_{\overline z})$ and $\nu_{1,x}\circ(j\otimes \id_{\overline x})=\id_{\overline x}=\nu_{x,1}\circ(\id_{\overline x}\otimes j)$.
Finally, we require the compatibility conditions $\varphi_1=\overline j\circ j$ and $\varphi_{x \otimes y}=\overline{\nu_{y,x}}\circ\nu_{\overline x,\overline y}\circ(\varphi_x\otimes\varphi_y)$
between the above pieces of data.
\end{defn}

\begin{rem}
It is interesting to note that the map $j$ can be recovered from the other data as $j=
\lambda_{\overline 1}\circ(\varphi^{-1}_1\otimes \id_{\overline 1})\circ\nu_{\overline 11}^{-1}\circ\overline{\lambda_{\overline 1}}^{-1}\circ\varphi_1$. We believe that the notion of bi-involutive category as presented above is equivalent to its variant without $j$ (and without the axioms that involve $j$). Nevertheless, we find it more pleasant to include this piece of data in the definition.
  \end{rem}

Note that in the category of Hilbert spaces,
the isomorphism $\varphi_H:H\to\overline{\overline H}$ is an identity arrow.
Whenever that is the case, we have
$\overline{j}=j^{-1}$ and $\overline{\nu_{y,x}}=\nu_{\overline x,\overline y}^{-1}$.

\begin{defn} \label{def: bi-involutive category}
Let $\cC$ and $\cD$ be bi-involutive tensor categories.
A \emph{bi-involutive tensor functor} is a dagger tensor functor $F:\cC\to\cD$, equipped with a unitary natural transformation $\upsilon_x:F(\overline x)\to \overline{F(x)}$ satisfying the three conditions
$\upsilon_{\overline x}=
\overline{\upsilon_{x}}^{-1}\circ\varphi_{F(x)}\circ F(\varphi_x)^{-1}
$,\,
$\upsilon_{1_\cC}=
\overline i\circ j_\cD\circ i^{-1}\circ F(j_\cC)^{-1}$, and
$\upsilon_{x\otimes y}=
\overline{\mu_{x,y}}\circ\nu_{F(y),F(x)}\circ (\upsilon_y\otimes\upsilon_x)\circ\mu_{\overline y,\overline x}^{-1}\circ F(\nu_{y,x})^{-1}$.
\end{defn}

\subsection{Unitary fusion categories}
\label{sec: Unitary fusion categories}

A tensor category $\cC$ is \emph{rigid} if for every object $x\in \cC$ there exists an object $x^\vee\in\cC$, called the dual of $x$, and maps
$\mathrm{ev}_x : x^\vee \otimes x \to 1$
and $\mathrm{coev}_x : 1 \to x \otimes x^\vee$
satisfying the zig-zag axioms
\begin{equation}\label{eq: zig-zag axioms}
(\id_x \otimes\, \mathrm{ev}_x) \circ (\mathrm{coev}_x \otimes \id_x) = \id_x
\quad\mathrm{and}\quad
(\mathrm{ev}_x \otimes \id_{x^\vee}) \circ (\id_{x^\vee} \otimes\, \mathrm{coev}_x) = \id_{x^\vee}
\end{equation}
(those equations determine $x^\vee$ up to unique isomorphism).
Moreover, for every $x \in \cC$, there should exist an object ${}^\vee x \in \cC$ such that $({}^\vee x)^\vee \cong x$.
The dual of a morphism
$f:x\to y$ is given by
\[
f^\vee:=(\mathrm{ev}_y\otimes\id_{x^\vee})\circ(\id_{y^\vee}\otimes f \otimes\id_{x^\vee})\circ(\id_{y^\vee}\otimes\,\mathrm{coev}_x):y^\vee\to x^\vee.
\]

Let $\mathsf{Vec}$ denote the category of finite dimensional vector spaces.
A category is \emph{semisimple} if it is equivalent to a direct sum of copies of $\mathsf{Vec}$, possibly infinitely many.
Equivalently, it is semisimple if it admits finite direct sums (including the zero sum), and every object is a direct sum of finitely many (possibly zero) simple objects.

\begin{defn}
A fusion category is a tensor category which is  rigid, semisimple, with simple unit, and finitely many isomorphism classes of simple objects.
\end{defn}

Let $\mathsf{Hilb}$ denote the dagger category of Hilbert spaces and bounded linear maps.
A C*-category is a dagger category $\cC$ for which there exists a faithful dagger functor $\cC\to \mathsf{Hilb}$ whose image is norm-closed at the level of hom-spaces.
Equivalently \cite[Prop.\,1.14]{MR808930}, a C*-category is a dagger category such that for every arrow $f:x\to y$ there exists an arrow $g:x\to x$ with $f^*\circ f=g^*\circ g$,\footnote{This condition is present in the original definition
\cite{MR808930} 
of Ghez, Lima, and Roberts, but is omitted from many other references (e.g. from
\cite{MR2091457} 
\cite{MR1749868} 
\cite{MR1010160}
). It is automatic for categories that admit direct sums, but it can otherwise fail.
}
and such that
\[
\,\|f\|:=\sup \big\{|\lambda| : f^*{\circ} f-\lambda{\cdot} \id\,\,\, \mathrm{is\,\,not\,\,invertible}\big\}^{1/2}
\]
are complete norms on the hom-spaces which satisfy $\|f\circ g\|\le\|f\|\|g\|$
and $\|f^*\circ f\|=\|f\|^2$.
A C*-tensor category is a dagger tensor category 
whose underlying dagger category is a C*-category.

\begin{defn}
A unitary fusion category is a dagger tensor category whose underlying dagger category is a C*-category, and whose underlying tensor category is a fusion category.
\end{defn}

By \cite[Thm.\,4.7]{MR2091457} 
and \cite[\S4]{MR3342166}
, every rigid C*-tensor category with simple unit (in particular, every unitary fusion category) can be equipped with a \emph{canonical bi-involutive structure}.
The conjugation $\overline{\,\cdot\,}$ is characterized at the level of objects (up to unique unitary isomorphisms) by the data of structure morphisms
$\mathrm{ev}_x : \overline{x} \otimes x \to 1$
and $\mathrm{coev}_x : 1 \to x \otimes \overline{x}$,
subject to the two zig-zag axioms (\ref{eq: zig-zag axioms}) and the balancing condition
\begin{equation*}
\qquad\quad\mathrm{coev}_x^*\circ(f\otimes \id_{\overline{x}})\circ\mathrm{coev}_x=\mathrm{ev}_x\circ(\id_{\overline{x}}\otimes f)\circ\mathrm{ev}_x^*
\qquad\quad
\forall\,\,f:x\to x.
\end{equation*}
The conjugation applied to a morphism $f:x\to y$
is given by $\overline{f}:=(f^*)^\vee:\overline{x}\to\overline{y}$.
The coherences between the conjugation and the tensor structure are given by
$j=\mathrm{coev}_1$ and
$\nu_{x,y}=(\mathrm{ev}_x\otimes\id_{\overline{y\otimes x}})\circ(\id_{\overline{x}}\otimes\mathrm{ev}_y\otimes\id_{x\otimes\overline{y\otimes x}})\circ(\id_{\overline x\otimes\overline y}\otimes \mathrm{coev}_{y\otimes x})$.
The last piece of data is provided by the isomorphisms
\[
\varphi_x:=(\id_{\overline{\overline x}}\,\otimes\, \mathrm{ev}_x)\circ
(\mathrm{ev}_{\overline x}^*\otimes \id_x):x\to \overline{\overline{x}}.
\]
Finally, the maps $\varphi_x:x\to \overline{\overline{x}}$ equip such a category with a canonical pivotal structure, which is furthermore spherical.

Note that a unitary fusion category is a fusion category with an additional structure. A fusion category could therefore, in principle, have more than one unitary structures. The question of uniqueness is best formulated in the following way
(see \cite[\S5]{MR3021796} 
for related work):
\begin{quest}
Let $F:\cC\stackrel\simeq\to \cD$ be a tensor equivalence between two unitary fusion categories. Is any such $F$ naturally equivalent to a dagger tensor functor?
\end{quest}

Given a fusion category $\cC$, we define a new category $\cC\otimes_\Vec \Hilb$ as follows.
Its objects are formal expressions
$\bigoplus_i x_i\otimes H_i$ (finite direct sums) with $x_i\in\cC$ and $H_i\in\Hilb$, and the morphisms are given by
\[
\Hom_{\,\cC\otimes_\Vec \Hilb}\Big(\bigoplus_i x_i\otimes H_i,
\bigoplus_j y_j\otimes K_j\Big)\,:=\,\,
\bigoplus_{i,j} \cC(x_i,y_j)\otimes_\C \Hilb(H_i,K_j).
\]
As we saw, if $\cC$ is a unitary fusion category then it is equipped with a canonical bi-involutive structure.
Combining it with the corresponding structure on $\Hilb$ yields a bi-involutive structure on $\cC\otimes_\Vec \Hilb$.
The adjoint of a morphism $\sum f_{ij}\otimes g_{ij}:\bigoplus x_i\otimes H_i\to \bigoplus y_j\otimes K_j$ is $\sum f_{ij}^*\otimes g_{ij}^*$, and the conjugate of an object $\bigoplus x_i\otimes H_i$ is $\bigoplus \overline{x_i}\otimes \overline{H_i}$.
The structure data $\varphi$, $\nu$, $j$ are inherited from those of $\cC$ and of $\Hilb$.

\subsection{The commutant of a category}

Given an algebra $B$ and a subalgebra $A\subset B$, the commutant of $A$ inside $B$, also called the centralizer, is the algebra
\[
Z_B(A):=\{b\in B\,|\,ab=ba\,\, \forall a\in A\}.
\]
In this section, we introduce higher categorical variants of the above notion, where the algebras $A$ and $B$ are replaced by tensor categories, dagger tensor categories, and finally bi-involutive tensor categories.

\begin{defn}[\cite{MR1151906}]
Let $\cC$ and $\cD$ be tensor categories, and
let $F=(F,\mu,i): \cC \to \cD$ be a tensor functor.
The commutant $\cZ_\cD(\cC)$ of $\cC$ in $\cD$ is the category
whose objects are pairs $(X, e_X)$ with $X\in\cD$ an object, and $e_X=(e_{X,y}:X\otimes F(y)\stackrel{\scriptscriptstyle \simeq}\to F(y)\otimes X)_{y\in \cC}$ a half-braiding.
The components $e_{X,y}$ of the half-braiding must satisfy the following `hexagon' axiom:
\[
\xymatrix@C=.1cm{
&& 
F(y) \otimes X \otimes F(z)\ar[rrd]^(.55){\id_{F(y)}\otimes e_{X,z}}
\\
\ar[rru]^(.45){e_{X,y}\otimes \id_{F(z)}}X\otimes F(y)\otimes F(z)\ar[rd]^(.6){\id_X\otimes \mu_{y,z}}
&&
&& 
F(y) \otimes F(z) \otimes X
\\
&X\otimes F(y\otimes z)\ar[rr]^{e_{X,y\otimes z}} 
&&
F(y\otimes z)\otimes X \ar[ru]^(.4){\mu_{y,z}^{-1}\otimes \id_X} 
& 
}
\]
Note that by setting $y=z=1_\cC$ in the above diagram, it follows that $e_{X,1_\cC}=\id_X$.

A morphism $(X,e_X)\to (Y, e_Y)$ in $\cZ_\cD(\cC)$ is a morphism $f:X\to Y$ in $\cD$ such that $(\id_{F(z)}\otimes f)\circ e_{X,z} = e_{Y,z}\circ(f\otimes \id_{F(y)})$.
The tensor product of two objects $(X,e_X)$, $(Y, e_Y)$ of $\cZ_\cD(\cC)$ is given by $(X,e_X)\otimes (Y, e_Y) = (X\otimes Y,e_{X\otimes Y})$, with
\[
e_{X\otimes Y,z}=(e_{X,z}\otimes \id_Y)\circ (\id_X\otimes e_{Y,z}),
\]
and the associators and unitors of $\cZ_\cD(\cC)$ are inherited from those of $\cD$.
\end{defn}

\begin{rem}
The Drinfeld center $\cZ(\cC)$ is the commutant of $\cC$ in itself.
\end{rem}

If $\cC$ and $\cD$ are dagger tensor categories and $F: \cC \to \cD$ is a dagger tensor functor, then we may consider the full subcategory
\[
\cZ_\cD^*(\cC)\,\subset\, \cZ_\cD(\cC)
\]
whose objects are pairs $(X, e_X)$ as above, where the maps $e_{X,y}: X \otimes F(y)\to F(y)\otimes X$ are unitary.
We call $\cZ_\cD^*(\cC)$ the \emph{unitary commutant} of $\cC$ in $\cD$ (compare \cite[Def.\,6.1]{MR1966524}).
Unlike $\cZ_\cD(\cC)$, the unitary commutant is a dagger category, and its $*$-operation is inherited from $\cD$.
\begin{rem}
The inclusion $\cZ_\cD^*(\cC)\hookrightarrow \cZ_\cD(\cC)$ is in general not an equivalence.
The easiest counterexample is given by $\cC=\mathsf{Vec}[G]$ for $G$ some infinite group, and $\cD=\mathsf{Vec}$. Then $\cZ^*_\cD(\cC)$ is the category of unitary representations of $G$, whereas $\cZ_\cD(\cC)$ is the category of all representations of $G$.
See \cite[Thm.\,6.4]{MR1966525} 
and \cite[Proposition 5.24]{MR3021796}
for some positive results when $\cC$ is a fusion category.
\end{rem}

If $\cC$ and $\cD$ are bi-involutive tensor categories, and $F: \cC \to \cD$ is a bi-involutive tensor functor,
then the unitary commutant $\cZ^*_\cD(\cC)$ of $\cC$ in $\cD$ is also naturally equipped with the structure of a bi-involutive tensor category.
The conjugate of $(X,e_X)\in \cZ^*_\cD(\cC)$ is the pair $(\overline X,e_{\overline X})$ consisting of the object $\overline X\in\cD$ and the half-braiding
\[
e_{\overline X,y}:
\xymatrix@C=1.2cm{
\overline X \otimes y
\ar[r]^{\id \otimes \varphi_y} &
\overline X \otimes\overline{\overline{y}}
\ar[r]^{\nu_{X,\overline{y}}} &
\overline{\overline{y} \otimes X}
\ar[r]^{\overline{e_{X,\overline{y}}}^{-1}} &
\overline {X \otimes \overline{y}}
\ar[r]^{\nu_{\overline y,X}^{-1}} &
\overline{\overline{y}}\otimes \overline X
\ar[r]^{\varphi_y^{-1}\otimes \id} &
y\otimes \overline X.
}
\]
The coherence isomorphisms $\varphi$, $j$ and $\nu$ are inherited from $\cD$.

We will be especially interested in the case when $\cD=\Bim(R)$, the tensor category of bimodules over some hyperfinite von Neumann factor $R$.
The monoidal product on that category is based on the operation of Connes fusion, which we describe next.

\subsection{\texorpdfstring{$L^2$}{L2}-spaces and Connes fusion}\label{sec:Connes fusion}

Let $R$ be a von Neumann algebra, with predual $R_*$ and positive part $R_*^+\subset R_*$.
The $L^2$-space of $R$ (also known as standard form of $R$), denoted $L^2R$, is the Hilbert space generated by symbols $\sqrt\phi$ for $\phi\in R_*^+$, under the inner product
\[
\langle\sqrt\phi,\sqrt\psi\rangle = \underset{t\to i/2}{\mathrm{anal.\,cont.}}\,
\phi([D\phi:D\psi]_t),
\vspace{-.1cm}
\]
where $[D\phi:D\psi]_t\in R$ is Connes' non-commutative Radon-Nikodym derivative.\footnote{The formula for the inner product makes most sense if one rewrites formally $[D\phi:D\psi]_t$ as $\phi^{it}\psi^{-it}$ and $\phi(a)$ as $\mathrm{Tr}(\phi  a)$. It then simplifies to $\mathrm{Tr}(\phi^{1+it}\psi^{-it})|_{t=i/2}=\mathrm{Tr}(\phi^{1/2}\psi^{1/2})$.
Similarly, for next formula, one may replace formally $\sigma_t^\psi(b)$ by $\psi^{it}b\psi^{-it}$. Note that these formal symbols are genuinely meaningful, and can be implemented as (unbounded) operators on some Hilbert space, see e.g. \cite{MR1203761}.}
The Hilbert space $L^2R$ is an $R$-$R$-bimodule, with the two actions of $R$ are determined by the formula
\[
\langle a\sqrt\phi\, b,\sqrt\psi\rangle = \underset{t\to i/2}{\mathrm{anal.\,cont.}}\,
\phi\big([D\phi:D\psi]_t \sigma_t^\psi(b) a\big),
\vspace{-.1cm}
\]
where $\sigma_t^\psi$ is the modular flow.
Finally, the modular conjugation $J:L^2R\to L^2R$ is given by $J(\lambda\sqrt \phi)=\overline\lambda\sqrt \phi$ for $\lambda\in \bbC$.
General references about $L^2R$ include \cite{MR0407615, 
MR560633, 
MR2630616}. 

Given a right module $H$ and a left module $K$, their fusion $H\boxtimes_R K$ is the Hilbert space generated by symbols $\alpha[\xi]\beta$, for $\alpha:L^2R\to H$ a right $R$-linear map, $\xi\in L^2R$, and $\beta:L^2R\to K$ a left $R$-linear map, under the inner product
\[
\big\langle \alpha_1[\xi_1]\beta_1, \alpha_2[\xi_2]\beta_2\big\rangle \,=\, \big\langle\ell^{-1}(\alpha_2^*\circ\alpha_1)\xi_1 r^{-1}(\beta_2^*\circ\beta_1),\xi_2\big\rangle_{L^2R}.
\]
Here, $\ell$ and $r$ denote the left and right actions of $R$ on its $L^2$ space, defined by $\ell(a)(\xi)=a\xi$ and $r(a)(\xi)=\xi a$, respectively.

There exist two alternative descriptions of $H\boxtimes_R K$, as generated by symbols $\alpha[\xi$ for $\alpha:L^2R\to H$ a right $R$-linear map and $\xi\in K$ a vector, and generated by 
symbols $\xi]\beta$ for $\beta:L^2R\to K$ a left $R$-linear map and $\xi\in H$ a vector.
The isomorphisms between the above models are given by
\[
\alpha[\xi]\beta\,\,\mapsto\,\,\alpha(\xi)]\beta
\qquad\mathrm{and}\qquad
\alpha[\xi]\beta\,\,\mapsto\,\,\alpha[\beta(\xi).
\]
General references about Connes fusion include 
\cite{MR703809, 
correspondences} 
and
\cite[Appendix B.$\delta$]{MR1303779}. 

The two actions of $R$ on $L^2R$ are each other's commutants.
That property characterizes the bimodules which are invertible with respect to Connes fusion:

\begin{lem}[{\cite[Prop.\,3.1]{MR703809}}]
\label{lem: invertible bimodules}
Let $A$ and $B$ be von Neumann algebras, and let $H$ be an $A$-$B$-bimodules such that $A$ and $B$ are each other's commutants on $H$ (in particular they act faithfully on $H$).
Then $H$ is an invertible $A$-$B$-bimodule.
\end{lem}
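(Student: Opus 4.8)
The plan is to produce an explicit inverse for $H$ and to verify invertibility from one side, exploiting the symmetry of the hypothesis to obtain the other. Since $A$ and $B$ are each other's commutants on $H$, the natural candidate for the inverse is the conjugate $B$-$A$-bimodule $\overline H$. The hypothesis is symmetric under simultaneously exchanging $A\leftrightarrow B$ and $H\leftrightarrow \overline H$ (the commutant of $A$ acting on $\overline H$ is $B$, and vice versa), so it suffices to construct a unitary isomorphism of $A$-$A$-bimodules $H\boxtimes_B \overline H\cong L^2A$; the companion isomorphism $\overline H\boxtimes_A H\cong L^2B$ then follows by relabeling.

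First I would introduce the space $\mathcal I$ of bounded right-$B$-linear maps $\alpha:L^2B\to H$, i.e. the $B$-bounded vectors of $H$, which is precisely the data entering the Connes fusion model described above. The crucial consequence of the commutant hypothesis is a bookkeeping fact about compositions: for $\alpha,\beta\in\mathcal I$ the operator $\alpha\circ\beta^*:H\to H$ is right-$B$-linear, hence lies in $B'=A$, while $\beta^*\circ\alpha:L^2B\to L^2B$ is right-$B$-linear, hence lies in $\ell(B)\cong B$. Thus $\mathcal I$ carries commuting left-$A$ and right-$B$ actions and serves as an algebraic core through which the relative tensor product $H\boxtimes_B\overline H$ can be computed.

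Next I would define the comparison map $\Phi:H\boxtimes_B\overline H\to L^2A$. On generators built from bounded intertwiners, $\Phi$ sends $\alpha[\,\cdot\,]\overline\beta$ to the element of $L^2A$ determined by the operator $\alpha\circ\beta^*\in A$ together with Connes' spatial derivative relating the chosen weights on $A$ and $B$ through $H$; concretely this is the $L^2A$-valued inner product of the associated $A$-bounded vectors. A direct computation, matching the defining inner product of $H\boxtimes_B\overline H$ against the inner product on $L^2A$, should show that $\Phi$ is isometric and intertwines the two left- and right-$A$-actions.

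The main obstacle is surjectivity of $\Phi$, equivalently showing that $\Phi$ is unitary rather than merely isometric. This is exactly where the full strength of the hypothesis $A=B'$ is needed. Since bounded vectors are always dense in a right $B$-module, the operators $\alpha\circ\beta^*$ generate the von Neumann algebra $B'$ acting on $H$, and the commutant condition identifies this algebra with all of $A$; from this I would deduce that the images of the $\alpha\circ\beta^*$ exhaust a dense subspace of $L^2A$, so that no proper $A$-$A$-invariant subspace is missed. Once $\Phi$ is shown to be a surjective isometry of $A$-$A$-bimodules, invertibility of $H$ follows with $\overline H$ as its inverse.
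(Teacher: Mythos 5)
The paper does not actually prove this lemma: it is imported wholesale from the cited reference (Prop.~3.1 of \cite{MR703809}), so there is no internal argument to compare yours against. Your sketch is, in outline, the standard proof of that external fact, namely the identification $H\boxtimes_B\overline H\cong L^2(B')$ applied with $B'=A$, plus the symmetric statement for $\overline H\boxtimes_A H$, and the ingredients you name are the right ones: the $B$-bounded vectors $\alpha:L^2B\to H$, the bookkeeping $\alpha\circ\beta^*\in B'=A$ and $\beta^*\circ\alpha\in\ell(B)$, and density of the span of these ``finite-rank'' operators in $B'$. Two steps deserve more care than you give them. First, an operator $\alpha\circ\beta^*\in A$ does not by itself determine a vector of $L^2A$; the map $\Phi$ has to be defined weight-independently (via the spatial derivative, or equivalently as the $L^2A$-valued pairing of $A$-bounded vectors, as you gesture at), and verifying that the defining inner product of $H\boxtimes_B\overline H$ matches the one on $L^2A$ is exactly the substantive content of the proof rather than a routine check. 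Second, for surjectivity, weak density in $A$ of the ideal generated by the $\alpha\circ\beta^*$ yields density of its image in $L^2A$ only after a Kaplansky-type bounded approximation argument. Both gaps are fillable by standard techniques, so the proposal is an acceptable reduction of the lemma to known computations, but as written it is a roadmap to the cited proof rather than a self-contained replacement for the citation.
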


Connes fusion has the following useful \emph{faithfulness} property:

\begin{lem} 
\label{lem: fusion is faithful}
Let $R$ be a von Neumann algebra and let $H$ be a faithful right module.
Then for any left modules $K_1$ and $K_2$, the map
\begin{equation}\label{eq: faithful (in lemma)}
H\boxtimes_R -:
\Hom_R(K_1,K_2)\to\Hom(H\boxtimes_R K_1,H\boxtimes_R K_2)
\end{equation}
is injective.
\end{lem}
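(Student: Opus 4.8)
The plan is to unwind the definition of $H\boxtimes_R K$ in the model generated by symbols $\alpha[\xi$ with $\alpha\colon L^2R\to H$ right $R$-linear and $\xi\in K$, and to reduce the statement to a single positivity computation governed by support projections in $R$. First I would record two standard facts about the standard form. Fix a cyclic and separating vector $\Omega\in L^2R$ for the left action $\ell(R)$. Then every right $R$-linear map $\alpha\colon L^2R\to H$ is determined by the \emph{right-bounded} vector $h:=\alpha(\Omega)$, since $\alpha(r(a)\Omega)=h\cdot a$ on the dense subspace $r(R)\Omega$; conversely, right-bounded vectors are dense in $H$. Hence the symbols $\alpha[\xi$ range over the same set whether or not we restrict $\alpha$ to the maps coming from right-bounded $h$, and they generate $H\boxtimes_R K$. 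Because $(\id_H\boxtimes f)(\alpha[\xi)=\alpha[f\xi$, the vanishing $\id_H\boxtimes f=0$ is equivalent to $\alpha[f\xi=0$ for all right $R$-linear $\alpha$ and all $\xi\in K_1$. It therefore suffices to prove: for a left module $K$ and a vector $\eta\in K$, if $\alpha[\eta=0$ for every right $R$-linear $\alpha$, then $\eta=0$; one then applies this to $\eta=f\xi$ for each $\xi$.

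Next I would compute $\|\alpha[\eta\|^2=\langle d_\alpha\,\eta,\eta\rangle_K$, where $d_\alpha:=\ell^{-1}(\alpha^*\alpha)$ is a positive element of $R$ acting on $\eta$ through the left-module structure of $K$. Writing $s(x)$ for the support projection of $x\in R$ and using that the $R$-action on $K$ is normal, the condition $\langle d_\alpha\eta,\eta\rangle=0$ is equivalent to $s(d_\alpha)\eta=0$. Thus $\eta$ lies in $\ker\big(s(d_\alpha)\big|_K\big)$ for every $\alpha$, and normality gives $\eta\in\ker\big((\bigvee_\alpha s(d_\alpha))\big|_K\big)$. The whole lemma is reduced to the identity $\bigvee_\alpha s(d_\alpha)=1_R$.

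This last identity is where faithfulness is used, and it is the main obstacle. The key point is to identify $s(d_\alpha)$ with the support of the normal positive functional $\omega_h(a):=\langle h\cdot a,\,h\rangle_H$ attached to the right-bounded vector $h=\alpha(\Omega)$. A short manipulation rewrites $\omega_h(a)=\langle r(a)\xi,\xi\rangle$ with $\xi=\ell(d_\alpha^{1/2})\Omega$, and, using $J\ell(R)J=r(R)$ together with $J\Omega=\Omega$, one checks that the support of $\omega_h$ equals $s(d_\alpha)$; this is transparent in the tracial case, where $\omega_h(a)=\tau(d_\alpha a)$. Since right-bounded vectors are dense in $H$ and $R$ acts faithfully on the right, no nonzero positive element of $R$ is annihilated by all $\omega_h$, so $\bigvee_h s(\omega_h)=1_R$; equivalently $\bigvee_\alpha s(d_\alpha)=1_R$, whence $\eta=0$. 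The delicate part is the non-tracial bookkeeping in the equality $s(\omega_h)=s(d_\alpha)$, which must be carried out with the modular structure of $L^2R$.

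Finally, I would note an alternative route to $\bigvee_\alpha s(d_\alpha)=1$ that sidesteps the functional computation: the family of right $R$-linear maps is stable under $\alpha\mapsto\alpha\circ\ell(v)$ for $v\in R$, and since $\bigvee_{v}s(v^{*}dv)$ is the central support of $s(d)$, the projection $q:=\bigvee_\alpha s(d_\alpha)$ is automatically central. If $q\neq 1$ then $p:=1-q$ is a nonzero central projection with $\alpha\circ\ell(p)=0$ for all $\alpha$; evaluating at $\Omega$ and using $\ell(p)\Omega=r(p)\Omega$ for central $p$ shows $h\cdot p=0$ for all right-bounded $h$, so $p$ acts as $0$ from the right, contradicting faithfulness. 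Either argument closes the proof.
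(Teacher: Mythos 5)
Your proof is correct, but it takes a genuinely different route from the paper's. The paper's argument is structural and very short: letting $R'$ be the commutant of the (faithful) right $R$-action on $H$, the bimodule ${}_{R'}H_R$ has the two algebras as mutual commutants, hence is invertible by Lemma~\ref{lem: invertible bimodules}; the map $H\boxtimes_R-$ then factors as the bijection $\Hom_R(K_1,K_2)\cong\Hom_{R'}(H\boxtimes K_1,H\boxtimes K_2)$ followed by the inclusion into all bounded maps, so injectivity is immediate. You instead unwind the inner product on the fusion, reduce to showing that $\alpha[\eta=0$ for all right $R$-linear $\alpha$ forces $\eta=0$, and prove $\bigvee_\alpha s(d_\alpha)=1$ with $d_\alpha=\ell^{-1}(\alpha^*\alpha)$. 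What your approach buys is self-containedness: the paper leans on the nontrivial classification of invertible bimodules behind Lemma~\ref{lem: invertible bimodules}, whereas your argument uses only the definition of Connes fusion plus standard facts about support projections; your second route to $\bigvee_\alpha s(d_\alpha)=1$ (centrality of $q$ via the substitution $\alpha\mapsto\alpha\circ\ell(v)$, then $h\cdot p=0$ for all bounded vectors $h$) is complete and cleanly sidesteps the identification $s(\omega_h)=s(d_\alpha)$ whose modular bookkeeping you rightly flag as delicate. Two minor points to make explicit: fixing a single cyclic and separating vector $\Omega\in L^2R$ implicitly assumes $R$ is $\sigma$-finite (harmless for the paper's applications, but for general $R$ the density of right-bounded vectors should be phrased relative to a weight), and normality of the $R$-action on $K_2$ is what licenses the passage from $\langle d_\alpha\eta,\eta\rangle=0$ for every $\alpha$ to annihilation of $\eta$ by the supremum of the support projections.
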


\begin{proof}
Let $R'$ be the commutant of $R$ on $H$.
By Lemma \ref{lem: invertible bimodules}, $H$ is an invertible $R'$-$R$-bimodule.
The map (\ref{eq: faithful (in lemma)}) can then be factored as the composite of the bijection
$\Hom_R(K_1,K_2)\cong\Hom_{R'}(H\boxtimes K_1,H\boxtimes K_2)$
with the inclusion
\[
\Hom_{R'}(H\boxtimes K_1,H\boxtimes K_2)\subset\Hom(H\boxtimes K_1,H\boxtimes K_2).\qedhere
\]
\end{proof}

The operation of fusion makes the category $\Bim(R)$ of $R$-$R$-bimodules\footnote{Later on, we will restrict attention to separable von Neumann algebras (i.e., ones which admit faithful actions on separable Hilbert spaces), in which case we will take $\Bim(R)$ to be the category of $R$-$R$-bimodules whose underlying Hilbert space is separable. 
The reason for that restriction will become evident in Section~\ref{sec:Absorbing objects}.} into a tensor category, with unit object $L^2R$.
The associator is given by
\[
(H\boxtimes_R K) \boxtimes_R L \to H\boxtimes_R (K \boxtimes_R L)\,\,:\,\, (\alpha[\xi)]\beta \mapsto \alpha[(\xi]\beta),
\]
for $\alpha:L^2R\to H$ a right $R$-linear map, $\xi \in K$, and $\beta:L^2R\to L$ a left $R$-linear map,
and the two unitors are given by
\begin{equation*}
H\boxtimes_R L^2R\to H: \alpha[\xi \mapsto \alpha(\xi)
\quad\,\,\,\mathrm{and}\,\,\,\quad L^2R\boxtimes_R H\to H: \alpha[\xi \mapsto \ell^{-1}(\alpha)\xi.
\end{equation*}

The category $\Bim(R)$ is a dagger tensor category, with adjoints of morphisms defined at the level of the underlying Hilbert spaces.
It is even a bi-involutive tensor category.
Given a bimodule $H\in \Bim(R)$, the underlying Hilbert space of $\overline H$ is the complex conjugate of $H$ (with scalar multiplication $\lambda \overline{\xi}=\overline{\overline{\lambda}\xi}$), and the two actions of $R$ are given by $a\overline{\xi}b=\overline{b^*\xi a^*}$.
The transformation $\varphi$ is the identity.
The map $j:L^2R\to\overline{L^2R}$ is given by $j(\xi)=\overline{J(\xi)}$, with $J$ the modular conjugation (note that $j$ is linear, and $J$ is anti-linear), and the coherence
$\nu :\overline{H} \boxtimes_R \overline{K} \to \overline{K \boxtimes_R H}$ is given by
\[
\nu(\alpha[\xi]\beta)=\overline{(\overline\beta\circ j)[J(\xi)](\overline\alpha\circ j)}
\]
for $\alpha:L^2R\to \overline H$, $\xi\in L^2R$, and $\beta:L^2R\to \overline K$.
The latter is equivalently given by $\nu(\alpha[\xi)=\overline{J(\xi)](\overline\alpha\circ j)}$, or
$\nu(\xi]\beta)=\overline{(\overline\beta\circ j)[J(\xi)}$.

\begin{rem}
Let $\Bim^\circ(R)\subset \Bim(R)$ be the full subcategory of dualizable bimodules (equivalently, the bimodules with finite statistical dimension \cite[\S5 and Cor.\,7.14]{MR3342166})%
.
Then by \cite[Cor.\,6.12]{MR3342166}
, the canonical conjugation on $\Bim^\circ(R)$ (described in Section~\ref{sec: Unitary fusion categories}) is the restriction of
the conjugation on $\Bim(R)$ described above.
\end{rem}

\subsection{Graphical calculus}

Throughout this paper, we will use the string diagram calculus familiar from tensor categories: objects are denoted by strands and morphisms are denoted by coupons \cite{MR1113284}
, \cite{MR2767048}
. For example, the following string diagram
\[
\qquad
\tikzmath{
\draw (.25,0) to[out=80, in=-100] (.25,1.8) node[scale=.7, left, yshift=-4]{$x$};
\draw (.25,-1.2) node[scale=.7, left, yshift=5]{$v$} to[out=90, in=-100] (.3,0);
\draw (.45,0) to[out=80, in=-100, looseness=1.2] (1.9,1);
\draw (2,1) to[out=80, in=-90] (2,1.8) node[scale=.7, left, yshift=-4]{$y$};
\draw (4,.5) to[out=100, in=-95] (4,1.8) node[scale=.7, left, yshift=-4]{$z$};
\draw (2.5,-.5) to[out=80, in=-95, looseness=1.2] (2.1,1);
\draw (4,-1.2) node[scale=.7, left, yshift=5]{$w$} to[out=90, in=-100] (4,.5);
\node[scale=.7] at (1,.73) {$t$};
\node[scale=.7] at (2.05,.23) {$u$};
\filldraw[fill=white, very thick, rounded corners=5pt]
(.26,0) node{$f$} +(-.6,-.3) rectangle +(.6,.3)
(2,1) node{$g$} +(-.6,-.3) rectangle +(.6,.3)
(2.5,-.5) node{$h$} +(-.6,-.3) rectangle +(.6,.3)
(4,.5) node{$k$} +(-.6,-.3) rectangle +(.6,.3);
\pgftransformxshift{10}
\pgftransformyshift{10}
\node[scale=.8, right] at (4.5,-.2) {$f:v\to x\otimes t$};
\node[scale=.8, right] at (4.5,-.6) {$g: t\otimes u \to y$};
\node[scale=.8, right] at (4.5,-1.0) {$h:1\to u$};
\node[scale=.8, right] at (4.5,-1.4) {$k:w\to z$};
}
\]
represents a morphism $v\otimes w\to x\otimes y\otimes z$.

Given a dualizable object $x\in \cC$ in a C*-tensor category, the canonical evaluation and coevaluations maps 
$\mathrm{ev}_x : \overline{x} \otimes x \to 1$ and $\mathrm{coev}_x : 1 \to x \otimes \overline{x}$,
and their adjoints 
$\mathrm{ev}_x^* : 1\to \overline{x} \otimes x$ and $\mathrm{coev}_x^* : x \otimes \overline{x}\to 1$
are denoted graphically as follows:
\[
\mathrm{ev}_x:
\begin{tikzpicture}[baseline=.4cm]
\draw (0,.3) -- (0,.6) arc (180:0:.3cm) -- (.6,.3);
\node at (0,.1) {\scriptsize{$\overline x$}};
\node at (.6,.08) {\scriptsize{$x$}};
\end{tikzpicture}
\qquad
\mathrm{coev}_x:
\begin{tikzpicture}[baseline=-.55cm]
\draw (0,-.3) -- (0,-.6) arc (-180:0:.3cm) -- (.6,-.3);
\node at (0,-.12) {\scriptsize{$x$}};
\node at (.6,-.1) {\scriptsize{$\overline x$}};
\end{tikzpicture}
\qquad
\mathrm{ev}_x^*:
\begin{tikzpicture}[baseline=-.55cm]
\draw (0,-.3) -- (0,-.6) arc (-180:0:.3cm) -- (.6,-.3);
\node at (0,-.1) {\scriptsize{$\overline x$}};
\node at (.6,-.12) {\scriptsize{$x$}};
\end{tikzpicture}
\qquad
\mathrm{coev}_x^*:
\begin{tikzpicture}[baseline=.4cm]
\draw (0,.3) -- (0,.6) arc (180:0:.3cm) -- (.6,.3);
\node at (0,.08) {\scriptsize{$x$}};
\node at (.6,.1) {\scriptsize{$\overline x$}};
\end{tikzpicture}.
\]%
They satisfy:
\[
\begin{tikzpicture}[baseline=-.65cm]
\draw (0,0) -- (0,-.6) arc (-180:0:.3cm) arc (180:0:.3cm) -- (1.2,-1.2);
\node at (-.18,-.02) {\scriptsize{$x$}};
\node at (1.02,-1.15) {\scriptsize{$x$}};
\node at (.44,-.5) {\scriptsize{$\overline x$}};
\end{tikzpicture}
\,\,\,=\!\!
\begin{tikzpicture}[baseline=-.65cm]
\draw (0,0) -- (0,-1.2);
\node at (-.18,-1.15) {\scriptsize{$x$}};
\end{tikzpicture}
\,\,,\quad
\begin{tikzpicture}[baseline=.55cm]
\draw (0,0) -- (0,.6) arc (180:0:.3cm) arc (-180:0:.3cm) -- (1.2,1.2);
\node at (-.2,.05) {\scriptsize{$\overline x$}};
\node at (1.01,1.15) {\scriptsize{$\overline x$}};
\node at (.44,.55) {\scriptsize{$x$}};
\end{tikzpicture}
\,\,\,=\!\!
\begin{tikzpicture}[baseline=-.65cm]
\draw (0,0) -- (0,-1.2);
\node at (-.18,-1.15) {\scriptsize{$\overline x$}};
\end{tikzpicture}
\,\,,\qquad
\begin{tikzpicture}[baseline=-.1cm]
	\draw (0,.3) -- (0,.48) arc (180:0:.3cm) -- (.6,-.48) arc (0:-180:.3cm)-- (0,-.3);
	\roundNbox{unshaded}{(0,0)}{.3}{0}{0}{$f$};
	\node at (-.18,-.53) {\scriptsize{$x$}};
	\node at (.8,-.5) {\scriptsize{$\overline x$}};
\end{tikzpicture}
=
\begin{tikzpicture}[baseline=-.1cm, xscale=-1]
	\draw (0,.3) -- (0,.48) arc (180:0:.3cm) -- (.6,-.48) arc (0:-180:.3cm)-- (0,-.3);
	\roundNbox{unshaded}{(0,0)}{.3}{0}{0}{$f$};
	\node at (-.18,-.53) {\scriptsize{$x$}};
	\node at (.8,-.5) {\scriptsize{$\overline x$}};
\end{tikzpicture}
\,\,\,\forall\,f:x\to x\,,
\]
along with the equations
$\overline{\mathrm{ev}_x}=j\circ\mathrm{ev}_x
\circ(\id_{\overline x}\otimes\varphi_x^{-1})\circ
\nu_{x,\overline x}^{-1}$
and
$\overline{\mathrm{coev}_x}=\nu_{\overline x,x}\circ(\varphi_x\otimes\id_{\overline x})\circ\mathrm{coev}_x\circ j^{-1}$ which,
after omitting the coherences $j$, $\nu$, and $\varphi$, can be conveniently abbreviated
\[
\overline{\mathrm{ev}_x}=\mathrm{ev}_x
\qquad\qquad\text{and}\qquad\qquad
\overline{\mathrm{coev}_x}=\mathrm{coev}_x.
\]

The \emph{dimension} of a dualizable object $x\in \cC$ is given by
\[
d_x:=\mathrm{coev}_x^*\circ\mathrm{coev}_x=\mathrm{ev}_x\circ\mathrm{ev}_x^*\,\in\,{\mathbb R}_{\ge 0}.
\]
Given dualizable objects $x,y,z\in\cC$, Frobenius reciprocity (or pivotality) provides canonical isomorphisms
\[
\!\!\tikz{
\node[scale=1.2] at (0,.35) {$
\scriptstyle
\mathrm{Hom}(1,x\otimes y\otimes z) \,\,\cong\,\,
\mathrm{Hom}(1,y\otimes z\otimes x) \,\,\cong\,\,
\mathrm{Hom}(1,z\otimes x\otimes y) \,\,\cong\,\,
\mathrm{Hom}(\overline z,x\otimes y) \,\,\cong\,\,
\mathrm{Hom}(\overline x,y\otimes z) \,\,\cong\,\,
\mathrm{Hom}(\overline y,z\otimes x)
$};
\node[scale=1.2] at (0,-.35) {$
\scriptstyle \,\,\cong\,\,
\mathrm{Hom}(\overline z\otimes \overline y,x)\,\,\cong\,\,
\mathrm{Hom}(\overline x\otimes \overline z,y)\,\,\cong\,\,
\mathrm{Hom}(\overline y\otimes \overline x,z)\,\,\cong\,\,
\mathrm{Hom}(\overline z\otimes \overline y\otimes \overline x,1)\,\,\cong\,\,
\mathrm{Hom}(\overline x\otimes \overline z\otimes \overline y,1)\,\,\cong\,\,
\mathrm{Hom}(\overline y\otimes \overline x\otimes \overline z,1).$};}
\]
The sesquilinear pairing o, for $f,g\in \mathrm{Hom}(1,x\otimes y\otimes z)$, equips this vector space with the structure of a finite dimensional Hilbert space.
The dual (or complex conjugate) Hilbert space is then given by any one the following canonically isomorphic vector spaces:
\[
\!\!\tikz{
\node[scale=1.2] at (0,.35) {$
\scriptstyle
\mathrm{Hom}(1,\overline z\otimes \overline y\otimes \overline x) \,\,\cong\,\,
\mathrm{Hom}(1,\overline y\otimes \overline x\otimes \overline z) \,\,\cong\,\,
\mathrm{Hom}(1,\overline x\otimes \overline z\otimes \overline y) \,\,\cong\,\,
\mathrm{Hom}( x,\overline z\otimes \overline y) \,\,\cong\,\,
\mathrm{Hom}( z,\overline y\otimes \overline x) \,\,\cong\,\,
\mathrm{Hom}( y,\overline x\otimes \overline z)
$};
\node[scale=1.2] at (0,-.35) {$
 \scriptstyle \,\,\cong\,\,
\mathrm{Hom}( x\otimes  y,\overline z)\,\,\cong\,\,
\mathrm{Hom}( z\otimes  x,\overline y)\,\,\cong\,\,
\mathrm{Hom}( y\otimes  z,\overline x)\,\,\cong\,\,
\mathrm{Hom}( x\otimes  y\otimes  z,1) \,\,\cong\,\,
\mathrm{Hom}( z\otimes  x\otimes  y,1) \,\,\cong\,\,
\mathrm{Hom}( y\otimes  z\otimes  x,1).
$};}
\]

Let $e_i\in\mathrm{Hom}(1,x\otimes y\otimes z)$ and $e^i\in\mathrm{Hom}(1,\overline z\otimes \overline y\otimes \overline x)$ be dual bases, and consider the canonical element 
\[
\sqrt{d_xd_yd_z}\,\cdot\,\sum_i e_i\otimes e^i. 
\]
We will be making great use of string diagrams where pairs of trivalent nodes are labeled by the above canonical element.
These will be denoted by pairs of circular colored nodes, as follows:
\begin{equation}
\label{eq: coloured dots notation}
\begin{tikzpicture}[baseline=.2cm]
	\draw (.2,.6) -- (0,.3) -- (-.2,.6);
	\draw (0,0) -- (0,.3);
	\draw[fill=\betacolor] (0,.3) circle (.05cm);	
	\node at (-.2,.8) {\scriptsize{$x$}};
	\node at (.2,.8) {\scriptsize{$y$}};
	\node at (0,-.2) {\scriptsize{$z$}};
\end{tikzpicture}
\otimes
\begin{tikzpicture}[baseline=-.38cm]
	\draw (.2,-.6) -- (0,-.3) -- (-.2,-.6);
	\draw (0,0) -- (0,-.3);
	\draw[fill=\betacolor] (0,-.3) circle (.05cm);	
	\node at (-.2,-.8) {\scriptsize{$x$}};
	\node at (.2,-.8) {\scriptsize{$y$}};
	\node at (0,.2) {\scriptsize{$z$}};
\end{tikzpicture}
:=\,
\sqrt{d_xd_yd_z}\,\cdot\,\sum_i\,\,\,
\begin{tikzpicture}[baseline=.17cm]
	\draw (.15,.8) -- (.15,.4) (-.15,.4) -- (-.15,.8);
	\draw (0,-.2) -- (0,.4);
	\draw[fill=\betacolor] (0,.3) circle (.05cm);	
	\node at (-.15,1) {\scriptsize{$x$}};
	\node at (.15,1) {\scriptsize{$y$}};
	\node at (0,-.4) {\scriptsize{$z$}};
	\roundNbox{unshaded}{(0,.3)}{.3}{0}{0}{$e_i$};
\end{tikzpicture}
\,\otimes\,
\begin{tikzpicture}[baseline=-.45cm]
	\draw (.15,-.8) -- (.15,-.4) (-.15,-.4) -- (-.15,-.8);
	\draw (0,.2) -- (0,-.4);
	\draw[fill=\betacolor] (0,-.3) circle (.05cm);	
	\node at (-.15,-1) {\scriptsize{$x$}};
	\node at (.15,-1) {\scriptsize{$y$}};
	\node at (0,.4) {\scriptsize{$z$}};
	\roundNbox{unshaded}{(0,-.3)}{.3}{0}{0}{$e^i$};
\end{tikzpicture}
\end{equation}
\begin{rem}
The element
$
\begin{tikzpicture}[scale=.8, baseline=.13cm]
	\draw (.2,.6) -- (0,.3) -- (-.2,.6);
	\draw (0,0) -- (0,.3);
	\draw[fill=\betacolor] (0,.3) circle (.05cm);	
	\node at (-.2,.77) {\scriptsize{$x$}};
	\node at (.2,.75) {\scriptsize{$y$}};
	\node at (0,-.15) {\scriptsize{$z$}};
\end{tikzpicture}
{\otimes}
\begin{tikzpicture}[scale=.8, baseline=-.35cm]
	\draw (.2,-.6) -- (0,-.3) -- (-.2,-.6);
	\draw (0,0) -- (0,-.3);
	\draw[fill=\betacolor] (0,-.3) circle (.05cm);	
	\node at (-.2,-.75) {\scriptsize{$x$}};
	\node at (.2,-.77) {\scriptsize{$y$}};
	\node at (0,.15) {\scriptsize{$z$}};
\end{tikzpicture}
$
lies in $\mathrm{Hom}(z,x\otimes y)\,\otimes\, \mathrm{Hom}(x\otimes y,z)$,\vspace{-.4cm}
and should not be confused with
$
\begin{tikzpicture}[scale=.8, baseline=.13cm]
	\draw (.2,.6) -- (0,.3) -- (-.2,.6);
	\draw (0,0) -- (0,.3);
	\draw[fill=\betacolor] (0,.3) circle (.05cm);	
	\node at (-.2,.77) {\scriptsize{$x$}};
	\node at (.2,.75) {\scriptsize{$y$}};
	\node at (0,-.15) {\scriptsize{$z$}};
\end{tikzpicture}\!\!%
\begin{tikzpicture}[scale=.8, baseline=-.35cm]
	\draw (.2,-.6) -- (0,-.3) -- (-.2,-.6);
	\draw (0,0) -- (0,-.3);
	\draw[fill=\betacolor] (0,-.3) circle (.05cm);	
	\node at (-.2,-.75) {\scriptsize{$x$}};
	\node at (.2,-.77) {\scriptsize{$y$}};
	\node at (0,.15) {\scriptsize{$z$}};
\end{tikzpicture}\!
\in\,\mathrm{Hom}(z\otimes x\otimes y,x\otimes y\otimes z)$.
\end{rem}

When occurring in a bigger diagram, it might happen that we need to use the above canonical elements in more that one place. 
In that case, we will use multiple colors to indicate the various pairs of nodes
(often, the coupling can also be inferred from the string labels).
The remaining coupons will be sometimes denoted by little squares.
For example:
\begin{equation}
\begin{tikzpicture}[xscale=1.1, baseline=.25cm]
\coordinate (A) at (-1.1,.8);
\coordinate (B) at (0,.9);
\coordinate (C) at (1.2,.8);
\coordinate (D) at (-.8,-.3);
\coordinate (E) at (.25,-.3);
\coordinate (F) at (1.7,-.3);
\draw (A) -- +(0,.9);
\draw (B) -- +(0,.8);
\draw (C) -- +(.35,.9);
\draw (C) -- +(-.35,.9);
\draw (C) -- +(-.35,-1.7);
\draw (A) -- (D);
\draw (B) -- (D);
\draw (B) -- (E);
\draw (C) -- (E);
\draw (C) -- (F);
\draw (D) -- +(0,-.6);
\draw (E) -- +(0,-.6);
\draw (F) -- +(-.3,-.6);
\draw (F) -- +(.3,-.6);
	\draw[fill=\betacolor] (B) circle (.05cm);	
	\draw[fill=\betacolor] (E) circle (.05cm);	
	\draw[fill=\alphacolor] (D) circle (.05cm);	
	\draw[fill=\alphacolor] (F) circle (.05cm);	
	\filldraw (A) +(.05,.05) rectangle +(-.05,-.05) (A) node[left, scale=.8]{$f$};	
	\filldraw (C) +(.05,.05) rectangle +(-.05,-.05) (C) node[left, scale=.8, xshift=-1]{$g$};	
\node[scale=.9] at (-1.1,.2) {\scriptsize{$x$}};
\node[scale=.9] at (-.5,.4) {\scriptsize{$y$}};
\node[scale=.9] at (0.02,.25) {\scriptsize{$z$}};
\node[scale=.9] at (.57,.27) {\scriptsize{$\overline v$}};
\node[scale=.9] at (1.38,.08) {\scriptsize{$\overline y$}};
\node[scale=.9] at (-.98,-.85) {\scriptsize{$u$}};
\node[scale=.9] at (.1,-.83) {\scriptsize{$\overline y$}};
\node[scale=.9] at (.8,-.4) {\scriptsize{$w$}};
\node[scale=.9] at (1.25,-.82) {\scriptsize{$\overline u$}};
\node[scale=.9] at (1.8,-.85) {\scriptsize{$x$}};
\node[scale=.9] at (-1.28,1.65) {\scriptsize{$u$}};
\node[scale=.9] at (-.15,1.65) {\scriptsize{$v$}};
\node[scale=.9] at (.7,1.65) {\scriptsize{$s$}};
\node[scale=.9] at (1.37,1.66) {\scriptsize{$t$}};
\end{tikzpicture}
\!:=\,\,
\sqrt{d_xd_yd_u}\sqrt{d_vd_yd_z}
\,\cdot\,
\sum_{i,j}\,\,
\begin{tikzpicture}[xscale=1.2, baseline=.2cm]
\coordinate (A) at (-1.1,.85);
\coordinate (B) at (.1,1);
\coordinate (C) at (1.3,.8);
\coordinate (D) at (-.8,-.3);
\coordinate (E) at (.3,-.4);
\coordinate (F) at (1.7,-.3);
\draw (A) -- +(0,.85);
\draw (B) -- +(0,.7);
\draw (C) to[in=-90-20,out=90-20] +(.35,.9);
\draw (C) to[in=-90+20,out=90+20] +(-.35,.9);
\draw (C) to[in=87,out=-90] +(-.37,-1.8);
\draw (A) to[in=90,out=-90] (D);
\draw (B) to[in=90-10,out=-90-10] (D);
\draw (B) to[in=90+10,out=-90+10] (E);
\draw (C) to[in=90-15,out=-90-15] (E);
\draw (C)+(.1,0) to[in=85,out=-95] (F);
\draw (D) -- +(0,-.7);
\draw (E) -- +(0,-.6);
\draw (F) -- +(-.3,-.7);
\draw (F) -- +(.3,-.7);
	\roundNbox{unshaded}{(B)}{.3}{0}{0}{$e^i$};
	\roundNbox{unshaded}{(F)}{.3}{0}{0}{$e^j$};
	\roundNbox{unshaded}{(E)}{.3}{0}{0}{$e_i$};
	\roundNbox{unshaded}{(D)}{.3}{0}{0}{$e_j$};
	\roundNbox{unshaded}{(C)}{.3}{0}{0}{$g$};
	\roundNbox{unshaded}{(A)}{.3}{0}{0}{$f$};
\node[scale=.85] at (-1.1,.24) {\scriptsize{$x$}};
\node[scale=.85] at (-.5,.4) {\scriptsize{$y$}};
\node[scale=.85] at (0.07,.25) {\scriptsize{$z$}};
\node[scale=.85] at (.65,.27) {\scriptsize{$\overline v$}};
\node[scale=.85] at (1.4,.2) {\scriptsize{$\overline y$}};
\node[scale=.85] at (-.98,-.93) {\scriptsize{$u$}};
\node[scale=.85] at (.15,-.92) {\scriptsize{$\overline y$}};
\node[scale=.85] at (.9,-.35) {\scriptsize{$w$}};
\node[scale=.85] at (1.25,-.9) {\scriptsize{$\overline u$}};
\node[scale=.85] at (1.8,-.93) {\scriptsize{$x$}};
\node[scale=.85] at (-1.26,1.65) {\scriptsize{$u$}};
\node[scale=.85] at (-.07,1.65) {\scriptsize{$v$}};
\node[scale=.85] at (.8,1.65) {\scriptsize{$s$}};
\node[scale=.85] at (1.45,1.66) {\scriptsize{$t$}};
\end{tikzpicture}
\end{equation}

When $x,y,z\in\cC$ are irreducible objects, we will write $N_{x,y}^z$ for the dimension of $\mathrm{Hom}( x\otimes  y,z)$.
Let us also fix a set $\Irr(\cC)\subset \Ob(\cC)$ of representatives of the isomorphism classes of irreducible objects.

The following lemma lists the most important relations satisfied in the above graphical calculus.
To our knowledge, the following relations have not appeared in this exact form in the literature, but they are certainly well known to experts:

\begin{lem}
The following relations hold:
\begin{align*}
\begin{tikzpicture}[baseline=-.1cm]
	\draw (0,-.6) -- (0,-.3);
	\draw (0,-.3) .. controls ++(45:.2cm) and ++(-45:.2cm) .. (0,.3);
	\draw (0,-.3) .. controls ++(135:.2cm) and ++(225:.2cm) .. (0,.3);
	\draw (0,.6) -- (0,.3);
	\draw[fill=\betacolor] (0,-.3) circle (.05cm);
	\draw[fill=\betacolor] (0,.3) circle (.05cm);	
	\node at (0,-.8) {\scriptsize{$z$}};
	\node at (.3,0) {\scriptsize{$y$}};
	\node at (-.3,0) {\scriptsize{$x$}};
	\node at (0,.8) {\scriptsize{$z$}};
\end{tikzpicture}
=& \,\,\,
\sqrt{d_xd_yd_z^{-1}}\cdot N_{x,y}^z\,
\begin{tikzpicture}[baseline=-.1cm]
	\draw (-.2,-.6) -- (-.2,.6);
	\node at (-.2,-.8) {\scriptsize{$z$}};
\end{tikzpicture}
&&
\text{\emph{(Bigon 1)}}
\\
\begin{tikzpicture}[baseline=-.1cm]
	\draw (0,-.6) -- (0,-.3);
	\draw (0,-.3) .. controls ++(45:.2cm) and ++(-45:.2cm) .. (0,.3);
	\draw (0,-.3) .. controls ++(135:.2cm) and ++(225:.2cm) .. (0,.3);
	\draw (0,.6) -- (0,.3);
	\draw[fill=\betacolor] (0,-.3) circle (.05cm);
	\draw[fill=\alphacolor] (0,.3) circle (.05cm);	
	\node at (0,-.8) {\scriptsize{$z$}};
	\node at (.3,0) {\scriptsize{$y$}};
	\node at (-.3,0) {\scriptsize{$x$}};
	\node at (0,.8) {\scriptsize{$z$}};
\end{tikzpicture}
\otimes
\begin{tikzpicture}[baseline=.2cm]
	\draw (.2,.6) -- (0,.3) -- (-.2,.6);
	\draw (0,0) -- (0,.3);
	\draw[fill=\alphacolor] (0,.3) circle (.05cm);	
	\node at (-.2,.8) {\scriptsize{$x$}};
	\node at (.2,.8) {\scriptsize{$y$}};
	\node at (0,-.2) {\scriptsize{$z$}};
\end{tikzpicture}
\otimes
\begin{tikzpicture}[baseline=-.4cm]
	\draw (.2,-.6) -- (0,-.3) -- (-.2,-.6);
	\draw (0,0) -- (0,-.3);
	\draw[fill=\betacolor] (0,-.3) circle (.05cm);	
	\node at (-.2,-.8) {\scriptsize{$x$}};
	\node at (.2,-.8) {\scriptsize{$y$}};
	\node at (0,.2) {\scriptsize{$z$}};
\end{tikzpicture}
&\,= \,\,
\sqrt{d_xd_yd_z^{-1}}\cdot
\begin{tikzpicture}[baseline=-.1cm]
	\draw (-.2,-.6) -- (-.2,.6);
	\node at (-.2,-.8) {\scriptsize{$z$}};
\end{tikzpicture}
\otimes
\begin{tikzpicture}[baseline=.2cm]
	\draw (.2,.6) -- (0,.3) -- (-.2,.6);
	\draw (0,0) -- (0,.3);
	\draw[fill=\betacolor] (0,.3) circle (.05cm);	
	\node at (-.2,.8) {\scriptsize{$x$}};
	\node at (.2,.8) {\scriptsize{$y$}};
	\node at (0,-.2) {\scriptsize{$z$}};
\end{tikzpicture}
\otimes
\begin{tikzpicture}[baseline=-.4cm]
	\draw (.2,-.6) -- (0,-.3) -- (-.2,-.6);
	\draw (0,0) -- (0,-.3);
	\draw[fill=\betacolor] (0,-.3) circle (.05cm);	
	\node at (-.2,-.8) {\scriptsize{$x$}};
	\node at (.2,-.8) {\scriptsize{$y$}};
	\node at (0,.2) {\scriptsize{$z$}};
\end{tikzpicture}
&&
\text{\emph{(Bigon 2)}}
\\
\sum_{z\in \Irr(\cC)}
\sqrt{d_z}&
\begin{tikzpicture}[baseline=-.1cm]
	\draw (.2,-.6) -- (0,-.3) -- (-.2,-.6);
	\draw (.2,.6) -- (0,.3) -- (-.2,.6);
	\draw (0,-.3) -- (0,.3);
	\draw[fill=\betacolor] (0,-.3) circle (.05cm);
	\draw[fill=\betacolor] (0,.3) circle (.05cm);	
	\node at (-.2,-.8) {\scriptsize{$x$}};
	\node at (.2,-.8) {\scriptsize{$y$}};
	\node at (-.2,.8) {\scriptsize{$x$}};
	\node at (.2,.8) {\scriptsize{$y$}};
	\node at (.2,0) {\scriptsize{$z$}};
\end{tikzpicture}
\,=\,\sqrt{d_xd_y}\cdot
\begin{tikzpicture}[baseline=-.1cm]
	\draw (.2,-.6) -- (.2,.6);
	\draw (-.2,-.6) -- (-.2,.6);
	\node at (-.2,-.8) {\scriptsize{$x$}};
	\node at (.2,-.8) {\scriptsize{$y$}};
\end{tikzpicture}
&&
\,\text{\emph{(Fusion)}}
\\
\qquad
\sum_{v\in \Irr(\cC)}
\begin{tikzpicture}[baseline=-.1cm]
	\draw (.2,-.6) -- (0,-.3) -- (-.2,-.6);
	\draw (.2,.6) -- (0,.3) -- (-.2,.6);
	\draw (0,-.3) -- (0,.3);
	\draw[fill=\betacolor] (0,-.3) circle (.05cm);
	\draw[fill=\alphacolor] (0,.3) circle (.05cm);
	\node at (-.2,-.8) {\scriptsize{$x$}};
	\node at (.2,-.8) {\scriptsize{$w$}};
	\node at (-.2,.8) {\scriptsize{$y$}};
	\node at (.2,.8) {\scriptsize{$z$}};
	\node at (.2,0) {\scriptsize{$v$}};
\end{tikzpicture}
\,\otimes
\begin{tikzpicture}[baseline=-.1cm]
	\draw (.2,-.6) -- (0,-.3) -- (-.2,-.6);
	\draw (.2,.6) -- (0,.3) -- (-.2,.6);
	\draw (0,-.3) -- (0,.3);
	\draw[fill=\betacolor] (0,-.3) circle (.05cm);
	\draw[fill=\alphacolor] (0,.3) circle (.05cm);
	\node at (-.2,-.8) {\scriptsize{$\overline w$}};
	\node at (.2,-.8) {\scriptsize{$\overline x$}};
	\node at (-.2,.8) {\scriptsize{$\overline z$}};
	\node at (.2,.8) {\scriptsize{$\overline y$}};
	\node at (.2,0) {\scriptsize{$\overline v$}};
\end{tikzpicture}
&\,=
\sum_{u\in \Irr(\cC)}
\begin{tikzpicture}[baseline=-.1cm, rotate=90]
	\draw (.4,-.5) -- (-.03,-.3) -- (-.4,-.5);
	\draw (.4,.5) -- (.03,.3) -- (-.4,.5);
	\draw (-.03,-.3) -- (.03,.3);
	\draw[fill=\betacolor] (-.03,-.3) circle (.05cm);
	\draw[fill=\alphacolor] (.03,.3) circle (.05cm);
	\node at (-.6,-.5) {\scriptsize{$w$}};
	\node at (.6,-.5) {\scriptsize{$z$}};
	\node at (-.6,.5) {\scriptsize{$x$}};
	\node at (.6,.5) {\scriptsize{$y$}};
	\node at (.2,0) {\scriptsize{$u$}};
\end{tikzpicture}
\otimes
\begin{tikzpicture}[baseline=-.1cm, rotate=90]
	\draw (.4,-.5) -- (.03,-.3) -- (-.4,-.5);
	\draw (.4,.5) -- (-.03,.3) -- (-.4,.5);
	\draw (.03,-.3) -- (-.03,.3);
	\draw[fill=\alphacolor] (.03,-.3) circle (.05cm);
	\draw[fill=\betacolor] (-.03,.3) circle (.05cm);
	\node at (-.6,-.5) {\scriptsize{$\overline x$}};
	\node at (.6,-.5) {\scriptsize{$\overline y$}};
	\node at (-.6,.5) {\scriptsize{$\overline w$}};
	\node at (.6,.5) {\scriptsize{$\overline z$}};
	\node at (.2,0) {\scriptsize{$\overline u$}};
\end{tikzpicture}
&&
\,\,\,\,\text{\emph{(I=H)}}
\end{align*}
\end{lem}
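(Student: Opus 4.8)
The plan is to derive all four relations from three ingredients: semisimplicity together with Schur's lemma (so that every endomorphism of a simple object is a scalar multiple of the identity), sphericality (so that the categorical trace is well defined and a diagram may be closed up along any strand without ambiguity), and the explicit definition of the colored-node element $\sqrt{d_xd_yd_z}\sum_i e_i\otimes e^i$ together with the fact that $\{e_i\}$ and $\{e^i\}$ are dual bases for the canonical pairing between $\Hom(1,x\otimes y\otimes z)$ and its conjugate Hilbert space $\Hom(1,\overline z\otimes\overline y\otimes\overline x)$. Throughout I take $x,y,z$ (and the internal summation labels) irreducible, as the notation $N^z_{x,y}$ requires.

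For (Bigon 1) I would note that, since $z$ is irreducible, the left-hand side is an endomorphism of the simple object $z$, hence equals $c\cdot\id_z$ for a scalar $c$. To determine $c$ I take the categorical trace of both sides: the right side gives $c\cdot d_z$, while on the left, expanding the single $\beta$-coloured canonical element as $\sqrt{d_xd_yd_z}\sum_i e^i\circ e_i$ and closing the $z$-strand yields the closed theta-diagram, which by duality of $\{e_i\}$ and $\{e^i\}$ evaluates to $\dim\Hom(x\otimes y,z)=N^z_{x,y}$. Hence $c\cdot d_z=\sqrt{d_xd_yd_z}\,N^z_{x,y}$, giving $c=\sqrt{d_xd_yd_z^{-1}}\,N^z_{x,y}$ as claimed. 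The relation (Bigon 2) is the non-diagonal refinement of the same computation: here the top of the bigon and the free splitting vertex carry the colour $\alpha$ (the two halves of one canonical element) while the bottom of the bigon and the free merging vertex carry $\beta$, so the bigon is the endomorphism $e^i\circ e_j$ of $z$. Schur's lemma together with orthonormality of the basis forces this to be $\delta_{ij}$ times the scalar already computed in (Bigon 1), and resumming $\sum_{i,j}\delta_{ij}\,e_i\otimes e^j$ collapses the remaining $\alpha$- and $\beta$-legs into a single $\beta$-coloured canonical element, which is the right-hand side.

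For (Fusion) I would invoke completeness of intermediate channels: by semisimplicity $\id_{x\otimes y}=\sum_{z\in\Irr(\cC)}p_z$, where $p_z$ is the projection onto the $z$-isotypic component, and each $p_z$ equals $\sum_\alpha\alpha\circ\alpha^*$ for $\alpha$ ranging over an orthonormal basis of $\Hom(z,x\otimes y)$. It then remains to rewrite this resolution of the identity in terms of the colored-node vertices; since each colored vertex is a $\sqrt{d_xd_yd_z}$-multiple of an orthonormal one, the prefactors $\sqrt{d_z}$ on the left and $\sqrt{d_xd_y}$ on the right are precisely what is needed to convert $\sum_{z,\alpha}\alpha\alpha^*$ into the stated normalization. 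I would verify the bookkeeping by pairing both sides against a single strand and applying (Bigon 1).

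Finally, I would read (I=H) as the statement that one basis-independent element — the canonical copairing $\sum_k f_k\otimes\overline{f_k}$ of the finite-dimensional Hilbert space $\Hom(x\otimes w,y\otimes z)$ with its conjugate — is expanded in two different orthonormal bases. Both the $s$-channel family (indexed by the internal label $v$ together with the vertex multiplicities) and the $t$-channel family (indexed by $u$) are orthonormal bases of $\Hom(x\otimes w,y\otimes z)$: orthonormality, with the given $\sqrt{d}$ normalizations, is the content of (Bigon 2), and completeness of each family is (Fusion). Since the copairing is independent of the chosen orthonormal basis, the two sides agree. I expect the main obstacle throughout to be the careful tracking of the normalization factors and of the conjugation coherences $\varphi,\nu,j$ — in particular, checking that the pivotal trace used to close diagrams coincides with the Hilbert-space pairing that makes $\{e_i\}$ and $\{e^i\}$ dual, and that the conjugate-object legs in (I=H) genuinely assemble into the copairing rather than merely into the abstract pairing.
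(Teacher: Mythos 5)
Your proposal is correct and follows essentially the same route as the paper: the engine behind all four relations is the Schur's lemma identity $e^j\circ e_i=d_z^{-1}\delta_{i,j}\cdot\id_z$ (which the paper obtains by ``undoing the trace'' and you obtain by tracing a scalar endomorphism of the simple object $z$), and the I=H relation is in both cases the basis-independence of the canonical element $\sum_a f_a\otimes f^a$ for dual bases of $\Hom(x\otimes w,y\otimes z)$ and its conjugate. The only cosmetic differences are that the paper proves Fusion by testing against basis elements of $\Hom(z',x\otimes y)$ rather than writing a resolution of the identity, and verifies the dual-basis property in I=H directly from the same Schur identity rather than citing the earlier relations.
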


\begin{proof}
By definition, the dual basis $e_i\in\mathrm{Hom}(z,x\otimes y)$ and $e^i\in\mathrm{Hom}(x\otimes y,z)$ satisfy
\[
\mathrm{tr}(e^j \circ e_i)\,
=\,\,
\begin{tikzpicture}[baseline=-.1cm]
	\draw (.15,-.3) -- (.15,.3);
	\draw (-.15,-.3) -- (-.15,.3);
	\draw (0,.8) arc (180:0:.3cm) -- (.6,-.8) arc (0:-180:.3cm);
	\roundNbox{unshaded}{(0,.5)}{.3}{0}{0}{$e^j$};
	\roundNbox{unshaded}{(0,-.5)}{.3}{0}{0}{$e_i$};
	\node at (-.1,-1) {\scriptsize{$z$}};
	\node at (.3,0) {\scriptsize{$y$}};
	\node at (-.3,0) {\scriptsize{$x$}};
	\node at (-.1,1) {\scriptsize{$z$}};
\end{tikzpicture}
\,\,\,=\,\delta_{i,j}.
\]
By `undoing the trace' it follows that, for $e_i$ and $e^j$ as above,
\begin{equation} \label{eq: ej ei = dz delta_ij}
e^j \circ e_i = d_z^{-1}\delta_{i,j}\cdot\id_z.
\end{equation}
The two Bigon relations are immediate consequences of the above equation:
\begin{align*}
\sqrt{d_xd_yd_z}\cdot\sum_i e^i\circ e_i
&=
\sqrt{d_xd_yd_z}\cdot\sum_i d_z^{-1}\cdot\id_z=
\sqrt{d_xd_yd_z^{-1}}\,N_{x,y}^z\cdot\id_z
\\
d_xd_yd_z\sum_{i,j} (e^j\circ e_i)\otimes e_j\otimes e^i
=\,&
d_xd_y\sum_{i,j} \delta_{i,j} \id_z\otimes e_j\otimes e^i
=
\sqrt{d_xd_yd_z^{-1}}\sqrt{d_xd_yd_z}\sum_i \id_z\otimes e_i\otimes e^i.
\end{align*}

In order to prove the Fusion relation
\[
\sum_{z,j} \sqrt{d_z}\sqrt{d_xd_yd_z}\cdot e_j \circ e^j=\sqrt{d_xd_y}\cdot\id_{x\otimes y}\,,
\]
it is enough to argue that it holds after precomposition with an arbitrary basis element $e_i\in\mathrm{Hom}(z',x\otimes y)$ and object $z'\in\Irr(\cC)$.
So we must show that the equation
$\sum_{z,j} d_z\cdot e_j \circ e^j\circ e_i = e_i$ holds.
This is again a consequence of equation (\ref{eq: ej ei = dz delta_ij}):
\[
\sum_{z,j} d_z\cdot e_j \circ e^j\circ e_i
=\sum_{z,j} d_z\cdot e_j\circ( d_z^{-1}\delta_{z,z'}\delta_{i,j}\cdot\id_z)\,=\,e_i.
\]

To prove the I=H relation, we rewrite it as
\[
\sqrt{d_xd_yd_zd_w}\,\cdot\,\sum_{v,i,j}\, d_v
\begin{tikzpicture}[baseline=-.1cm]
\coordinate (d) at (.4,-.5);
\coordinate (e) at (.4,.5);
    \draw (.4,-.5) -- (.4,.5);
	\draw (.25,1) -- (.25,.5);
	\draw (.55,1) -- (.55,.5);
	\draw (.25,-1) -- (.25,-.5);
	\draw (.55,-1) -- (.55,-.5);
    \roundNbox{unshaded}{(d)}{.3}{0}{0}{$e^i$}
    \roundNbox{unshaded}{(e)}{.3}{0}{0}{$e_j$}
\node at (.25,-1.2) {\scriptsize{$x$}};
\node at (.55,-1.2) {\scriptsize{$w$}};
\node at (.25,1.17) {\scriptsize{$y$}};
\node at (.55,1.2) {\scriptsize{$z$}};
\node at (.57,0) {\scriptsize{$v$}};
\end{tikzpicture}
\,\,\otimes\,\,
\begin{tikzpicture}[baseline=-.1cm]
\coordinate (d) at (.4,-.5);
\coordinate (e) at (.4,.5);
    \draw (.4,-.5) -- (.4,.5);
	\draw (.25,1) -- (.25,.5);
	\draw (.55,1) -- (.55,.5);
	\draw (.25,-1) -- (.25,-.5);
	\draw (.55,-1) -- (.55,-.5);
    \roundNbox{unshaded}{(d)}{.3}{0}{0}{$e_i$}
    \roundNbox{unshaded}{(e)}{.3}{0}{0}{$e^j$}
\node at (.25,-1.18) {\scriptsize{$\overline w$}};
\node at (.55,-1.18) {\scriptsize{$\overline x$}};
\node at (.25,1.23) {\scriptsize{$\overline z$}};
\node at (.55,1.2) {\scriptsize{$\overline y$}};
\node at (.57,0) {\scriptsize{$\overline v$}};
\end{tikzpicture}
\,\,=\,
\sqrt{d_xd_yd_zd_w}\,\cdot\,\sum_{u,i,j}\, d_u\,\,
\begin{tikzpicture}[baseline=-.1cm]
\coordinate (d) at (-.5,0);
\coordinate (e) at (.5,0);
\draw (d) -- (e) (-.5,-.5) -- (-.5,.5) (.5,-.5) -- (.5,.5);
\roundNbox{unshaded}{(d)}{.3}{0}{0}{$e^i$}
\roundNbox{unshaded}{(e)}{.3}{0}{0}{$e_j$}
	\node at (-.5,-.65) {\scriptsize{$x$}};
	\node at (.5,-.65) {\scriptsize{$w$}};
	\node at (-.5,.65) {\scriptsize{$y$}};
	\node at (.5,.65) {\scriptsize{$z$}};
	\node at (0,.17) {\scriptsize{$u$}};
\end{tikzpicture}
\,\,\otimes\;\!
\,\begin{tikzpicture}[baseline=-.1cm]
\coordinate (d) at (-.5,0);
\coordinate (e) at (.5,0);
\draw (d) -- (e) (-.5,-.5) -- (-.5,.5) (.5,-.5) -- (.5,.5);
\roundNbox{unshaded}{(d)}{.3}{0}{0}{$e^{j'}$}
\roundNbox{unshaded}{(e)}{.3}{0}{0}{$e_{i'}$}
\node at (-.5,-.65) {\scriptsize{$\overline w$}};
\node at (.5,-.65) {\scriptsize{$\overline x$}};
\node at (-.5,.65) {\scriptsize{$\overline z$}};
\node at (.5,.65) {\scriptsize{$\overline y$}};
\node at (0,.2) {\scriptsize{$\overline u$}};
\end{tikzpicture}
\]
and note that both sides are of the form $\sqrt{d_xd_yd_zd_w}\,\sum f_a\otimes f^a$ for $\{f_a\}$ a basis of $\mathrm{Hom}(x\otimes w, y\otimes z)$ and $\{f^a\}$ the dual basis of $\mathrm{Hom}(\overline w\otimes \overline x, \overline z\otimes \overline y)$ with respect to the pairing
\[
\big\langle
\begin{tikzpicture}[baseline=-.1cm]
	\coordinate (b) at (-.4,0);
	\coordinate (c) at (-.4,-.5);
	\draw (-.25,-.5) -- (-.25,.5);
	\draw (-.55,-.5) -- (-.55,.5);
    \roundNbox{unshaded}{(b)}{.3}{0}{0}{$f$}
	\node at (-.6,.65) {\scriptsize{$y$}};
	\node at (-.2,.65) {\scriptsize{$z$}};
	\node at (-.6,-.65) {\scriptsize{$x$}};
	\node at (-.2,-.65) {\scriptsize{$w$}};
\end{tikzpicture}
,
\begin{tikzpicture}[baseline=-.1cm]
	\coordinate (b) at (-.4,0);
	\draw (-.25,-.5) -- (-.25,.5);
	\draw (-.55,-.5) -- (-.55,.5);
    \roundNbox{unshaded}{(b)}{.3}{0}{0}{$g$}
\node at (-.6,.67) {\scriptsize{$\overline z$}};
\node at (-.2,.67) {\scriptsize{$\overline y$}};
\node at (-.6,-.67) {\scriptsize{$\overline w$}};
\node at (-.2,-.67) {\scriptsize{$\overline x$}};
\end{tikzpicture}
\big\rangle
\,\,:=\,\,
\begin{tikzpicture}[baseline=-.1cm]
	\coordinate (b) at (-.4-.05,0);
	\coordinate (c) at (.4+.05,0);
	\draw (-.3,.45) -- (-.3,-.45) arc (-180:0:.3);
	\draw (-.6,.45) -- (-.6,-.45) arc (-180:0:.6);
    \roundNbox{unshaded}{(b)}{.3}{0}{0}{$f$}
	\draw (.3,-.45) -- (.3,.45) arc (0:180:.3);
	\draw (.6,-.45) -- (.6,.45) arc (0:180:.6);
    \roundNbox{unshaded}{(c)}{.3}{0}{0}{$g$}
\end{tikzpicture}\,.
\vspace{-.2cm}
\]
To see that
$
\,d_v\begin{tikzpicture}[baseline=-.1cm]
\coordinate (d) at (.4,-.5);
\coordinate (e) at (.4,.5);
    \draw (.4,-.5) -- (.4,.5);
	\draw (.25,1) -- (.25,.5);
	\draw (.55,1) -- (.55,.5);
	\draw (.25,-1) -- (.25,-.5);
	\draw (.55,-1) -- (.55,-.5);
    \roundNbox{unshaded}{(d)}{.3}{0}{0}{$e^i$}
    \roundNbox{unshaded}{(e)}{.3}{0}{0}{$e_j$}
\node at (.25,-1.2) {\scriptsize{$x$}};
\node at (.55,-1.2) {\scriptsize{$w$}};
\node at (.25,1.17) {\scriptsize{$y$}};
\node at (.55,1.2) {\scriptsize{$z$}};
\node at (.57,0) {\scriptsize{$v$}};
\end{tikzpicture}\,
$
and
$
\begin{tikzpicture}[baseline=-.1cm]
\coordinate (d) at (.4,-.5);
\coordinate (e) at (.4,.5);
    \draw (.4,-.5) -- (.4,.5);
	\draw (.25,1) -- (.25,.5);
	\draw (.55,1) -- (.55,.5);
	\draw (.25,-1) -- (.25,-.5);
	\draw (.55,-1) -- (.55,-.5);
    \roundNbox{unshaded}{(d)}{.3}{0}{0}{$e_i$}
    \roundNbox{unshaded}{(e)}{.3}{0}{0}{$e^j$}
\node at (.25,-1.18) {\scriptsize{$\overline w$}};
\node at (.55,-1.18) {\scriptsize{$\overline x$}};
\node at (.25,1.23) {\scriptsize{$\overline z$}};
\node at (.55,1.2) {\scriptsize{$\overline y$}};
\node at (.57,0) {\scriptsize{$\overline v$}};
\end{tikzpicture}\,
$
are indeed dual bases, we use the relation (\ref{eq: ej ei = dz delta_ij}) twice:
\[
d_v\,\,\,
\begin{tikzpicture}[baseline=-.1cm]
\coordinate (a) at (.4-1,-.5);
\coordinate (b) at (.4-1,.5);
    \draw (.4-1,-.5) -- (.4-1,.5);
	\draw (.25-1,.5) -- (.25-1,.9) arc (180:0:.65);
	\draw (.55-1,.5) -- (.55-1,.9) arc (180:0:.35);
\draw (.25-1,-.5) -- (.25-1,-.9) arc (-180:0:.65);
\draw (.55-1,-.5) --(.55-1,-.9) arc (-180:0:.35);
    \roundNbox{unshaded}{(a)}{.3}{0}{0}{$e^i$}
    \roundNbox{unshaded}{(b)}{.3}{0}{0}{$e_j$}
\node at (.57-1,0) {\scriptsize{$v$}};
\coordinate (d) at (.4,-.5);
\coordinate (e) at (.4,.5);
    \draw (.4,-.5) -- (.4,.5);
	\draw (.25,.9) -- (.25,.5);
	\draw (.55,.9) -- (.55,.5);
	\draw (.25,-.9) -- (.25,-.5);
	\draw (.55,-.9) -- (.55,-.5);
    \roundNbox{unshaded}{(d)}{.3}{0}{0}{$e_{i'}$}
    \roundNbox{unshaded}{(e)}{.3}{0}{0}{$e^{j'}$}
\node at (.57,0) {\scriptsize{$\overline {v'}$}};
\end{tikzpicture}
\,\,=\,d_vd_v^{-1}\delta_{v,v'}\delta_{j,j'}\,\,\,
\begin{tikzpicture}[baseline=-.1cm]
\coordinate (a) at (.4-1,-.5);
\coordinate (b) at (.4-1,.5);
    \draw (.4-1,-.5) -- (.4-1,.5) arc (180:0:.5);
\draw (.25-1,-.5) -- (.25-1,-.9) arc (-180:0:.65);
\draw (.55-1,-.5) --(.55-1,-.9) arc (-180:0:.35);
    \roundNbox{unshaded}{(a)}{.3}{0}{0}{$e^i$}
\node at (.57-1,0) {\scriptsize{$v$}};
\coordinate (d) at (.4,-.5);
\coordinate (e) at (.4,.5);
    \draw (.4,-.5) -- (.4,.5);
	\draw (.25,-.9) -- (.25,-.5);
	\draw (.55,-.9) -- (.55,-.5);
    \roundNbox{unshaded}{(d)}{.3}{0}{0}{$e_{i'}$}
\node at (.57,0) {\scriptsize{$\overline v$}};
\end{tikzpicture}
\,\,=\,d_vd_v^{-2}\delta_{v,v'}\delta_{j,j'}\delta_{i,i'}\cdot
\begin{tikzpicture}[baseline=-.1cm]
\coordinate (a) at (.4-1,-.5);
\coordinate (b) at (.4-1,.5);
    \draw (.4-1,-.5) -- (.4-1,.5) arc (180:0:.5) -- (.4,-.5) arc (0:-180:.5);
\node at (.57-1,-.3) {\scriptsize{$v$}};
\end{tikzpicture}
\,\,=\delta_{v,v'}\delta_{j,j'}\delta_{i,i'}.
\]
The verification that
$
d_u\,\,
\begin{tikzpicture}[baseline=-.1cm]
\coordinate (d) at (-.5,0);
\coordinate (e) at (.5,0);
\draw (d) -- (e) (-.5,-.5) -- (-.5,.5) (.5,-.5) -- (.5,.5);
\roundNbox{unshaded}{(d)}{.3}{0}{0}{$e^i$}
\roundNbox{unshaded}{(e)}{.3}{0}{0}{$e_j$}
	\node at (-.5,-.65) {\scriptsize{$x$}};
	\node at (.5,-.65) {\scriptsize{$w$}};
	\node at (-.5,.65) {\scriptsize{$y$}};
	\node at (.5,.65) {\scriptsize{$z$}};
	\node at (0,.17) {\scriptsize{$u$}};
\end{tikzpicture}\,
$
and
$
\,\begin{tikzpicture}[baseline=-.1cm]
\coordinate (d) at (-.5,0);
\coordinate (e) at (.5,0);
\draw (d) -- (e) (-.5,-.5) -- (-.5,.5) (.5,-.5) -- (.5,.5);
\roundNbox{unshaded}{(d)}{.3}{0}{0}{$e^{j'}$}
\roundNbox{unshaded}{(e)}{.3}{0}{0}{$e_{i'}$}
\node at (-.5,-.65) {\scriptsize{$\overline w$}};
\node at (.5,-.65) {\scriptsize{$\overline x$}};
\node at (-.5,.65) {\scriptsize{$\overline z$}};
\node at (.5,.65) {\scriptsize{$\overline y$}};
\node at (0,.2) {\scriptsize{$\overline u$}};
\end{tikzpicture}\,
$
are dual bases is entirely similar.
\end{proof}

Let us now assume that $\cC$ is furthermore a fusion category, and let
$
\dim(\cC):=\sum_{x\in\Irr(\cC)}\, d_x^2
$
be its global dimension.
We then have the following result.

\begin{lem}
\label{lem:Sumdxdy}
The following relation holds:
\begin{equation}\label{eq: two bigons}
\sum_{a,b\in\Irr(\cC)}\begin{tikzpicture}[baseline=-.1cm]
	\draw (0,-.6) -- (0,-.3);
	\draw (0,-.3) .. controls ++(45:.2cm) and ++(-45:.2cm) .. (0,.3);
	\draw (0,-.3) .. controls ++(135:.2cm) and ++(225:.2cm) .. (0,.3);
	\draw (0,.6) -- (0,.3);
	\draw[fill=\betacolor] (0,-.3) circle (.05cm);
	\draw[fill=\alphacolor] (0,.3) circle (.05cm);	
	\node at (0,-.8) {\scriptsize{$x$}};
	\node at (.27,.02) {\scriptsize{$b$}};
	\node at (-.3,0) {\scriptsize{$a$}};
	\node at (0,.8) {\scriptsize{$y$}};
\end{tikzpicture}
\otimes
\begin{tikzpicture}[baseline=-.1cm]
	\draw (0,-.6) -- (0,-.3);
	\draw (0,-.3) .. controls ++(45:.2cm) and ++(-45:.2cm) .. (0,.3);
	\draw (0,-.3) .. controls ++(135:.2cm) and ++(225:.2cm) .. (0,.3);
	\draw (0,.6) -- (0,.3);
	\draw[fill=\betacolor] (0,-.3) circle (.05cm);
	\draw[fill=\alphacolor] (0,.3) circle (.05cm);	
	\node at (0,-.8) {\scriptsize{$\overline x$}};
	\node at (.3,0) {\scriptsize{$\overline a$}};
	\node at (-.3,.05) {\scriptsize{$\overline b$}};
	\node at (0,.83) {\scriptsize{$\overline y$}};
\end{tikzpicture}
\,=\,\,
\dim(\cC)\cdot\delta_{x,y}
\begin{tikzpicture}[baseline=-.1cm]
	\draw (-.2,-.6) -- (-.2,.6);
	\node at (-.2,-.8) {\scriptsize{$x$}};
\end{tikzpicture}
\otimes
\begin{tikzpicture}[baseline=-.1cm]
	\draw (-.2,-.6) -- (-.2,.6);
	\node at (-.2,-.8) {\scriptsize{$\overline x$}};
\end{tikzpicture}
\end{equation}
\end{lem}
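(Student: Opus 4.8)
The plan is to use the simplicity of $x$ and $y$ to reduce the identity to a single scalar computation. Since $x,y\in\Irr(\cC)$, the space $\Hom(x,y)$ vanishes unless $x\cong y$ and is spanned by $\id_x$ when $x=y$, and similarly for $\Hom(\overline x,\overline y)$. The first tensor factor of the left-hand side lies in $\Hom(x,y)$ and the second in $\Hom(\overline x,\overline y)$, so the whole expression is automatically zero when $x\neq y$ and a scalar multiple of $\id_x\otimes\id_{\overline x}$ when $x=y$. It therefore suffices to assume $x=y$ and to show that this scalar equals $\dim(\cC)$.

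To extract the scalar, I would unfold the two coloured-node pairs into their defining sums over dual bases. Fix bases $\{e_i\}$ of $\Hom(x,a\otimes b)$ and $\{e^i\}$ of $\Hom(a\otimes b,x)$ that are dual in the sense of relation~(\ref{eq: ej ei = dz delta_ij}). The pair of nodes at the bottom of the two bigons contributes a factor $\sqrt{d_ad_bd_x}$ and a summation index $i$, while the pair of nodes at the top contributes a second factor $\sqrt{d_ad_bd_x}$ and an independent index $j$. After this unfolding the first tensor factor becomes the bubble $e^j\circ e_i$, and the second becomes its conjugate $\overline{e^j}\circ\overline{e_i}$, the conjugation being dictated by the barred labels on the second bigon.

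The double sum then collapses by means of relation~(\ref{eq: ej ei = dz delta_ij}). Indeed $e^j\circ e_i=d_x^{-1}\delta_{ij}\,\id_x$, and applying the covariant antilinear functor $\overline{\,\cdot\,}$ gives $\overline{e^j}\circ\overline{e_i}=\overline{e^j\circ e_i}=d_x^{-1}\delta_{ij}\,\id_{\overline x}$, the scalar $d_x^{-1}\delta_{ij}$ being real. Substituting these and summing over $i$ and $j$ forces $i=j$ and introduces the multiplicity $N_{a,b}^x=\dim\Hom(x,a\otimes b)$, so that the left-hand side reduces to $\big(d_x^{-1}\sum_{a,b\in\Irr(\cC)}d_ad_bN_{a,b}^x\big)\,\id_x\otimes\id_{\overline x}$. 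I would finish with the scalar identity $\sum_{a,b}d_ad_bN_{a,b}^x=d_x\dim(\cC)$, obtained from Frobenius reciprocity $N_{a,b}^x=N_{\overline a,x}^b$ and the multiplicativity $\sum_b d_bN_{c,d}^b=d_cd_d$ of the dimension: summing over $b$ first yields $\sum_b d_bN_{\overline a,x}^b=d_ad_x$, and summing over $a$ then yields $d_x\sum_{a}d_a^2=d_x\dim(\cC)$.

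The step I expect to require the most care is the bookkeeping of the conjugation on the second tensor factor, namely verifying that the barred bigon genuinely contributes $\overline{e^j}\circ\overline{e_i}$ under the coloured-pair convention rather than a Gram-matrix-twisted variant. Because both relation~(\ref{eq: ej ei = dz delta_ij}) and the canonical element $\sum_i e_i\otimes e^i$ are independent of the chosen basis, this does not affect the final scalar; the cleanest way to see it is to note that covariance of $\overline{\,\cdot\,}$ transports the bubble-popping relation from $\cC$ to its conjugate, with no need to pass to an orthonormal basis.
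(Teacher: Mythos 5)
Your proof is correct and is essentially the paper's argument: the paper pops the two bubbles by citing its (Bigon 2) and (Bigon 1) relations --- which is exactly your dual-basis computation with relation~(\ref{eq: ej ei = dz delta_ij}) inlined --- and then concludes with the identical scalar identity $\sum_{a,b}d_ad_bN_{a,b}^x=d_x\dim(\cC)$ proved the same way via Frobenius reciprocity. The conjugation bookkeeping you flag resolves as you expect: the coloured-pair convention places the dual basis elements, realized in the barred Hom-spaces via Frobenius reciprocity, into the second tensor factor, and rotation-invariance of the trace pairing (sphericality) makes that barred bubble pop to the same real scalar $d_x^{-1}\delta_{ij}$.
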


\begin{proof}
Recall that $d_a=d_{\overline a}$.
For every $x\in\Irr(\cC)$, we have
\[
\sum_{a, b} d_a d_b N_{a,b}^x 
=
\sum_{a}  d_a\big(\sum_{b} N_{\overline{a}, x}^b d_b \big)
=
\sum_{a} d_a(d_ad_x)
=
d_x\;\!\dim(\cC).
\]
Using the two Bigon relations, the left hand side of (\ref{eq: two bigons}) then
simplifies to
\[
\sum_{a,b}d_ad_bd_x^{-1}N_{a,b}^x\,\delta_{x,y}\cdot\id_x\otimes\id_{\overline x}
\,=\,
\dim(\cC)\,\delta_{x,y}\cdot\id_x\otimes\id_{\overline x}.\qedhere
\]
\end{proof}

There is an alternative proof of Lemma \ref{lem:Sumdxdy} which proceeds as follows.
We use the I=H relation to rewrite the left hand side of (\ref{eq: two bigons}) as
\[
\sum_{a,b\in\Irr(\cC)}
\begin{tikzpicture}[baseline=-.1cm]
	\draw (0,-.5) -- (0,.5) (0,0) -- (-.4,0);
	\draw (-.4,0) .. controls ++(180+50:.8cm) and ++(180-50:.8cm) .. (-.4,0);
	\draw[fill=\betacolor] (0,0) circle (.05cm);
	\draw[fill=\alphacolor] (-.4,0) circle (.05cm);	
	\node at (0,-.7) {\scriptsize{$x$}};
	\node at (-.21,.19) {\scriptsize{$b$}};
	\node at (-.95,0) {\scriptsize{$a$}};
	\node at (0,.7) {\scriptsize{$y$}};
\end{tikzpicture}
\otimes
\begin{tikzpicture}[baseline=-.1cm, xscale=-1]
	\draw (0,-.5) -- (0,.5) (0,0) -- (-.4,0);
	\draw (-.4,0) .. controls ++(180+50:.8cm) and ++(180-50:.8cm) .. (-.4,0);
	\draw[fill=\betacolor] (0,0) circle (.05cm);
	\draw[fill=\alphacolor] (-.4,0) circle (.05cm);	
	\node at (0,-.7) {\scriptsize{$\overline x$}};
	\node at (-.21,.19+.02) {\scriptsize{$\overline b$}};
	\node at (-.95,0) {\scriptsize{$\overline a$}};
	\node at (0,.73) {\scriptsize{$\overline y$}};
\end{tikzpicture}
\]
We then note that the only terms which contribute to the sum are the ones with $b = 1$, and so we are left with
\[
\sum_{a\in\Irr(\cC)}
\begin{tikzpicture}[baseline=-.1cm]
	\draw circle(.2);
	\node at (0,-.39) {\scriptsize{$a$}};
\end{tikzpicture}
\cdot
\begin{tikzpicture}[baseline=-.1cm]
	\draw circle(.2);
	\node at (0,-.4) {\scriptsize{$\overline a$}};
\end{tikzpicture}
\cdot\! 
\begin{tikzpicture}[baseline=-.1cm]
	\draw (-.2,-.6) -- (-.2,.6);
	\node at (-.2,-.8) {\scriptsize{$x$}};
\end{tikzpicture}
\otimes
\begin{tikzpicture}[baseline=-.1cm]
	\draw (-.2,-.6) -- (-.2,.6);
	\node at (-.2,-.8) {\scriptsize{$\overline x$}};
\end{tikzpicture}
\,=\,\,
\dim(\cC)\cdot\!
\begin{tikzpicture}[baseline=-.1cm]
	\draw (-.2,-.6) -- (-.2,.6);
	\node at (-.2,-.8) {\scriptsize{$x$}};
\end{tikzpicture}
\otimes
\begin{tikzpicture}[baseline=-.1cm]
	\draw (-.2,-.6) -- (-.2,.6);
	\node at (-.2,-.8) {\scriptsize{$\overline x$}};
\end{tikzpicture}
\]

\subsection{Cyclic fusion}

Given rings $R_i$ and bimodules ${}_{R_{i-1}}(M_i)_{R_i}$ for $i\in\{1,\ldots,n\}$ (indices modulo $n$), we may define the cyclic tensor product
\begin{equation}\label{eq: cyclic tensor product}
\Big[ M_1 \otimes_{R_1} M_2 \otimes_{R_2} \ldots \otimes_{R_{n-1}} M_n \otimes_{R_n} - \Big]_{\text{cyclic}}:=\,
\big(M_1 \otimes_{\mathbb Z} M_2 \otimes_{\mathbb Z} \ldots \otimes_{\mathbb Z} M_n\big)\big/\sim\,
\end{equation}
where $\sim$ is the equivalence relation generated by
\begin{align*}
\qquad m_1\ldots\otimes m_{i-1} r\otimes m_i\otimes\ldots  m_n \,&\sim\, m_1\ldots\otimes m_{i-1} \otimes r m_i\otimes\ldots m_n
&\text{for}\,\,\,\, r\in R_i\phantom{.}
\\
\text{and}\qquad\quad m_1\otimes\ldots  \otimes m_nr \,&\sim\, rm_1\otimes\ldots  \otimes m_n
&\text{for}\,\,\,\, r\in R_n.
\end{align*}
The \emph{cyclic Connes fusion}, first introduced in \cite[Appendix A]{1409.8672},
is the analogue of the above construction for Connes fusion.

Unlike the cyclic tensor product,
the cyclic fusion is not always defined.
Let us explain by an analogy why it is not always defined, and when we can expect it to be defined.
If one takes the point of view that a bimodule between rings is something that categorifies the notion of a linear map, then the expression (\ref{eq: cyclic tensor product}) categorifies the number
\[
tr(f_1\circ f_2\circ\ldots\circ f_n).
\]
Now, we like to think of bimodules between von Neumann algebras as categorifying the notion of a bounded linear map between infinite dimensional Hilbert spaces.
Given bounded linear maps $f_i:H_{i-1}\to H_i$, $i\in\{1,\ldots,n\}$ (indices modulo $n$), then the above trace is not always defined.
It is however defined if \emph{at least two of the maps are Hilbert-Schmidt}.

For bimodules between von Neumann algebras, we propose the following as a categorification of the Hilbert-Schmidt condition:

\begin{defn}\label{definition of coarse}
A bimodule ${}_AH_B$ between von Neumann algebras is \emph{coarse} if the action of the
algebraic tensor product $A\odot B^{\text{op}}$ extends to the spatial tensor product $A\,\bar\otimes\, B^{\text{op}}$.
Equivalently, a bimodule is coarse if it is a direct summand of a bimodule of the form 
\begin{equation}\label{eq: coarse bimodule}
{}_A(H_1)\otimes_{\mathbb C} (H_2)_B
\end{equation}
(and if $A$ or $B$ are factors, then any coarse bimodule is of the form (\ref{eq: coarse bimodule})).
\end{defn}

Coarse bimodules form an ideal in the sense that
if ${}_AH_B$ is coarse and ${}_BK_C$ is any bimodule, then ${}_AH\boxtimes_BK_C$ is coarse.

\begin{defn}
\label{def: cyclic fusion}
Let $R_i$ be von Neumann algebras, and let ${}_{R_{i-1}}(H_i)_{R_i}$, $i\in\{1,\ldots,n\}$, be bimodules (indices modulo $n$).
Assume that \emph{at least two} of the $H_i$ are coarse.
Then we define the \emph{cyclic fusion} by:
\begin{gather*} 
\Big[ H_1 \boxtimes_{R_1} H_2 \boxtimes_{R_2} \ldots \boxtimes_{R_{n-1}} H_n \boxtimes_{R_n} - \Big]_{\text{cyclic}}:=
\\
\Big(H_{a+1} \boxtimes_{R_{a+1}} \ldots \boxtimes_{R_{b-1}} H_b\Big)
\boxtimes_{R_a^{\text{op}}\bar\otimes R_b}
\Big(H_{b+1} \boxtimes_{R_{b+1}} \ldots \boxtimes_{R_{a-1}} H_a\Big)
\end{gather*}
(cyclic numbering), where the indices $a$ and $b$ are chosen so that at least one of the $\{H_{a+1},\ldots,H_b\}$ is coarse,
and at least one of the $\{H_{b+1},\ldots,H_a\}$ is coarse.
\end{defn}

\begin{rem}
A priori, the above description depends on the choice of locations $a$ and $b$ used to ``cut the circle'':\vspace{-.15cm}
\[
\begin{tikzpicture}
\foreach \n in {1,...,7}
{
\node[rotate=-\n*360/7+100] at (190-\n*360/7:1.55) {$H_{\n}$};
\node[rotate=-\n*360/7+100-180/7+3] at (-\n*360/7+190-180/7:1.55) {$\boxtimes_{R_{\n}}$};
}
\draw[thick, dashed, rounded corners=25]
(170+360/7:2.3) -- (170+360/7:1.9)
(170+360/7:1.25) -- (0,0) -- (14:1.25)
(14:1.9) -- (14:2.3);
\node[scale=.9] at (170+360/7:2.5) {$a$};
\node[scale=.9] at (14:2.5) {$b$};
\end{tikzpicture}
\]
In \cite[Appendix A]{1409.8672}, it was shown that when all the $H_i$ are coarse (and as long as there are at least two of them), the cyclic fusion is well defined up to canonical unitary isomorphism.
It is also well defined in the presence of non-coarse bimodules:
let the $H_{i_1},\ldots,H_{i_k}$ be coarse, and let the other bimodules be non-coarse. Then we may define the cyclic fusion  in terms of the operation described in \cite[Appendix~A]{1409.8672} as
\[
\Big[ 
\big(H_{i_1+1}\boxtimes\ldots\boxtimes H_{i_2}\big)\boxtimes_{R_{i_2}}
\big(H_{i_2+1}\boxtimes\ldots\boxtimes H_{i_3}\big)\boxtimes_{R_{i_3}}\ldots\ldots 
\boxtimes_{R_{i_k}}\big(H_{i_k+1}\boxtimes\ldots\boxtimes H_{i_1}\big)\boxtimes_{R_{i_1}}
-
\Big]_{\text{cyclic}}
\]
\end{rem}

Inspired by \cite{MR3095324}, 
we propose the following graphical calculus for morphisms between cyclic fusions.
The Hilbert space $[ H_1 \boxtimes_{R_1} \ldots\boxtimes_{R_{n-1}} H_n \boxtimes_{R_n} - ]_{\text{cyclic}}$ corresponds to an arrangement of parallel strands (labelled by the various Hilbert spaces) on the surface of a cylinder.
A string diagram on the cylinder represents a morphism:
\[
\qquad\qquad
\begin{tikzpicture}[scale=.7]
\draw[thick] (0,6) circle (2.1 and .7);
\draw[thick] (0,0) +(2.1,0) arc(0:-180:2.1 and .7);
\draw[dotted] (0,0) +(2.1,0) arc(0:180:2.1 and .7);
\draw[thick] (2.1,0) -- +(0,6);
\draw[thick] (-2.1,0) -- +(0,6);
\draw (-115:2.1 and .7)node[below, scale=.9, xshift=-2]{$H_1$} to[out=75,in=-90] (-.6,.05); 
\draw[ultra thick] (-95:2.1 and .7)node[below, scale=.9, xshift=2]{$H_2$} to[ultra thick, out=110,in=-90] (-.4,.05); 
\draw[ultra thick] (-.4,.15) to[out=90,in=-90] (.65,2.55);  
\draw[ultra thick] (.65,2.55) to[out=90,in=-90] (-.95,3.9);  
\draw[ultra thick] (.85,2.75) to[out=90,in=-105] (2.1,3.55);  
\draw[ultra thick, dashed] (2.1,3.55) to[out=105,in=-90] (1.3,4.4);
\draw[loosely dashed] (-1.1,2.1) to[out=90,in=-90] (.85,4.5); 
\draw[ultra thick, dashed] (1.2,4.5) to[out=90,in=-90] (1.3,6.5)node[above, scale=.9, yshift=1]{$K_2$};
\draw[loosely dashed] (-.6,5.9) -- (-.6,6.7)node[above, scale=.9]{$K_3$};
\draw[loosely dashed] (1,4.6) to[out=90,in=-90] (-.4,5.9);
\draw (-1.2,4.2) to[out=90,in=-80] (-2.1,4.95);  
\draw[loosely dashed] (-2.1,4.95) to[out=80,in=-80] (-.8,5.9);  
\draw[ultra thick, dashed] (-1.2,2.1) to[out=90,in=-85] (-2.1,3);
\draw[ultra thick] (-2.1,3) to[out=85,in=-80] (-1.15,3.97);  
\draw[loosely dashed] (1.15,1.15) to[bend right=25] (1,4.5); 
\draw[ultra thick, dashed] (-.7,.68)node[above, scale=.9, xshift=-10, yshift=-4]{$H_4$} to[out=90,in=-85] (-1,2.1); 
\draw[loosely dashed] (1.2,.6)node[below, scale=.9, yshift=-1]{$H_3$} -- (1.2,1.1);  
\draw (-.7,.15) to[out=90,in=-80] (-2.1,1.1);  
\draw[loosely dashed] (-2.1,1.1) to[out=80,in=-90] (-1.4,2);  
\begin{scope}[rounded corners=5pt]
\draw[fill=white, thick]   
(0,.4) +(-88:2.2 and .7) arc(-88:-120:2.2 and .7)
-- +(0,.6) arc(-120:-88:2.2 and .7) -- cycle;
\draw[fill=white, thick, dashed]
(0,.3) +(34:2.2 and .7) arc(34:75:2.2 and .7)
-- +(0,.6) arc(75:34:2.2 and .7) -- cycle;
\draw[fill=white, thick, dashed]
(0,1.25) +(105:2.2 and .7) arc(105:143:2.2 and .7)
-- +(0,.6) arc(143:105:2.2 and .7) -- cycle;
\draw[fill=white, thick]   
(0,2.95) +(-55:2.2 and .7) arc(-55:-90:2.2 and .7)
-- +(0,.6) arc(-90:-55:2.2 and .7) -- cycle;
\draw[fill=white, thick, dashed]
(0,3.6) +(40:2.2 and .7) arc(33:74:2.2 and .7)
-- +(0,.6) arc(74:33:2.2 and .7) -- cycle;
\draw[fill=white, thick, dashed]
(0,5) +(90:2.2 and .7) arc(90:123:2.2 and .7)
-- +(0,.6) arc(123:90:2.2 and .7) -- cycle;
\draw[ultra thick] (-.95,3.97) to[out=90,in=-90] (-.7,5.33)node[above, scale=.9, fill=white, inner sep=1, yshift=3]{$\,K_1\,$};  
\draw[fill=white, thick]   
(0,4.3) +(-101:2.2 and .7) arc(-101:-138:2.2 and .7)
-- +(0,.6) arc(-138:-101:2.2 and .7) -- cycle;
\end{scope}
\node[right] at (2.5,3.13) {$:\,\Big[ H_1 \boxtimes H_2 \boxtimes H_3 \boxtimes H_4 \boxtimes - \Big]_{\text{cyclic}}$};
\node[right] at (4,1.9) {$\to\Big[ K_1 \boxtimes K_2 \boxtimes K_3 \boxtimes - \Big]_{\text{cyclic}}$};
\end{tikzpicture}
\]
We draw thick strands for the coarse bimodules, and thin strands for the bimodules which are not coarse.
For a morphism to be well defined, any horizontal plane intersecting the cylinder should cross at least two thick strands (and if the plane crosses through the middle of a coupon which is connected to at least one thick strand, then this coupon counts as \emph{one} thick strand).

Later on in this paper, we will combine the above cylinder graphical calculus with 
the coloured dots notation from (\ref{eq: coloured dots notation}).


\section{Bicommutant categories}
\label{sec: Bicommutant categories}

Let $R$ be a hyperfinite factor, and let $\Bim(R)$ be the category of $R$-$R$-bimodules whose underlying Hilbert space is separable.
The latter is a bi-involutive tensor category under the operation of Connes fusion, as discussed in Section \ref{sec:Connes fusion}.

Recall that a bi-involutve tensor functor between two bi-involutve tensor categories $\cC$ and $\cD$  is a  quadruple $(F,\mu,i,\upsilon)$, where $F:\cC\to \cD$ is a functor, and 
\[
\mu_{x,y}:F(x)\otimes_{\scriptscriptstyle\cD} F(y)\to F(x\otimes_{\scriptscriptstyle\cC} y),\quad
i:1_\cD\to F(1_\cC),\quad
\upsilon_x:F({\overline x}^{\scriptscriptstyle\cC})\to \overline{F(x)}^{\scriptscriptstyle\cD}
\]
are unitary isomorphisms.

\begin{nota}
Given a bi-involutive tensor category $\cC$ and a bi-involutive tensor functor $\cC\to \Bim(R)$,
we will write
\[
\cC' := \cZ^*_{\Bim(R)}(\cC)
\]
for the unitary commutant of $\cC$ in $\Bim(R)$.
\end{nota}

There is an obvious bi-involutive tensor functor $\cC'\to \Bim(R)$ given by forgetting the half-braiding.
It therefore makes sense to consider the commutant of the commutant.
There is also an `inclusion' functor $\iota:\cC\to \cC''$ from the category to its bicommutant.
It sends an object $X\in\cC$ to the object $(X,e'_X)\in\cC''$ with half-braiding 
given by $e'_{X,(Y,e_Y)}:=e_{Y,X}^{-1}$
for $(Y,e_Y)\in\cC'$.
The coherence data $\mu$, $i$, $\upsilon$ for $\iota$ are all identity morphisms.

\begin{defn}\label{def: bicommutant category}
A \emph{bicommutant category} is a bi-involutive tensor category $\cC$ for which there exists a hyperfinite factor $R$ and a bi-involutive tensor functor $\cC\to \Bim(R)$, such that the `inclusion' functor $\iota:\cC\to \cC''$ is an equivalence.
\end{defn}

If a bi-involutive tensor functor $\alpha:\cC\to \Bim(R)$ is such that the corresponding `inclusion' functor $\iota$ is an equivalence, then we say that \emph{$\alpha$ exhibits $\cC$ as a bicommutant category}.

\subsection{Representing tensor categories in \texorpdfstring{$\Bim(R)$}{Bim(R)}}

A representation of a $*$-algebra $A$ on a Hilbert space $H$ is a $*$-algebra homomorphism $A\to B(H)$.
By analogy, we define a \emph{representation} of a bi-involutive tensor category $\cC$ to be a bi-involutive tensor functor
$\cC\to\Bim(R)$, for some von Neumann algebra $R$. 
One can alternatively describe this as an action of $\cC$ on the category $\mathrm{Mod}(R)$ of left $R$-modules.

\begin{defn}
\label{defn:Representation}
A morphism between two representations $\alpha_1:\cC\to\Bim(R_1)$ and $\alpha_2:\cC\to\Bim(R_2)$ of $\cC$ consists of an $R_2$-$R_1$-bimodule $\Phi$, along with unitary natural isomorphisms
\[
\phi_X: \Phi \boxtimes_{R_1} \alpha_1(X) \to \alpha_2(X)\boxtimes_{R_2} \Phi
\]
for every $X\in \cC$, subject to the coherence condition
\[
\xymatrix@C=1.4cm{
\Phi \boxtimes_{R_1} \alpha_1(X) \boxtimes_{R_1} \alpha_1(Y) \ar[d]^{\id{\scriptscriptstyle\boxtimes}\mu_1}
\ar[r]^{\phi_X{\scriptscriptstyle\boxtimes}\id}
&
\alpha_2(X)\boxtimes_{R_2} \Phi \boxtimes_{R_1}
\alpha_1(Y)
\ar[r]^{\id{\scriptscriptstyle\boxtimes}\phi_Y}
&
\alpha_2(X)\boxtimes_{R_2} \alpha_2(Y)\boxtimes_{R_2} \Phi
\ar[d]^{\mu_2{\scriptscriptstyle\boxtimes}\id}
\\
\Phi\boxtimes_{R_1}\alpha_1(X\otimes Y)
\ar[rr]^{\phi_{X\otimes Y}}
&&
\alpha_2(X\otimes Y)\boxtimes_{R_2} \Phi
.}
\]
A morphism $(\Phi,\phi)$ between two representations is an equivalence if the bimodule $\Phi$ is invertible, or equivalently if the induced map $\mathrm{Mod}(R_1)\to \mathrm{Mod}(R_2)$ is an equivalence of categories.
\end{defn}

A representation $\cC\to\Bim(R)$ is called \emph{fully faithful} if non-isomorphic objects of $\cC$ remain non-isomorphic in $\Bim(R)$, and if simple objects of $\cC$ remain simple in $\Bim(R)$ (this agrees with the usual notion of fully faithfulness from category theory).
In the next theorem, we will see that if we restrict the von Neumann algebra $R$ to be a hyperfinite factor which is not of type $\rm I$, then every unitary fusion category admits a fully faithful representation in $\Bim(R)$.
We begin with the following well known lemma:

\begin{lem}\label{lem:R(x)II_1=R}
Let $R$ be a hyperfinite factor which is not of type ${\rm I}$, and let $R_{{\rm II}_1}$ be a hyperfinite ${\rm II}_1$-factor.
Then $R\,\bar\otimes\, R_{{\rm II}_1}\cong R$.
\end{lem}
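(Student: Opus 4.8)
The plan is to invoke the classification of hyperfinite (equivalently, injective) factors, treating $R$ according to its Murray--von Neumann/Connes type. First I would record two structural facts about the target. The spatial tensor product $R\,\bar\otimes\, R_{{\rm II}_1}$ is again a factor, since by the commutation theorem its center is $Z(R)\,\bar\otimes\, Z(R_{{\rm II}_1})=\C$; and it is again hyperfinite, since injectivity is preserved under spatial tensor products and $R_{{\rm II}_1}$ is injective. The engine of the whole argument is the self-absorption property $R_{{\rm II}_1}\,\bar\otimes\, R_{{\rm II}_1}\cong R_{{\rm II}_1}$, which is part of the uniqueness of the hyperfinite ${\rm II}_1$ factor.

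The semifinite cases are then immediate. If $R$ is of type ${\rm II}_1$ then $R\cong R_{{\rm II}_1}$, whence $R\,\bar\otimes\, R_{{\rm II}_1}\cong R_{{\rm II}_1}\,\bar\otimes\, R_{{\rm II}_1}\cong R_{{\rm II}_1}\cong R$. If $R$ is of type ${\rm II}_\infty$ then $R\cong R_{{\rm II}_1}\,\bar\otimes\, B(H)$, and associativity together with self-absorption gives $R\,\bar\otimes\, R_{{\rm II}_1}\cong (R_{{\rm II}_1}\,\bar\otimes\, R_{{\rm II}_1})\,\bar\otimes\, B(H)\cong R_{{\rm II}_1}\,\bar\otimes\, B(H)\cong R$.

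The heart of the matter is the type ${\rm III}$ case, where I would argue that tensoring with $R_{{\rm II}_1}$ leaves the flow of weights unchanged. Fix a faithful normal semifinite weight $\phi$ on $R$ and let $\tau$ be the trace on $R_{{\rm II}_1}$. Since $\tau$ is tracial, $\sigma^\tau=\id$, so the modular flow of the weight $\phi\otimes\tau$ on $R\,\bar\otimes\, R_{{\rm II}_1}$ is $\sigma^{\phi\otimes\tau}_t=\sigma^\phi_t\otimes\id$. Because the implementing unitaries of $\sigma^\phi_t\otimes\id$ commute with $1\otimes R_{{\rm II}_1}$, the core of $R\,\bar\otimes\, R_{{\rm II}_1}$ splits as
\[
(R\,\bar\otimes\, R_{{\rm II}_1})\rtimes_{\sigma^{\phi\otimes\tau}}\R\;\cong\;(R\rtimes_{\sigma^\phi}\R)\,\bar\otimes\, R_{{\rm II}_1},
\]
whose center is $Z(R\rtimes_{\sigma^\phi}\R)\,\bar\otimes\,\C=Z(R\rtimes_{\sigma^\phi}\R)$, on which the dual (trace-scaling) action restricts to the very same flow as for $R$ itself. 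Hence $R$ and $R\,\bar\otimes\, R_{{\rm II}_1}$ have identical flows of weights; in particular they share the same Connes invariant and are of the same type ${\rm III}_\lambda$. Since both are hyperfinite, the Connes--Takesaki classification of hyperfinite type ${\rm III}_0$ factors by their flow of weights, together with the uniqueness of the hyperfinite ${\rm III}_\lambda$ factor for $0<\lambda<1$ (Connes) and of the hyperfinite ${\rm III}_1$ factor (Haagerup), yields $R\,\bar\otimes\, R_{{\rm II}_1}\cong R$.

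The step I expect to be the main obstacle is the type ${\rm III}_0$ subcase: there uniqueness genuinely fails, so one cannot reduce to matching a single numerical invariant and must instead control the full flow-of-weights invariant. The crux is the core computation displayed above, whose only real subtlety is the identification of the modular flow of $\phi\otimes\tau$, which is made transparent by the vanishing $\sigma^\tau=\id$ of the modular flow of the trace.
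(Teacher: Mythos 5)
Your proof is correct and follows essentially the same route as the paper: dispatch the semifinite cases via uniqueness of the hyperfinite ${\rm II}_1$ and ${\rm II}_\infty$ factors, then show the flow of weights is unchanged by tensoring with $R_{{\rm II}_1}$ and invoke the Connes--Haagerup--Krieger classification. The only (cosmetic) difference is that you compute $S(R\,\bar\otimes\,R_{{\rm II}_1})$ directly from the splitting of the core using $\sigma^\tau=\id$, whereas the paper cites the general Connes--Takesaki formula $S(M_1\,\bar\otimes\,M_2)\cong S(M_1)\wedge_{\mathbb R}S(M_2)$ together with the fact that $L^\infty(\mathbb R)$ is a unit for $\wedge_{\mathbb R}$.
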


\begin{proof}
If $R$ is either of type ${\rm II}_1$ or ${\rm II}_\infty$, then the result follows from the uniqueness of the hyperfinite ${\rm II}_1$ and ${\rm II}_\infty$ factors \cite[Thm.\,XIV]{MR0009096}. 
We may therefore assume that $R$ is of type~${\rm III}$.

Let $\sigma:\bbR \to\mathrm{Aut}(R)$ be the modular flow of $R$.
The \emph{flow of weights} \cite{MR480760} 
is the dual action of $\bbR$ on the von Neumann algebra $S(R):=Z(R\rtimes_\sigma \bbR)$.\footnote{Unlike the modular flow, which depends on a choice of state, the crossed product $R\rtimes_\sigma \bbR$ does not depend of any choices, up to canonical isomorphism.}
By the work of Connes, Haagerup, and Krieger 
\cite{MR0454659, 
MR880070, 
MR0415341} 
(see also \cite[Chapt.\,XVIII]{MR1943007}
), the map $R\mapsto S(R)$ establishes a bijective correspondence between isomorphism classes of hyperfinite type ${\rm III}$ factors, and isomorphism types of ergodic actions of $\bbR$ on abelian von Neumann algebras, provided one excludes the standard action of $\bbR$ on $L^\infty(\bbR)$.
(The latter is the flow of weights of the hyperfinite ${\rm II}_1$ and ${\rm II}_\infty$ factors.)

Given abelian von Neumann algebras $Z_1$ and $Z_2$ with actions of $\bbR$, we write $Z_1\wedge_\bbR Z_2:=(Z_1\,\bar\otimes\,Z_2)^{\bbR_{\rm diag}}$ for the fixed-point algebra with respect to $\bbR_{\rm diag}:=\{(t,-t):t\in \bbR\}\subset\bbR^2$,
along with the residual $\bbR^2/\bbR_{\rm diag}$ action.
The algebra $L^\infty(\bbR)$ with its standard $\bbR$ action is a unit for that operation: $Z\wedge_\bbR L^\infty(\bbR) = Z$.
Now, by \cite[Cor.\,II.6.8]{MR480760}, 
given two factors $M_1$ and $M_2$, there is a canonical isomorphism
$S(M_1\,\bar\otimes\,M_2) \cong S(M_1) \wedge_\bbR S(M_2)$.\footnote{The result in \cite{MR480760} is only stated for type ${\rm III}$ factors, but the proof never uses the type ${\rm III}$ assumption.}
It follows that
\[
S(R\,\bar\otimes\, R_{{\rm II}_1}) \,\cong\, S(R) \wedge_\bbR S(R_{{\rm II}_1})
\,\cong\, S(R) \wedge_\bbR L^\infty(\bbR) \,\cong\, S(R).
\]
Using the Connes--Haagerup--Krieger classification theorem of hyperfinite type ${\rm III}$ factors, it follows that $R\,\bar\otimes\, R_{{\rm II}_1}\cong R$.
\end{proof}

\begin{thm}
\label{thm: exists faithful rep}
Let $R$ be a hyperfinite 
factor which is not of type ${\rm I}$.
Then every unitary fusion category $\cC$ admits a fully faithful representation $\cC\to\Bim(R)$.
\end{thm}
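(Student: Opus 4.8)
\noindent
\emph{Proof proposal.}
The plan is to first settle the statement for the hyperfinite ${\rm II}_1$ factor, and then bootstrap to an arbitrary non-type-${\rm I}$ hyperfinite factor by amplifying along the isomorphism $R\,\bar\otimes\,R_{{\rm II}_1}\cong R$ supplied by Lemma~\ref{lem:R(x)II_1=R}.

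For the first step I would invoke the subfactor-theoretic realization of unitary fusion categories due to Popa \cite{MR1334479}: passing from $\cC$ to an associated standard invariant and realizing the latter by a finite-depth hyperfinite subfactor, one obtains a fully faithful bi-involutive tensor functor $\alpha_0:\cC\to\Bim(R_{{\rm II}_1})$ landing in the full subcategory $\Bim^\circ(R_{{\rm II}_1})$ of dualizable bimodules, with the canonical conjugation on $\cC$ matching the one on $\Bim^\circ(R_{{\rm II}_1})$. This is the deep input, and is where essentially all of the hard analysis resides.

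Given $\alpha_0$, I would compose it with the external-tensor-product functor
\[
\Bim(R_{{\rm II}_1})\longrightarrow \Bim(R\,\bar\otimes\,R_{{\rm II}_1}),\qquad K\longmapsto L^2R\otimes K,
\]
where $R\,\bar\otimes\,R_{{\rm II}_1}$ acts on the Hilbert-space tensor product $L^2R\otimes K$ through the two given actions, one factor at a time. This is a bi-involutive tensor functor: it carries Connes fusion to Connes fusion via the canonical unitary $(L^2R\otimes H)\boxtimes_{R\bar\otimes R_{{\rm II}_1}}(L^2R\otimes K)\cong(L^2R\boxtimes_R L^2R)\otimes(H\boxtimes_{R_{{\rm II}_1}}K)\cong L^2R\otimes(H\boxtimes_{R_{{\rm II}_1}}K)$, it is compatible with conjugation because $\overline{L^2R}\cong L^2R$, and its coherences $\mu$, $i$, $\upsilon$ are inherited from the unit constraints of $\Bim(R)$ together with those of $\alpha_0$. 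It is moreover fully faithful at the level of hom-spaces, since
\[
\Hom_{R\bar\otimes R_{{\rm II}_1}}(L^2R\otimes H,\,L^2R\otimes K)\;\cong\;\End_{R\text{-}R}(L^2R)\otimes\Hom_{R_{{\rm II}_1}}(H,K)\;=\;\Hom_{R_{{\rm II}_1}}(H,K),
\]
the key point being that $L^2R$ is the unit bimodule, so that $\End_{R\text{-}R}(L^2R)=R\cap R'=Z(R)=\C$ because $R$ is a factor. In particular simple objects remain simple and non-isomorphic objects remain non-isomorphic, so the composite $\cC\to\Bim(R\,\bar\otimes\,R_{{\rm II}_1})$ is a fully faithful representation. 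Finally, the isomorphism $R\,\bar\otimes\,R_{{\rm II}_1}\cong R$ of Lemma~\ref{lem:R(x)II_1=R} induces a bi-involutive tensor equivalence $\Bim(R\,\bar\otimes\,R_{{\rm II}_1})\simeq\Bim(R)$, and transporting the representation along it yields the desired fully faithful $\cC\to\Bim(R)$.

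The main obstacle is the first step: exhibiting even a single faithful embedding of $\cC$ into the bimodules of a hyperfinite factor is the substantive content, and it rests on Popa's reconstruction theorem. The amplification and transport in the remaining steps are formal; the only bookkeeping needed is to check separability of the underlying Hilbert spaces (automatic, since $R$ and $R_{{\rm II}_1}$ are separable and every object of $\cC$ has finite dimension) and to verify the tensor and involution coherences, which is routine.
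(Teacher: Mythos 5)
Your proposal is correct and follows essentially the same route as the paper: invoke Popa's theorem to embed $\cC$ fully faithfully into $\Bim(R_{{\rm II}_1})$, then compose with the amplification $L^2R\otimes_\C-:\Bim(R_{{\rm II}_1})\to\Bim(R\,\bar\otimes\,R_{{\rm II}_1})\cong\Bim(R)$ using Lemma~\ref{lem:R(x)II_1=R}. The extra detail you supply on why the amplification functor is fully faithful and bi-involutive is left implicit in the paper but is accurate.
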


\begin{proof}
Let $R_{{\rm II}_1}$ be a hyperfinite ${\rm II}_1$ factor.
By the work of Popa \cite[Thm.\,3.1]{MR1334479} (see also \cite[Thm.\,4.1]{MR3028581}),
there exists a fully faithful representation
\[
\cC\hookrightarrow\Bim(R_{{\rm II}_1}).
\]
Let now $R$ be an arbitrary hyperfinite factor which is not of type ${\rm I}$.
By Lemma \ref{lem:R(x)II_1=R}, we have $R\,\bar\otimes\, R_{{\rm II}_1}\cong R$.
We may therefore compose the above embedding with the map
\[
\Bim(R_{{\rm II}_1})\stackrel{L^2R\otimes_\C-}{\lhook\joinrel\relbar\joinrel\relbar\joinrel\relbar\joinrel\longrightarrow} \Bim(R \,\bar\otimes\, R_{{\rm II}_1})\cong \Bim(R).
\qedhere
\]
\end{proof}

The above result raises the question of uniqueness.
We believe that the following conjecture should follow straightforwardly from Popa's uniqueness theorems for hyperfinite finite depth subfactors of types ${\rm II}_1$ \cite{MR1055708,MR1278111} and ${\rm III}_1$ \cite{MR1339767}.
However, we do not attempt to prove it here as it would take us too far afield.

\begin{conj}
Let $\cC$ be a unitary fusion category,
and let $R$ be a hyperfinite factor which is either of type ${\rm II}_1$ or ${\rm III}_1$.
Then any two fully faithful representations $\cC\to \Bim(R)$ are equivalent in the sense of Definition \ref{defn:Representation}. 
\end{conj}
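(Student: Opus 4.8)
The plan is to reduce the statement to Popa's classification of strongly amenable subfactors, via the standard dictionary between representations of a unitary fusion category and finite-depth hyperfinite subfactors. Throughout, the type ${\rm II}_1$ and type ${\rm III}_1$ hypotheses serve exactly to place us in the range where Popa's uniqueness theorems apply.

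First I would attach a subfactor to each representation. Fix once and for all a generating object $X\in\cC$ — for instance $X=\bigoplus_{c\in\Irr(\cC)}c$, so that every simple object occurs in some tensor power of $X$ — together with the canonical self-duality and Frobenius data supplied by the bi-involutive structure of $\cC$ (Section~\ref{sec: Unitary fusion categories}). Applying a fully faithful representation $\alpha_i:\cC\to\Bim(R)$ to this data produces a symmetric bifinite bimodule $\alpha_i(X)\in\Bim(R)$, equivalently a Q-system, and hence a finite-index extension $R\subset P_i$ of hyperfinite factors with $\alpha_i(X)$ as its underlying $R$-$R$-bimodule. Because $\alpha_i$ is a bi-involutive tensor functor, the $R$-$R$-bimodules occurring in the standard invariant of $R\subset P_i$ are precisely the images $\{\alpha_i(c)\}_{c\in\Irr(\cC)}$, and full faithfulness makes this a bijection on isomorphism classes preserving simplicity. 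Consequently $R\subset P_i$ has finite depth, and its standard invariant is canonically isomorphic to the combinatorial datum extracted from the abstract pair $(\cC,X)$ — in particular the same datum for $i=1$ and $i=2$. This is the inverse direction of the construction already used in Theorem~\ref{thm: exists faithful rep}.

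Second I would invoke Popa's classification. Finite depth implies strong amenability, so by Popa's uniqueness theorem for strongly amenable subfactors of the hyperfinite ${\rm II}_1$ factor \cite{MR1055708,MR1278111}, and its type ${\rm III}_1$ counterpart \cite{MR1339767}, the two inclusions $R\subset P_1$ and $R\subset P_2$, having the same standard invariant, are conjugate: there is an isomorphism $\Psi:P_1\to P_2$ with $\Psi(R)=R$. Restricting gives an automorphism $\theta:=\Psi|_R\in\Aut(R)$ that carries the standard invariant of the first inclusion onto that of the second, and in particular takes each $\alpha_1(c)$ to $\alpha_2(c)$ after twisting by $\theta$. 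Translating back into Definition~\ref{defn:Representation}, the automorphism $\theta$ determines an invertible $R$-$R$-bimodule $\Phi:=L^2R$ with left action precomposed by $\theta$ (invertibility is Lemma~\ref{lem: invertible bimodules}), and the conjugacy supplies a unitary $\Phi\boxtimes_R\alpha_1(X)\to\alpha_2(X)\boxtimes_R\Phi$. Since $X$ generates $\cC$ and the required maps $\phi_Y$ must be compatible with the tensorators $\mu_i$, this unitary extends uniquely to a natural family $\phi_Y:\Phi\boxtimes_R\alpha_1(Y)\to\alpha_2(Y)\boxtimes_R\Phi$ satisfying the coherence square, exhibiting $(\Phi,\phi)$ as an equivalence $\alpha_1\simeq\alpha_2$.

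The main obstacle is the first step: one must make the dictionary between fully faithful representations and finite-depth subfactors precise enough that the two standard invariants agree \emph{on the nose}, as identical standard-invariant (planar-algebraic) data, rather than merely up to an abstract tensor equivalence of the even fusion categories, since Popa's theorem requires an honest isomorphism of standard invariants. Pinning this down — and then verifying that the single automorphism $\theta$ produced from the generator really intertwines the entire tensor-functorial structure of $\alpha_1$ and $\alpha_2$ on all objects and all coherences $\mu,\upsilon$, not just on $X$ — is the delicate part, and is presumably why the authors record the statement only as a conjecture rather than carrying out the argument here.
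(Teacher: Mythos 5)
You should be aware that the paper does not prove this statement: it is recorded explicitly as a conjecture, accompanied only by the remark that it ``should follow straightforwardly from Popa's uniqueness theorems for hyperfinite finite depth subfactors,'' and the authors decline to carry out the argument. Your proposal is therefore not competing with a proof in the paper; it is an expansion of exactly the one-sentence strategy the authors indicate. Judged as a proof, however, it is a program rather than an argument, and the two difficulties you yourself flag in your final paragraph are genuine gaps, not routine verifications. First, the dictionary in your opening step is not actually pinned down: the object $X=\bigoplus_{c\in\Irr(\cC)}c$ is self-dual but carries no canonical Q-system (algebra) structure, so it cannot directly be ``the underlying $R$-$R$-bimodule of a finite-index extension $R\subset P_i$''; one must instead pass to something like $R\subset\End(\alpha_i(X)_R)$ or equip $\overline{X}\otimes X$ with its canonical Frobenius structure, and then prove that the resulting standard invariant, as a concrete planar-algebraic datum and not merely as an abstract tensor category with generator, is the same for $i=1,2$. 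That the two subcategories of $\Bim(R)$ generated by $\alpha_1(X)$ and $\alpha_2(X)$ are unitarily tensor equivalent (both being equivalent to the subcategory of $\cC$ generated by $X$) does not by itself hand you an identification of standard invariants in the form Popa's theorem consumes; making this precise is closely related to the open Question the paper raises in Section~2.2 about promoting tensor equivalences of unitary fusion categories to dagger tensor equivalences.

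Second, even granting the conjugacy $\Psi:P_1\to P_2$ with $\Psi(R)=R$, the passage from a single unitary $\Phi\boxtimes_R\alpha_1(X)\to\alpha_2(X)\boxtimes_R\Phi$ to the full coherent family $(\phi_Y)_{Y\in\cC}$ of Definition~\ref{defn:Representation} is asserted (``extends uniquely'') but not argued. The issue is that $\alpha_1$ and $\alpha_2$ are tensor functors with tensorators $\mu_1,\mu_2$, and compatibility of $\phi$ with these on all of $\cC$ is a coherence statement that does not follow formally from agreement on one generating object; it requires extracting from Popa's conjugacy an isomorphism of the entire standard invariants that matches the transported multiplication data, and then a (solvable, but nontrivial) descent argument to define $\phi_Y$ on summands of tensor powers of $X$ consistently. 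None of this is implausible --- it is presumably how a proof would go --- but as written the proposal defers precisely the points where the work lies, which is the same reason the authors left the statement as a conjecture.
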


\section{The commutant of a fusion category} \label{sec: The commutant of a fusion category}

Throughout this section, we fix a factor $R$ (not necessarily hyperfinite), a unitary fusion category $\cC$, and a representation $\cC\to \Bim(R)$.
To simplify the notation, we will assume that the representation is fully faithful and identify $\cC$ with its image in $\Bim(R)$,
but the fully faithfulness condition is actually not required for the results of this section. It will however be needed later on, in Section~\ref{sec:Absorbing objects}.

\subsection{Constructing objects in \texorpdfstring{$\cC'$}{C'}}
\label{sec: functorial construction of objects in C'}

The goal of this section is to construct a functor 
\[
\quad
\underline\Delta:\Bim(R)\to \cC'
\qquad
\underline\Delta(\Lambda)=(\Delta(\Lambda),e_{\Delta(\Lambda)}).
\]
For simplicity of notation, we will denote the underlying object $\Delta(\Lambda)$ of $\underline\Delta(\Lambda)$ simply by $\Delta$. It is given by
\begin{equation} \label{eq: def of Delta}
\Delta\, := \bigoplus_{x\in \Irr(\cC)} x\boxtimes \Lambda \boxtimes \overline x\,.
\end{equation}
Note that this object does not depend, up to canonical unitary isomorphism, on the choice of representatives of the simple objects of $\cC$.

For $a\in \cC$ an irreducible object, the half-braiding
$e_{\Delta,a}:\Delta\boxtimes a\to a\boxtimes \Delta$
is given by
\begin{equation}
\label{eq:HalfBraiding}
e_{\Delta, a} 
\,:=\,
\sum_{x,y\in \Irr(\cC)} \!
\sqrt{d_a^{-1}}\,
\begin{tikzpicture}[baseline=-.1cm]
	\draw[very thick] (0,-.6) -- (0,.6);
	\draw (-.2,-.6) -- (-.2,.6);
	\draw (.2,-.6) -- (.2,.6);
	\draw (-.6,.6) -- (-.2,.2);
	\draw (.6,-.6) -- (.2,-.2);
	\draw[fill=\betacolor] (-.2,.2) circle (.05cm);
	\draw[fill=\betacolor] (.2,-.2) circle (.05cm);
	\node at (0,-.8) {\scriptsize{$\Lambda$}};
	\node at (-.2,-.84) {\scriptsize{$x$}};
	\node at (.23,-.8) {\scriptsize{$\overline x$}};
	\node at (.6,-.84) {\scriptsize{$a$}};
	\node at (0,.8) {\scriptsize{$\Lambda$}};
	\node at (-.2,.77) {\scriptsize{$y$}};
	\node at (.23,.8) {\scriptsize{$\overline y$}};
	\node at (-.6,.77) {\scriptsize{$a$}};
\end{tikzpicture}
\end{equation}
where the projection $\Delta\boxtimes a\to x\boxtimes \Lambda\boxtimes \overline x\boxtimes a$ and inclusion $a\boxtimes y\boxtimes \Lambda\boxtimes \overline y\to a\boxtimes\Delta$ are implicit in the notation.
The half-braiding is natural with respect to morphisms $a\to a'$ between simple objects, and we extend it by additivity to all objects.

\begin{prop}
\label{Prop:eDeltaHalfBraiding}
$e_{\Delta}=(e_{\Delta,a}:\Delta\boxtimes a\to a\boxtimes \Delta)_{a\in \cC}$ is a unitary half-braiding.
\end{prop}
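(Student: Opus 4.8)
The plan is to verify the three conditions that make $e_\Delta$ a unitary half-braiding: naturality of $a\mapsto e_{\Delta,a}$, unitarity of each component, and the hexagon identity from the definition of the commutant. Naturality with respect to a morphism $a\to a'$ of simple objects is essentially built into (\ref{eq:HalfBraiding}), since the trivalent couplings appearing there are the canonical basis-independent elements and therefore slide past any morphism; extending by additivity then makes $e_{\Delta,-}$ a natural family on all of $\cC$. The substance of the proposition is thus the unitarity of each $e_{\Delta,a}$ and the hexagon.

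For unitarity I would first write down the adjoint $e_{\Delta,a}^*$, obtained by reflecting the diagram (\ref{eq:HalfBraiding}) in a horizontal axis and taking adjoints of the paired couplings, keeping the normalization $\sqrt{d_a^{-1}}$ (note $d_a\in\R_{\ge0}$, so no conjugation issue arises). Stacking $e_{\Delta,a}^*\circ e_{\Delta,a}$ then produces, on each summand $x\boxtimes\Lambda\boxtimes\overline x\boxtimes a$, a closed $\cC$-picture in which the two nodes on the intermediate strand $y$, together with the returning $a$-strand, form bigons. Applying the relation (\ref{eq: ej ei = dz delta_ij}), equivalently the Bigon relations, collapses these bigons and forces the two outer summation indices to coincide, after which the Fusion relation resums over the intermediate simple object. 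A short dimension count shows that the factor $d_a^{-1}$ cancels exactly against the $\sqrt{d_\bullet d_\bullet d_\bullet}$ weights carried by the canonical elements, leaving $\id_{\Delta\boxtimes a}$; the composite $e_{\Delta,a}\circ e_{\Delta,a}^*$ reduces to $\id_{a\boxtimes\Delta}$ by the mirror-image argument, so each $e_{\Delta,a}$ is unitary.

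The hexagon is the heart of the matter. For simple $a,b$ it is equivalent to
\[
e_{\Delta,a\otimes b}=(\mu_{a,b}\otimes\id_\Delta)\circ(\id_a\otimes e_{\Delta,b})\circ(e_{\Delta,a}\otimes\id_b)\circ(\id_\Delta\otimes\mu_{a,b}^{-1}),
\]
where $\mu_{a,b}\colon a\boxtimes b\xrightarrow{\simeq}a\otimes b=\bigoplus_c N_{a,b}^c\,c$ is the comparison isomorphism and the left-hand side is read through the additive extension over the simple summands $c$. I would draw the right-hand side explicitly: moving first $a$ and then $b$ across $\Delta$ deposits, on each $\cC$-leg of $\Delta$, two stacked trivalent vertices (one carrying $a$, one carrying $b$) joined by an intermediate channel. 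The I=H relation re-associates each such pair, and the Fusion relation resums the intermediate channel, fusing the two vertices into a single vertex whose third leg runs over the channels $c\subset a\otimes b$; matching these against $e_{\Delta,c}$ recovers the additive expression for $e_{\Delta,a\otimes b}$. The delicate point, and the step where I expect to spend the most effort, is the constant bookkeeping: one must check that $\sqrt{d_a^{-1}}\,\sqrt{d_b^{-1}}$, together with the $\sqrt{d_\bullet d_\bullet d_\bullet}$ weights of the two canonical elements and the multiplicities produced by Fusion, recombine precisely into the single factor $\sqrt{d_c^{-1}}$ attached to each $e_{\Delta,c}$, with every occurrence of $\mu_{a,b}$ correctly accounted for. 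This is exactly the computation that pins down the normalization in (\ref{eq:HalfBraiding}).
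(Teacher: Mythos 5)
Your proposal is correct and follows essentially the same route as the paper's proof: naturality holds by construction, unitarity of each $e_{\Delta,a}$ is obtained from the Bigon relations (which produce the $\delta$ forcing the outer indices to coincide) followed by the Fusion relation, and the hexagon is exactly the Fusion plus I=H computation you describe, merely read in the opposite direction (the paper expands $e_{\Delta,a\boxtimes b}$ through the channels $c\subset a\otimes b$ with coefficient $\sqrt{d_cd_a^{-1}d_b^{-1}}$ and then applies I=H to split into the two successive half-braidings). The normalization bookkeeping you flag as the delicate point is indeed where the $\sqrt{d_a^{-1}}$ in (\ref{eq:HalfBraiding}) gets pinned down, and it works out exactly as you anticipate.
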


\begin{proof}
The maps $e_{\Delta,a}$ are natural in $a$ by construction.
To see that $e_{\Delta,a}$ is unitary, we use the Bigon and Fusion relations:
$$
e_{\Delta,a}^*\circ e_{\Delta,a}=\sum_{x,y,z\in \Irr(\cC)}\!
d_a^{-1}
\begin{tikzpicture}[baseline=-.1cm]
	\coordinate (b) at (-.2,.4);
	\coordinate (c) at (-.2,-.4);
	\coordinate (d) at (.2,-.6);
	\coordinate (e) at (.2,.6);
	\draw[very thick] (0,-1) -- (0,1);
	\draw (-.2,-1) -- (-.2,1);
	\draw (.2,-1) -- (.2,1);
	\draw (c) .. controls ++(135:.5cm) and ++(225:.5cm) .. (b);
	\draw (d) -- (.6,-1);
	\draw (e) -- (.6,1);
	\draw[fill=\alphacolor] (b) circle (.05cm);
	\draw[fill=\betacolor] (c) circle (.05cm);
	\draw[fill=\betacolor] (d) circle (.05cm);
	\draw[fill=\alphacolor] (e) circle (.05cm);
	\node at (0,-1.2) {\scriptsize{$\Lambda$}};
	\node at (-.2,-1.24) {\scriptsize{$x$}};
	\node at (.23,-1.2) {\scriptsize{$\overline x$}};
	\node at (.6,-1.24) {\scriptsize{$a$}};
	\node at (0,1.2) {\scriptsize{$\Lambda$}};
	\node at (-.2,1.17) {\scriptsize{$z$}};
	\node at (.23,1.2) {\scriptsize{$\overline z$}};
	\node at (.6,1.17) {\scriptsize{$a$}};
	\node at (-.6,0) {\scriptsize{$a$}};
	\node at (-.325,0) {\scriptsize{$y$}};
	\node at (.35,0) {\scriptsize{$\overline y$}};
\end{tikzpicture}
=
\sum_{x,y,z\in \Irr(\cC)}\!
\sqrt{d_yd_x^{-1}d_a^{-1}}\cdot\delta_{x,z}
\begin{tikzpicture}[baseline=-.1cm]
	\coordinate (d) at (.2,-.6);
	\coordinate (e) at (.2,.6);
	\draw[very thick] (0,-1) -- (0,1);
	\draw (-.2,-1) -- (-.2,1);
	\draw (.2,-1) -- (.2,1);
	\draw (d) -- (.6,-1);
	\draw (e) -- (.6,1);
	\draw[fill=\betacolor] (d) circle (.05cm);
	\draw[fill=\betacolor] (e) circle (.05cm);
	\node at (0,-1.2) {\scriptsize{$\Lambda$}};
	\node at (-.2,-1.24) {\scriptsize{$x$}};
	\node at (.23,-1.2) {\scriptsize{$\overline x$}};
	\node at (.6,-1.24) {\scriptsize{$a$}};
	\node at (0,1.2) {\scriptsize{$\Lambda$}};
	\node at (-.2,1.17) {\scriptsize{$x$}};
	\node at (.23,1.2) {\scriptsize{$\overline x$}};
	\node at (.6,1.17) {\scriptsize{$a$}};
	\node at (.35,0) {\scriptsize{$\overline y$}};
\end{tikzpicture}
=
\sum_{x\in \Irr(\cC)}
\begin{tikzpicture}[baseline=-.1cm]
	\draw[very thick] (0,-1) -- (0,1);
	\draw (-.2,-1) -- (-.2,1);
	\draw (.2,-1) -- (.2,1);
	\draw (.6,-1) -- (.6,1);
	\node at (0,-1.2) {\scriptsize{$\Lambda$}};
	\node at (-.2,-1.24) {\scriptsize{$x$}};
	\node at (.23,-1.2) {\scriptsize{$\overline x$}};
	\node at (.6,-1.24) {\scriptsize{$a$}};
	\node at (0,1.2) {\scriptsize{$\Lambda$}};
	\node at (-.2,1.17) {\scriptsize{$x$}};
	\node at (.23,1.2) {\scriptsize{$\overline x$}};
	\node at (.6,1.17) {\scriptsize{$a$}};
\end{tikzpicture}
\,.
$$
The verification that $e_{\Delta,a}\circ e_{\Delta,a}^*=\id_{a\boxtimes \Delta}$ is similar.

It remains to verify the `hexagon' axiom 
$e_{\Delta,a\boxtimes b} =
(\id_{a}\boxtimes e_{\Delta,b}) \circ (e_{\Delta,a}\boxtimes \id_{b}$).
We do this with the help of the Fusion and I=H relations: 
\begin{align*}
\begin{tikzpicture}[baseline=-.1cm, xscale=1.2]
	\draw[very thick] (0,-.8) -- (0,.8);
	\draw (-.6,.8) -- (.8,-.8);
	\draw (-.8,.8) -- (.6,-.8);
\roundNbox{unshaded}{(0,0)}{.3}{.4}{.4}{$e_{\Delta,a\boxtimes b}$};
	\node at (0,1) {\scriptsize{$\Delta$}};
	\node at (0,-1) {\scriptsize{$\Delta$}};
	\node at (-.85,.97) {\scriptsize{$a$}};
	\node at (-.6,1) {\scriptsize{$b$}};
	\node at (.6,-1.02) {\scriptsize{$a$}};
	\node at (.85,-.99) {\scriptsize{$b$}};
\end{tikzpicture}
&=
\,\,\sum_{c\in \Irr(\cC)}
\sqrt{d_cd_a^{-1}d^{-1}}\hspace{-.6cm}{\phantom{d}}_b^{\phantom{-1}}
\begin{tikzpicture}[baseline=-.1cm, xscale=1.2]
	\draw[very thick] (0,-.8) -- (0,.8);
	\draw (.8,-.8) -- (.6,-.6) -- (.6,-.8)
    (.6,-.6) -- (.4,-.4)
    (.2,-.2) -- (.4,-.4) -- (.4,-.2);
	\draw (-.6,.8) -- (.1,0);
	\draw (-.8,.8) -- (-.1,0);
	\draw[fill=\betacolor] (.6,-.6) circle (.05cm);
	\draw[fill=\betacolor] (.4,-.4) circle (.05cm);
\roundNbox{unshaded}{(0,.1)}{.3}{.4}{.4}{$e_{\Delta,a\boxtimes b}$};
	\node at (0,1) {\scriptsize{$\Delta$}};
	\node at (0,-1) {\scriptsize{$\Delta$}};
	\node at (-.85,.97) {\scriptsize{$a$}};
	\node at (-.6,1) {\scriptsize{$b$}};
	\node at (.6,-1.02) {\scriptsize{$a$}};
	\node at (.85,-.99) {\scriptsize{$b$}};
	\node at (.37,-.67) {\scriptsize{$c$}};
\end{tikzpicture}
=
\sum_{c\in\Irr(\cC)}
\sqrt{d_cd_a^{-1}d^{-1}}\hspace{-.6cm}{\phantom{d}}_b^{\phantom{-1}}
\begin{tikzpicture}[baseline=-.1cm, xscale=.9]
	\draw[very thick] (0,-.8) -- (0,.8);
	\draw (-1,.8) -- (-.6,.5) -- (-.6,.8);
	\draw (-.6,.5) -- (-.2,.2);
	\draw (1,-.8) -- (.6,-.5) -- (.6,-.8);
	\draw (.6,-.5) -- (.2,-.2);
	\draw[fill=\betacolor] (-.6,.5) circle (.05cm);
	\draw[fill=\betacolor] (.6,-.5) circle (.05cm);
\roundNbox{unshaded}{(0,0)}{.28}{.23}{.23}{$e_{\Delta,c}$};
	\node at (0,-1) {\scriptsize{$\Delta$}};
	\node at (.6,-.97) {\scriptsize{$a$}};
	\node at (1,-.97) {\scriptsize{$b$}};
	\node at (0,1) {\scriptsize{$\Delta$}};
	\node at (-.6,.97) {\scriptsize{$b$}};
	\node at (-1,.97) {\scriptsize{$a$}};
    \node at (.31,-.49) {\scriptsize{$c$}};
	\node at (-.31,.49) {\scriptsize{$c$}};
\end{tikzpicture}=
\\
&=
\sum_{x,z,c\in\Irr(\cC)}
\sqrt{d_a^{-1}d^{-1}}\hspace{-.6cm}{\phantom{d}}_b^{\phantom{-1}}
\begin{tikzpicture}[baseline=-.1cm, xscale=.95]
	\draw[very thick] (0,-.8) -- (0,.8);
	\draw (-.2,-.8) -- (-.2,.8);
	\draw (.2,-.8) -- (.2,.8);
	\draw (-1,.8) -- (-.6,.5) -- (-.6,.8);
	\draw (-.6,.5) -- (-.2,.2);
	\draw (1,-.8) -- (.6,-.5) -- (.6,-.8);
	\draw (.6,-.5) -- (.2,-.2);
	\draw[fill=\betacolor] (-.6,.5) circle (.05cm);
	\draw[fill=\alphacolor] (-.2,.2) circle (.05cm);
	\draw[fill=\alphacolor] (.2,-.2) circle (.05cm);
	\draw[fill=\betacolor] (.6,-.5) circle (.05cm);
	\node at (0,-1) {\scriptsize{$\Lambda$}};
	\node at (-.2,-1.04) {\scriptsize{$x$}};
	\node at (.23,-1) {\scriptsize{$\overline x$}};
	\node at (.6,-.97) {\scriptsize{$a$}};
	\node at (1,-.97) {\scriptsize{$b$}};
	\node at (0,1) {\scriptsize{$\Lambda$}};
	\node at (-.2,.97) {\scriptsize{$z$}};
	\node at (.23,1) {\scriptsize{$\overline z$}};
	\node at (-.6,.97) {\scriptsize{$b$}};
	\node at (-1,.97) {\scriptsize{$a$}};
	\node at (.5,-.2) {\scriptsize{$c$}};
	\node at (-.5,.2) {\scriptsize{$c$}};
\end{tikzpicture}
=
\sum_{x,y,z\in \Irr(\cC)}
\sqrt{d_a^{-1}d^{-1}}\hspace{-.6cm}{\phantom{d}}_b^{\phantom{-1}}
\begin{tikzpicture}[baseline=-.1cm, xscale=1]
	\coordinate (b) at (-.2,.4);
	\coordinate (c) at (-.2,-.2);
	\coordinate (d) at (.2,-.4);
	\coordinate (e) at (.2,.2);
	\draw[very thick] (0,-.8) -- (0,.8);
	\draw (-.2,-.8) -- (-.2,.8);
	\draw (.2,-.8) -- (.2,.8);
	\draw (-.6,.8) -- (b);
	\draw (-1.2,.8) -- (c);
	\draw (d) -- (.6,-.8);
	\draw (e) -- (1.2,-.8);
	\draw[fill=\alphacolor] (b) circle (.05cm);
	\draw[fill=\betacolor] (c) circle (.05cm);
	\draw[fill=\betacolor] (d) circle (.05cm);
	\draw[fill=\alphacolor] (e) circle (.05cm);
	\node at (0,-1) {\scriptsize{$\Lambda$}};
	\node at (-.2,-1.04) {\scriptsize{$x$}};
	\node at (.23,-1) {\scriptsize{$\overline x$}};
	\node at (.6,-.97) {\scriptsize{$a$}};
	\node at (1.2,-.97) {\scriptsize{$b$}};
	\node at (0,1) {\scriptsize{$\Lambda$}};
	\node at (-.2,.97) {\scriptsize{$z$}};
	\node at (.23,1) {\scriptsize{$\overline z$}};
	\node at (-.6,.97) {\scriptsize{$b$}};
	\node at (-1.2,.97) {\scriptsize{$a$}};
	\node at (-.35,.2) {\scriptsize{$y$}};
	\node at (.35,-.2) {\scriptsize{$\overline y$}};
\end{tikzpicture}
\qedhere
\end{align*}
\end{proof}

\begin{prop}
The assignment $\Lambda \mapsto (\Delta, e_\Delta)$ defines a functor $\Bim(R)\to \cC'$.
\end{prop}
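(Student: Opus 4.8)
The plan is to define $\underline\Delta$ on morphisms in the only sensible way and to check that the resulting maps intertwine the half-braidings, so that they land in $\cC'$ rather than merely in $\Bim(R)$. Given a morphism $g\colon \Lambda\to\Lambda'$ in $\Bim(R)$, I set
\[
\Delta(g):=\bigoplus_{x\in\Irr(\cC)}\id_x\boxtimes g\boxtimes\id_{\overline x}\colon \Delta(\Lambda)\to\Delta(\Lambda').
\]
Compatibility with identities and composition, $\Delta(\id_\Lambda)=\id_{\Delta(\Lambda)}$ and $\Delta(g'\circ g)=\Delta(g')\circ\Delta(g)$, is immediate from the bifunctoriality of Connes fusion, checked summand by summand.

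The only substantive point is that $\Delta(g)$ is a morphism in the commutant, i.e.\ that it intertwines the half-braidings: for every $a\in\cC$,
\[
(\id_a\boxtimes\Delta(g))\circ e_{\Delta(\Lambda),a}=e_{\Delta(\Lambda'),a}\circ(\Delta(g)\boxtimes\id_a).
\]
Both sides are additive in $a$, so it suffices to treat $a$ simple, where $e_{\Delta,a}$ is given by the explicit formula (\ref{eq:HalfBraiding}). First I would restrict attention to a single summand indexed by a pair $(x,y)\in\Irr(\cC)^2$, on which $e_{\Delta,a}$ sends $x\boxtimes\Lambda\boxtimes\overline x\boxtimes a$ to $a\boxtimes y\boxtimes\Lambda\boxtimes\overline y$.

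The key observation, which makes the verification essentially free, is that in the string diagram for $e_{\Delta,a}$ the thick $\Lambda$-strand runs straight from bottom to top and is attached to none of the trivalent coupling vertices; those vertices involve only the $\cC$-valued strands $x,\overline x,y,\overline y,a$. Thus $e_{\Delta,a}$ acts as the identity on the $\Lambda$-strand, while $\Delta(g)$ acts solely on it. Since the $\Lambda$-strand is disjoint from the part of the diagram carrying the half-braiding, sliding $g$ along it past $e_{\Delta,a}$ is an instance of the interchange law for $\boxtimes$, and the intertwining identity follows summand by summand. Hence $\Delta(g)$ lies in $\cC'$ and $\underline\Delta$ is a functor.

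I expect no genuine obstacle: all the real content sits in Proposition \ref{Prop:eDeltaHalfBraiding}, which already shows $(\Delta,e_\Delta)$ is an object of $\cC'$. The only bookkeeping to be careful with is that applying $g$ neither disturbs the indexing of the direct sum nor interacts with the trivalent couplings, which is clear because $g$ is supported entirely on the middle $\Lambda$-factor of each summand $x\boxtimes\Lambda\boxtimes\overline x$.
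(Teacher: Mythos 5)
Your proposal is correct and follows essentially the same route as the paper: define $\Delta(g)=\sum\id_x\boxtimes g\boxtimes\id_{\overline x}$ and verify the intertwining identity by sliding $g$ along the $\Lambda$-strand past the half-braiding, which is exactly the paper's one-line string-diagram computation (the $\Lambda$-strand passes straight through (\ref{eq:HalfBraiding}) untouched by the trivalent vertices). Your additional remarks about additivity in $a$ and functoriality are the same routine observations the paper leaves implicit.
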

\begin{proof}
Given a morphism $f: \Lambda_1 \to \Lambda_2$ in $\Bim(R)$, we let
\[
\Delta(f) :=\, \sum \id_x\boxtimes f \boxtimes \id_{\overline x}\,:\,\Delta(\Lambda_1)\to\Delta(\Lambda_2).
\]
In order to check that this is a morphism in $\cC'$, we need to verify that
$e_{\Delta(\Lambda_2),a}\circ(\Delta(f)\boxtimes \id_a)=(\id_a\boxtimes \Delta(f))\circ e_{\Delta(\Lambda_1),a}$.
This is straightforward using the definition (\ref{eq:HalfBraiding}) of the half-braiding:
\[
\sum_{x,y\in \Irr(\cC)} \!\sqrt{d_a^{-1}}\,
\begin{tikzpicture}[baseline=-.1cm, xscale=1.15]
	\draw[very thick] (0,-.6) -- (0,.6);
	\draw (-.3,-.6) -- (-.3,.6);
	\draw (.3,-.6) -- (.3,.6);
	\draw (-.6,.6) -- (-.3,.2);
	\draw (.6,-.6) -- (.3,-.2);
	\draw[fill=\betacolor] (-.3,.2) circle (.05cm);
	\draw[fill=\betacolor] (.3,-.2) circle (.05cm);
	\draw[fill=black] (0,-.4) node[left, xshift=1]{$\scriptstyle f$} +(.05,.05) rectangle +(-.05,-.05);
	\node at (.05,-.8) {\scriptsize{$\Lambda_1$}};
	\node at (-.3,-.84) {\scriptsize{$x$}};
	\node at (.33,-.8) {\scriptsize{$\overline x$}};
	\node at (.6,-.84) {\scriptsize{$a$}};
	\node at (.05,.8) {\scriptsize{$\Lambda_2$}};
	\node at (-.3,.77) {\scriptsize{$y$}};
	\node at (.33,.8) {\scriptsize{$\overline y$}};
	\node at (-.6,.77) {\scriptsize{$a$}};
\end{tikzpicture}
\,=\,
\sum_{x,y\in \Irr(\cC)}  \!\sqrt{d_a^{-1}}\,
\begin{tikzpicture}[baseline=-.1cm, xscale=1.15]
	\draw[very thick] (0,-.6) -- (0,.6);
	\draw (-.3,-.6) -- (-.3,.6);
	\draw (.3,-.6) -- (.3,.6);
	\draw (-.6,.6) -- (-.3,.2);
	\draw (.6,-.6) -- (.3,-.2);
	\draw[fill=\betacolor] (-.3,.2) circle (.05cm);
	\draw[fill=\betacolor] (.3,-.2) circle (.05cm);
	\draw[fill=black] (0,.4) node[left, xshift=1]{$\scriptstyle f$} +(.05,.05) rectangle +(-.05,-.05);
	\node at (.05,-.8) {\scriptsize{$\Lambda_1$}};
	\node at (-.3,-.84) {\scriptsize{$x$}};
	\node at (.33,-.8) {\scriptsize{$\overline x$}};
	\node at (.6,-.84) {\scriptsize{$a$}};
	\node at (.05,.8) {\scriptsize{$\Lambda_2$}};
	\node at (-.3,.77) {\scriptsize{$y$}};
	\node at (.33,.8) {\scriptsize{$\overline y$}};
	\node at (-.6,.77) {\scriptsize{$a$}};
\end{tikzpicture}
.
\qedhere
\]
\end{proof}

\begin{rem}
The construction of $\underline\Delta(\Lambda)=(\Delta(\Lambda),e_{\Delta(\Lambda)})$ works under the greater generality of a rigid C*-tensor category (in particular semisimple) represented in $\Bim(R)$, not necessarily fully faithfully.
The half-braiding (\ref{eq:HalfBraiding}) is unitary by Proposition~\ref{Prop:eDeltaHalfBraiding}, and thus bounded.
\end{rem}

\subsection{The endomorphism algebra}\label{sec:The endomorphism algebra}

In this section, we fix a bimodule $\Lambda\in\Bim(R)$.
Our goal is to compute the endomorphism algebra of $\underline\Delta(\Lambda)$.
As in the previous section, we will write $\Delta$ for the underlying object of $\underline\Delta(\Lambda)$.

\begin{thm}\label{thm: endo}
The map that sends
$$
f\,=\,\,\big(f_a: \Lambda \boxtimes a \to a\boxtimes \Lambda\big)_{a\in\Irr(\cC)}
$$
to
$$
T_f :=
\sum_{a,x,y\in\Irr(\cC)}
\begin{tikzpicture}[baseline=-.1cm]
	\draw[very thick] (0,-.8) -- (0,.8);
	\draw (.6,-.4) -- (-.6,.4);
	\draw (.6,-.8) -- (.6,.8);
	\draw (-.6,-.8) -- (-.6,.8);
	\draw[fill=\betacolor] (-.6,.4) circle (.05cm);
	\draw[fill=\betacolor] (.6,-.4) circle (.05cm);
	\roundNbox{unshaded}{(0,0)}{.3}{0}{0}{$f_a$};
	\node at (-.6,1) {\scriptsize{$y$}};
	\node at (.6,1) {\scriptsize{$\overline{y}$}};
	\node at (-.6,-1.02) {\scriptsize{$x$}};
	\node at (.6,-1) {\scriptsize{$\overline{x}$}};
	\node at (-.4,.45) {\scriptsize{$a$}};
	\node at (.4,-.45) {\scriptsize{$a$}};
	\node at (0,1) {\scriptsize{$\Lambda$}};
	\node at (0,-1) {\scriptsize{$\Lambda$}};
\end{tikzpicture}\,:\,\Delta\to \Delta
$$
induces a vector space isomorphism
\begin{equation*} 
\bigoplus_{a\in \Irr(\cC)} \Hom_{\Bim(R)}(\Lambda\boxtimes a, a\boxtimes\Lambda) \,\,\cong\,\, \End_{\cC'}(\underline\Delta(\Lambda)).
\end{equation*}

Under the above isomorphism, the left hand side 
acquires the following $*$-algebra structure: The $*$-operation is given by
\begin{equation}\label{eq: f*}
(f^*)_a\,:=\,
\begin{tikzpicture}[baseline=-.1cm]
	\draw[very thick] (0,-.7) -- (0,.7);
	\draw (-.7,.7) .. controls ++(-90:1.3cm) and ++(240:.6cm) .. (-.15,-.3);
	\draw (.7,-.7) .. controls ++(90:1.3cm) and ++(60:.6cm) .. (.15,.3);
	\roundNbox{unshaded}{(0,0)}{.3}{.2}{.2 }{$(f_{\overline a})^*$};
	\node at (-.7,.9) {\scriptsize{$a$}};
	\node at (.7,-.9) {\scriptsize{$a$}};
	\node at (0,.9) {\scriptsize{$\Lambda$}};
	\node at (0,-.9) {\scriptsize{$\Lambda$}};
\end{tikzpicture}
\end{equation}
and the product is given by
\begin{equation}\label{eq: (fg)_a}
(f{\cdot}g)_a\,:=
\sum_{b,c\in\Irr(\cC)}
\begin{tikzpicture}[baseline=-.1cm, xscale=1.2]
\draw (-.17,-.2) to[rounded corners=5] (-.45,.1) to[sharp corners] (-.45,1.1) -- (-.55,1.4);
\draw (-.15,.8) to[bend right=25] (-.45,1.1);
\draw (.17,.2) to[rounded corners=5] (.45,-.1) to[sharp corners] (.45,-1.1) -- (.55,-1.4);
\draw (.15,-.8) to[bend right=25] (.45,-1.1);
\draw[fill=\betacolor] (-.45,1.1) circle (.05cm);
\draw[fill=\betacolor] (.45,-1.1) circle (.05cm);
	\draw[very thick] (0,-1.4) -- (0,1.4);
\roundNbox{unshaded}{(0,.5)}{.3}{0}{0}{$f_b$};
\roundNbox{unshaded}{(0,-.5)}{.3}{0}{0}{$g_c$};
	\node at (-.58,1.6) {\scriptsize{$a$}};
	\node at (.58,-1.6) {\scriptsize{$a$}};
	\node at (-.25,1.25) {\scriptsize{$b$}};
	\node at (.25,-1.25) {\scriptsize{$c$}};
	\node at (-.58,.5) {\scriptsize{$c$}};
	\node at (.58,-.5) {\scriptsize{$b$}};
	\node at (0,1.6) {\scriptsize{$\Lambda$}};
	\node at (0,-1.6) {\scriptsize{$\Lambda$}};
\end{tikzpicture}\,.
\end{equation}
\end{thm}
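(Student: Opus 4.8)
The map $f\mapsto T_f$ is manifestly linear, and since $\Irr(\cC)$ is finite all the sums involved are finite, so $T_f$ is a bounded morphism in $\Bim(R)$ with no convergence issues to worry about. My plan has three parts: first to check that $T_f$ genuinely lies in $\End_{\cC'}(\underline\Delta(\Lambda))$, i.e. commutes with the half-braiding $e_\Delta$; second to show $f\mapsto T_f$ is a bijection; and third to compute the transported $*$-algebra structure. Conceptually the isomorphism is an instance of the adjunction between the forgetful functor $\cC'\to\Bim(R)$ and the induction functor $\underline\Delta$, under which $\End_{\cC'}(\underline\Delta(\Lambda))\cong\Hom_{\Bim(R)}(\Lambda,\bigoplus_x x\boxtimes\Lambda\boxtimes\overline x)\cong\bigoplus_a\Hom(\Lambda\boxtimes a,a\boxtimes\Lambda)$, but I would argue directly in the graphical calculus, using only the relations established above.

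To show $T_f$ commutes with $e_{\Delta,b}$ for each irreducible $b$, I would stack the diagram for $T_f$ against that for $e_{\Delta,b}$ on each side and use the $\mathrm{I=H}$ and Fusion relations to slide the trivalent vertices of $e_{\Delta,b}$ past the $a$-strand of $T_f$; the half-braiding property of $e_\Delta$ proved in Proposition~\ref{Prop:eDeltaHalfBraiding} is the geometric input that makes the two sides agree. For the bijection I would exhibit an explicit inverse $\Phi$: given $T\in\End_{\cC'}(\underline\Delta(\Lambda))$, I set $\Phi(T)_a$ to be the composite obtained by including $\Lambda\cong 1\boxtimes\Lambda\boxtimes\overline 1$ as the $x=1$ summand of $\Delta$, applying $T$, projecting onto the $y=a$ summand $a\boxtimes\Lambda\boxtimes\overline a$, and bending the trailing $\overline a$-leg up by $\ev_a$ (up to a normalising scalar) to land in $\Hom(\Lambda\boxtimes a,a\boxtimes\Lambda)$.

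The identity $\Phi(T_f)=f$ is then a direct computation which does not use the half-braiding: substituting the definition of $T_f$, the inclusion at $x=1$ and the projection at $y=a$ kill all but the $x=1$, $y=a$ terms of the double sum, and the two bigons that appear (one leg of each being the unit object) collapse to scalars by the Bigon~1 relation, leaving exactly $f_a$ once the normalisations cancel. This already shows $f\mapsto T_f$ is injective. The reverse identity $T_{\Phi(T)}=T$ is the heart of the matter and the main obstacle, since here one must genuinely use that $T$ commutes with the half-braiding. The point is that the half-braiding relation for $T$ expresses an arbitrary matrix component $T_{y,x}\colon x\boxtimes\Lambda\boxtimes\overline x\to y\boxtimes\Lambda\boxtimes\overline y$ in terms of the single ``row'' $x=1$ that $\Phi$ records, so that $T$ is reconstructed from $\Phi(T)$; carrying this out amounts to resolving the identity on the $x$- and $\overline x$-strands by the Fusion relation and recognising the result, via $\mathrm{I=H}$, as $T_{\Phi(T)}$. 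I expect this reconstruction to require the most careful bookkeeping of the colored-dot normalisations.

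Finally, the $*$-algebra structure on $\bigoplus_a\Hom(\Lambda\boxtimes a,a\boxtimes\Lambda)$ is by definition the one transported from $\End_{\cC'}(\underline\Delta(\Lambda))$ along the isomorphism, where composition is composition of endomorphisms and $*$ is the adjoint inherited from $\Bim(R)$. For the product I would stack $T_f$ above $T_g$, merge the two triangular $a$-legs with the Fusion relation, rearrange the resulting configuration by $\mathrm{I=H}$, and then apply $\Phi$ to read off formula~(\ref{eq: (fg)_a}), the sum over $b,c\in\Irr(\cC)$ arising from the internal edges. For the involution I would take the adjoint of the diagram defining $T_f$ (reflecting it and replacing each $f_a$ by $f_a^*$), use the identities $\overline{\mathrm{ev}_x}=\mathrm{ev}_x$ and $\overline{\mathrm{coev}_x}=\mathrm{coev}_x$ together with the zig-zag axioms to straighten the bent strands, and observe that the reflection interchanges the label $a$ with $\overline a$; applying $\Phi$ then yields formula~(\ref{eq: f*}). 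Both computations are routine once the bijection and its inverse $\Phi$ are in hand.
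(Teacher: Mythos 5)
Your proposal is correct, and the overall skeleton (check $T_f\in\End_{\cC'}(\underline\Delta(\Lambda))$ via I=H, exhibit an explicit inverse, verify the two composites, then transport the $*$-algebra structure) matches the paper's. The genuine difference is your choice of inverse. The paper's inverse $T\mapsto f_T$ averages over \emph{all} matrix components of $T$, closing off the $x$- and $y$-strands into loops and normalizing by $\dim(\cC)^{-1}$; this makes both verifications into symmetric closed-form diagram computations, at the price of invoking Lemma~\ref{lem:Sumdxdy} in each direction. Your $\Phi$ instead reads off the single component $(x,y)=(1,a)$, which makes $\Phi(T_f)=f$ (hence injectivity) immediate and isolates exactly where the hypothesis $T\in\End_{\cC'}$ enters. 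For the step you flag as the heart of the matter, note that you can avoid the heavy diagrammatic bookkeeping entirely: the naturality of $T$ with respect to $e_{\Delta,b}$, restricted to the $x=1$ summand of the source, reads $(\id_b\boxtimes T)\circ e_{\Delta,b}|_{x=1}=e_{\Delta,b}\circ(T\boxtimes\id_b)|_{x=1}$, and since $e_{\Delta,b}|_{x=1}$ is a nonzero multiple of the split injection $\coev_b\boxtimes\id_{\Lambda\boxtimes b}$ onto the $y=\overline b$ summand, this determines the row $x=\overline b$ of $T$ from the row $x=1$; as $b$ ranges over $\Irr(\cC)$ every row is so determined, so any element of $\End_{\cC'}(\underline\Delta(\Lambda))$ is fixed by its first row. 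Then $T=T_{\Phi(T)}$ follows because both sides lie in $\End_{\cC'}(\underline\Delta(\Lambda))$ and $\Phi(T_{\Phi(T)})=\Phi(T)$ forces them to have the same first row. With that soft argument in place, your computations of the product and the involution go through exactly as in the paper (the product is one application of I=H after stacking; the involution uses that the canonical paired dual bases are preserved under taking adjoints and rotating, which is the same level of rigor the paper works at), so the proposal is a complete and valid alternative proof.
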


\begin{rem}
The map $f_{\overline a}:\Lambda \boxtimes \overline a \to \overline a\boxtimes \Lambda$, which appears in the right hand side of (\ref{eq: f*}) requires the choice of an isomorphism between $\overline a$ and the unique element of $\Irr(\cC)$ to which it is isomorphic. It is important to note that, because $\overline a$ appears in both the domain and the codomain, the map $f_{\overline a}$ does not depend on that choice.
\end{rem}
\begin{rem}
If we take $\Lambda=\bigoplus_{x\in\Irr(C)} x$, then the two equations (\ref{eq: f*}) and (\ref{eq: (fg)_a}) are exactly the ones describing Ocneanu's tube algebra \cite{MR1642584, MR1832764}.
\end{rem}

\begin{proof}[Proof of Theorem \ref{thm: endo}]
We begin by checking, using the I=H relation, that the formula $(\id_b\boxtimes T_f)\circ e_{\Delta,b}=e_{\Delta,b}\circ(T_f\boxtimes \id_b)$ holds:
$$
\sum_{a,x,y\in\Irr(\cC)}
\sqrt{d^{-1}}\hspace{-.6cm}{\phantom{d}}_b^{\phantom{-1}}
\begin{tikzpicture}[baseline=-.4cm, xscale=1.1]
	\draw[very thick] (0,-1.4) -- (0,.8);
	\draw (.4,-.4) -- (-.4,.4);
	\draw (.4,-1.4) -- (.4,.8);
	\draw (-.4,-1.4) -- (-.4,.8);
	\draw (.4,-1) -- (.8,-1.4);
	\draw (-.4,-.2) -- (-1.4,.8);
	\draw[fill=\alphacolor] (-.4,.4) circle (.05cm);
	\draw[fill=\alphacolor] (.4,-.4) circle (.05cm);
	\draw[fill=\betacolor] (-.4,-.2) circle (.05cm);
	\draw[fill=\betacolor] (.4,-1) circle (.05cm);
	\draw[fill=black] (0,0)node[left, xshift=2, yshift=-1.5]{$\scriptstyle f_{\!\!\;a}$} +(.05,.05) rectangle +(-.05,-.05);
	\node at (-.4,1) {\scriptsize{$z$}};
	\node at (.4,1) {\scriptsize{$\overline{z}$}};
	\node at (-.5,.1) {\scriptsize{$y$}};
	\node at (.55,-.7) {\scriptsize{$\overline{y}$}};
	\node at (-.4,-1.6) {\scriptsize{$x$}};
	\node at (.4,-1.6) {\scriptsize{$\overline{x}$}};
	\node at (-.2,.45) {\scriptsize{$a$}};
	\node at (.2,-.45) {\scriptsize{$a$}};
	\node at (0,1) {\scriptsize{$\Lambda$}};
	\node at (0,-1.6) {\scriptsize{$\Lambda$}};
	\node at (-1.4,1) {\scriptsize{$b$}};
	\node at (.8,-1.6) {\scriptsize{$b$}};
\end{tikzpicture}
=
\sum_{a,x,y\in\Irr(\cC)}
\sqrt{d^{-1}}\hspace{-.6cm}{\phantom{d}}_b^{\phantom{-1}}
\begin{tikzpicture}[baseline=.2cm, xscale=1.1]
	\draw[very thick] (0,1.4) -- (0,-.8);
	\draw (.4,-.4) -- (-.4,.4);
	\draw (.4,1.4) -- (.4,-.8);
	\draw (-.4,1.4) -- (-.4,-.8);
	\draw (-.4,1) -- (-.8,1.4);
	\draw (.4,.2) -- (1.4,-.8);
	\draw[fill=\alphacolor] (-.4,.4) circle (.05cm);
	\draw[fill=\alphacolor] (.4,-.4) circle (.05cm);
	\draw[fill=\betacolor] (.4,.2) circle (.05cm);
	\draw[fill=\betacolor] (-.4,1) circle (.05cm);
	\draw[fill=black] (0,0)node[left, xshift=2, yshift=-1.5]{$\scriptstyle f_{\!\!\;a}$} +(.05,.05) rectangle +(-.05,-.05);
	\node at (-.4,1.6) {\scriptsize{$z$}};
	\node at (.4,1.6) {\scriptsize{$\overline{z}$}};
	\node at (-.5,.7) {\scriptsize{$y$}};
	\node at (.55,-.2) {\scriptsize{$\overline{y}$}};
	\node at (-.4,-1) {\scriptsize{$x$}};
	\node at (.4,-1) {\scriptsize{$\overline{x}$}};
	\node at (-.2,.45) {\scriptsize{$a$}};
	\node at (.2,-.45) {\scriptsize{$a$}};
	\node at (0,-1) {\scriptsize{$\Lambda$}};
	\node at (0,1.6) {\scriptsize{$\Lambda$}};
	\node at (-.8,1.6) {\scriptsize{$b$}};
	\node at (1.4,-1) {\scriptsize{$b$}};
\end{tikzpicture}.
$$
This ensures that $T_f\in\End_{\cC'}(\underline\Delta(\Lambda))$.

We now show that the map 
$\bigoplus_{a\in Irr(\cC)} \Hom(\Lambda\boxtimes a, a\boxtimes\Lambda) \to \End_{\cC'}(\Delta)$ 
given by $f\mapsto T_f$ is an isomorphism.
For that, we define a map the other way as follows.
It sends $T\in \End_{\cC'}(\underline\Delta(\Lambda))$ to the element $f_T =(f_{T,a}: \Lambda \boxtimes a \to a \boxtimes \Lambda)$ given by
$$
f_{T,a} \,:= \,
\dim(\cC)^{-1}\!
\sum_{x,y\in\Irr(\cC)}
\begin{tikzpicture}[baseline=-.1cm, xscale=1.1]
	\draw[very thick] (0,-1) -- (0,1);
	\draw (-.15,-.3) arc (0:-180:.15cm) -- (-.45,.6) -- (-.6,1);
	\draw (-.15,.3) arc (0:90:.3cm);
	\draw (.15,.3) arc (180:0:.15cm) -- (.45,-.6) -- (.6,-1);
	\draw (.15,-.3) arc (180:270:.3cm);
	\draw[fill=\betacolor] (-.45,.6) circle (.05cm);
	\draw[fill=\betacolor] (.45,-.6) circle (.05cm);
	\roundNbox{unshaded}{(0,0)}{.3}{0}{0}{$T$}
	\node at (0,1.2) {\scriptsize{$\Lambda$}};
	\node at (0,-1.2) {\scriptsize{$\Lambda$}};
	\node at (-.6,1.2) {\scriptsize{$a$}};
	\node at (.6,-1.2) {\scriptsize{$a$}};
	\node at (-.2,.7) {\scriptsize{$y$}};
	\node at (.63,-.05) {\scriptsize{$y$}};
	\node at (.22,.7) {\scriptsize{$\overline y$}};
	\node at (-.65,0) {\scriptsize{$\overline x$}};
	\node at (.22,-.7) {\scriptsize{$\overline x$}};
	\node at (-.22,-.63) {\scriptsize{$x$}};
\end{tikzpicture}
$$
We now check that these two maps are each other's inverses.
The equation $f_{T_f}=f$ is an easy consequence of Lemma \ref{lem:Sumdxdy}:
$$
f_{T_{f},a}
\,=\,\,
\dim(\cC)^{-1}
\sum_{x,y,b}\,
\begin{tikzpicture}[baseline=-.1cm]
	\draw[very thick] (0,-.9) -- (0,.9);
	\draw (.6,-.4) -- (-.6,.4);
	\draw (.9,-.7) -- (1.2,-.9);
	\draw (-.9,.7) -- (-1.2,.9);
	\draw (-.75,.55) circle (.21cm);
	\draw (.75,-.55) circle (.21cm);
	\draw[fill=\betacolor] (-.6,.4) circle (.05cm);
	\draw[fill=\betacolor] (.6,-.4) circle (.05cm);
	\draw[fill=\alphacolor] (-.9,.7) circle (.05cm);
	\draw[fill=\alphacolor] (.9,-.7) circle (.05cm);
	\roundNbox{unshaded}{(0,0)}{.3}{0}{0}{$f_b$};
	\node at (-1.2,1.1) {\scriptsize{$a$}};
	\node at (1.2,-1.1) {\scriptsize{$a$}};
	\node at (1.1,-.4) {\scriptsize{$y$}};
	\node at (-.4,.7) {\scriptsize{$y$}};
\node at (-1.15,.4) {\scriptsize{$\overline x$}};
\node at (.35,-.7) {\scriptsize{$\overline x$}};
	\node at (-.5,.1) {\scriptsize{$b$}};
	\node at (.5,-.1) {\scriptsize{$b$}};
	\node at (0,1.1) {\scriptsize{$\Lambda$}};
	\node at (0,-1.1) {\scriptsize{$\Lambda$}};
\end{tikzpicture}
%
=\,\,
f_a\,.
$$
For the other direction, we need to check that $T_{f_T}=T$ holds for every $T\in \End_{\cC'}(\underline\Delta(\Lambda))$:
\begin{align*}
T_{f_T}\,&=\,\,
\dim(\cC)^{-1}
\sum_{a,b,c,x,y}
\begin{tikzpicture}[baseline=-.1cm]
	\draw[very thick] (0,-1.2) -- (0,1.2);
	\draw (-.8,-1.2) -- (-.8,1.2);
	\draw (.8,-1.2) -- (.8,1.2);
	\draw (-.15,-.3) arc (0:-180:.15cm) -- (-.45,.6) -- (-.8,.8);
	\draw (-.15,.3) arc (0:90:.3cm);
	\draw (.15,.3) arc (180:0:.15cm) -- (.45,-.6) -- (.8,-.8);
	\draw (.15,-.3) arc (180:270:.3cm);
	\draw[fill=\betacolor] (-.45,.6) circle (.05cm);
	\draw[fill=\betacolor] (.45,-.6) circle (.05cm);
	\draw[fill=\alphacolor] (-.8,.8) circle (.05cm);
	\draw[fill=\alphacolor] (.8,-.8) circle (.05cm);
	\roundNbox{unshaded}{(0,0)}{.3}{0}{0}{$T$}
	\node at (0,1.4) {\scriptsize{$\Lambda$}};
	\node at (0,-1.4) {\scriptsize{$\Lambda$}};
	\node at (-.6,.9) {\scriptsize{$b$}};
	\node at (.6,-.9) {\scriptsize{$b$}};
	\node at (-.15,.7) {\scriptsize{$y$}};
	\node at (.2,.7){\scriptsize{$\overline y$}};
	\node at (-.2,-.7) {\scriptsize{$x$}};
	\node at (.2,-.72){\scriptsize{$\overline x$}};
	\node at (-.8,-1.4) {\scriptsize{$a$}};
	\node at (.8,-1.4){\scriptsize{$\overline a$}};
	\node at (-.8,1.4) {\scriptsize{$c$}};
	\node at (.8,1.4){\scriptsize{$\overline c$}};
\end{tikzpicture}
=\,
\dim(\cC)^{-1}
\sum_{a,b,c,x,y}
\begin{tikzpicture}[baseline=-.1cm]
	\draw[very thick] (0,-1.2) -- (0,1.2);
	\draw (-.15,-1.2) -- (-.15,1.2);
	\draw (.15,-1.2) -- (.15,1.2);
	\draw (-.15,.7) .. controls ++(180:.7cm) and ++(180:.7cm) .. (-.15,-.7);
	\draw (.15,.7) .. controls ++(0:.7cm) and ++(0:.7cm) .. (.15,-.7);
	\draw[fill=\betacolor] (-.15,-.7) circle (.05cm);
	\draw[fill=\alphacolor] (-.15,.7) circle (.05cm);
	\draw[fill=\betacolor] (.15,-.7) circle (.05cm);
	\draw[fill=\alphacolor] (.15,.7) circle (.05cm);
	\roundNbox{unshaded}{(0,0)}{.3}{0}{0}{$T$}
	\node at (0,1.4) {\scriptsize{$\Lambda$}};
	\node at (0,-1.4) {\scriptsize{$\Lambda$}};
	\node at (-.85,0) {\scriptsize{$b$}};
	\node at (.88,0){\scriptsize{$\overline b$}};
	\node at (-.3,.5) {\scriptsize{$y$}};
	\node at (.32,.48){\scriptsize{$\overline y$}};
	\node at (-.3,-.5) {\scriptsize{$x$}};
	\node at (.32,-.5){\scriptsize{$\overline x$}};
	\node at (-.25,1.37) {\scriptsize{$c$}};
	\node at (.22,1.4){\scriptsize{$\overline c$}};
	\node at (-.2,-1.43) {\scriptsize{$a$}};
	\node at (.22,-1.4){\scriptsize{$\overline a$}};
\end{tikzpicture}
\\
&=\,
\dim(\cC)^{-1}
\sum_{a,b,c,x,y}
\begin{tikzpicture}[baseline=-.1cm]
	\draw[very thick] (0,-1.2) -- (0,1.2);
	\draw (-.15,-1.2) -- (-.15,1.2);
	\draw (.15,-1.2) -- (.15,1.2);
	\draw (-.15,.8) .. controls ++(180:.5cm) and ++(180:.5cm) .. (-.15,0);
	\draw (.15,.8) .. controls ++(0:.5cm) and ++(0:.5cm) .. (.15,0);
	\draw[fill=\betacolor] (-.15,0) circle (.05cm);
	\draw[fill=\alphacolor] (-.15,.8) circle (.05cm);
	\draw[fill=\betacolor] (.15,0) circle (.05cm);
	\draw[fill=\alphacolor] (.15,.8) circle (.05cm);
	\roundNbox{unshaded}{(0,-.7)}{.3}{0}{0}{$T$}
	\node at (0,1.4) {\scriptsize{$\Lambda$}};
	\node at (0,-1.4) {\scriptsize{$\Lambda$}};
	\node at (-.7,.41) {\scriptsize{$b$}};
	\node at (.7,.4){\scriptsize{$\overline b$}};
	\node at (-.3,.38) {\scriptsize{$y$}};
	\node at (.31,.4){\scriptsize{$\overline y$}};
	\node at (-.3,-.2) {\scriptsize{$x$}};
	\node at (.32,-.2){\scriptsize{$\overline x$}};
	\node at (-.25,1.37) {\scriptsize{$c$}};
	\node at (.22,1.4){\scriptsize{$\overline c$}};
	\node at (-.2,-1.43) {\scriptsize{$a$}};
	\node at (.22,-1.4){\scriptsize{$\overline a$}};
\end{tikzpicture}
\,=\,T.
\end{align*}
Here, we have used the I=H relation, followed by the fact that $T$ commutes with (a scalar multiple of) the half-braiding,
and finally Lemma \ref{lem:Sumdxdy}.

At last, we check that the isomorphism 
$\bigoplus_{a\in\Irr(\cC)} \Hom(\Lambda\boxtimes a, a\boxtimes\Lambda) \cong \End_{\cC'}(\underline\Delta(\Lambda))$ 
is compatible with the $*$-operation (\ref{eq: f*}) and the multiplication (\ref{eq: (fg)_a}):
\[
(T_{f})^{*}\,=\,\,\sum_{a,x,y}\,
\begin{tikzpicture}[baseline=-.1cm, xscale=-1]
	\draw[very thick] (0,-.8) -- (0,.8);
	\draw (.6,-.45) -- (-.6,.45);
	\draw (.6,-.8) -- (.6,.8);
	\draw (-.6,-.8) -- (-.6,.8);
	\draw[fill=\betacolor] (-.6,.45) circle (.05cm);
	\draw[fill=\betacolor] (.6,-.45) circle (.05cm);
	\roundNbox{unshaded}{(0,0)}{.3}{.18}{.18}{$(f_a)^*$};
	\node at (-.6,1) {\scriptsize{$\overline{y}$}};
	\node at (.6,1) {\scriptsize{$y$}};
	\node at (-.6,-1) {\scriptsize{$\overline{x}$}};
	\node at (.6,-1) {\scriptsize{$x$}};
	\node at (-.4,.5) {\scriptsize{$a$}};
	\node at (.4,-.5) {\scriptsize{$a$}};
	\node at (0,1) {\scriptsize{$\Lambda$}};
	\node at (0,-1) {\scriptsize{$\Lambda$}};
\end{tikzpicture}
\,=\,\,
\sum_{a,x,y}\,
\begin{tikzpicture}[baseline=-.1cm]
	\draw[very thick] (0,-.8) -- (0,.8);
	\draw (-.8,.4) .. controls ++(-45:.2cm) and ++(240:.7cm) .. (-.18,-.3);
	\draw (.8,-.4) .. controls ++(135:.2cm) and ++(60:.7cm) .. (.18,.3);
	\draw (.8,-.8) -- (.8,.8);
	\draw (-.8,-.8) -- (-.8,.8);
	\draw[fill=\betacolor] (-.8,.4) circle (.05cm);
	\draw[fill=\betacolor] (.8,-.4) circle (.05cm);
	\roundNbox{unshaded}{(0,0)}{.3}{.18}{.18}{$(f_a)^*$};
	\node at (-.8,1) {\scriptsize{$y$}};
	\node at (.8,1) {\scriptsize{$\overline{y}$}};
	\node at (-.8,-1) {\scriptsize{$x$}};
	\node at (.8,-1) {\scriptsize{$\overline{x}$}};
\node at (-.6,.3) {\scriptsize{$\overline{a}$}};
\node at (.6,-.3) {\scriptsize{$\overline{a}$}};
	\node at (0,1) {\scriptsize{$\Lambda$}};
	\node at (0,-1) {\scriptsize{$\Lambda$}};
\end{tikzpicture}
\,=\,\,
\sum_{a,x,y}\,
\begin{tikzpicture}[baseline=-.1cm]
	\draw[very thick] (0,-.8) -- (0,.8);
	\draw (.6,-.45) -- (-.6,.45);
	\draw (.6,-.8) -- (.6,.8);
	\draw (-.6,-.8) -- (-.6,.8);
	\draw[fill=\betacolor] (-.6,.45) circle (.05cm);
	\draw[fill=\betacolor] (.6,-.45) circle (.05cm);
	\roundNbox{unshaded}{(0,0)}{.3}{.19}{.21}{$(f^*\!\!\;)_{\overline a}$};
	\node at (-.6,1) {\scriptsize{$y$}};
	\node at (.6,1) {\scriptsize{$\overline{y}$}};
	\node at (-.6,-1) {\scriptsize{$x$}};
	\node at (.6,-1) {\scriptsize{$\overline{x}$}};
\node at (-.4,.55) {\scriptsize{$\overline a$}};
\node at (.4,-.55) {\scriptsize{$\overline a$}};
	\node at (0,1) {\scriptsize{$\Lambda$}};
	\node at (0,-1) {\scriptsize{$\Lambda$}};
\end{tikzpicture}
\,=\, T_{f^*}
\]
\[
T_f\circ T_g\,=
\sum_{a,b,x,y,z}\,
\begin{tikzpicture}[baseline=-.1cm]
	\draw[very thick] (0,-1) -- (0,1);
	\draw (.4,-.8+.06) -- (-.4,-.2+.06);
	\draw (.4,.2-.06) -- (-.4,.8-.06);
	\draw (.4,-1) -- (.4,1);
	\draw (-.4,-1) -- (-.4,1);
\draw[fill=\betacolor] (-.4,-.2+.06) circle (.05cm);
\draw[fill=\betacolor] (.4,-.8+.06) circle (.05cm);
\draw[fill=\alphacolor] (-.4,.8-.06) circle (.05cm);
\draw[fill=\alphacolor] (.4,.2-.06) circle (.05cm);
	\draw[fill=black] (0,-.5+.06)node[left, xshift=2.3, yshift=-2.5]{$\scriptstyle g_{\!\!\;a}$} +(.05,.05) rectangle +(-.05,-.05);
	\draw[fill=black] (0,.5-.06)node[left, xshift=2.3, yshift=-2.5]{$\scriptstyle f_{\!\!\;b}$} +(.05,.05) rectangle +(-.05,-.05);
	\node at (-.4,1.2) {\scriptsize{$z$}};
	\node at (.4,1.2) {\scriptsize{$\overline{z}$}};
	\node at (-.4,-1.2) {\scriptsize{$x$}};
	\node at (.4,-1.2) {\scriptsize{$\overline{x}$}};
	\node at (-.6,.3) {\scriptsize{$y$}};
	\node at (.6,-.3) {\scriptsize{$\overline{y}$}};
	\node at (-.19,-.18+.06) {\scriptsize{$a$}};
	\node at (.22,-.44+.06) {\scriptsize{$a$}};
	\node at (-.2,.88-.06) {\scriptsize{$b$}};
	\node at (.22,.58-.06) {\scriptsize{$b$}};
	\node at (0,1.2) {\scriptsize{$\Lambda$}};
	\node at (0,-1.2) {\scriptsize{$\Lambda$}};
\end{tikzpicture}
\,=\,
\sum_{a,b,c,x,z}\,
\begin{tikzpicture}[baseline=-.1cm]
	\draw[very thick] (0,-1) -- (0,1);
	\draw (0,-.3) -- (-.4,.3) -- (0,.3);
	\draw (0,-.3) -- (.4,-.3) -- (0,.3);
	\draw (-.4,.3) -- (-.8,.6);
	\draw (.4,-.3) -- (.8,-.6);
	\draw (.8,-1) -- (.8,1);
	\draw (-.8,-1) -- (-.8,1);
	\draw[fill=\alphacolor] (.8,.-.6) circle (.05cm);
	\draw[fill=\betacolor] (.4,-.3) circle (.05cm);
	\draw[fill=\betacolor] (-.4,.3) circle (.05cm);
	\draw[fill=\alphacolor] (-.8,.6) circle (.05cm);
	\draw[fill=black] (0,-.3)node[left, xshift=2.3, yshift=-3.5]{$\scriptstyle g_{\!\!\;a}$} +(.05,.05) rectangle +(-.05,-.05);
	\draw[fill=black] (0,.3)node[right, xshift=-2.3, yshift=5.5]{$\scriptstyle f_{\!\!\;b}$} +(.05,.05) rectangle +(-.05,-.05);
	\node at (-.8,1.2) {\scriptsize{$z$}};
	\node at (.8,1.2) {\scriptsize{$\overline{z}$}};
	\node at (-.8,-1.2) {\scriptsize{$x$}};
	\node at (.8,-1.2) {\scriptsize{$\overline{x}$}};
	\node at (-.65,.25) {\scriptsize{$c$}};
	\node at (.55,-.65) {\scriptsize{$c$}};
	\node at (-.3,-.1) {\scriptsize{$a$}};
	\node at (.2,-.5) {\scriptsize{$a$}};
	\node at (-.25,.5) {\scriptsize{$b$}};
	\node at (.3,.15) {\scriptsize{$b$}};
	\node at (0,1.2) {\scriptsize{$\Lambda$}};
	\node at (0,-1.2) {\scriptsize{$\Lambda$}};
\end{tikzpicture}
=\,
T_{f{\cdot}g}\,.
\]
Here, the last line's middle equality follows from the I=H relation.
\end{proof}

\begin{rem}
The \vspace{.1cm} map $f\mapsto T_f:\bigoplus_{a\in \Irr(\cC)} \Hom_{\Bim(R)}(\Lambda\boxtimes a, a\boxtimes\Lambda)\to \End_{\cC'}(\underline\Delta(\Lambda))$
makes sense in the greater generality of a rigid \vspace{-.13cm} C*-tensor category represented in $\Bim(R)$.
In particular, the operator $T_f$ is always bounded (this follows from 
$
{\scriptstyle\sqrt{d_a^{-1}}}\!
\raisebox{.3ex}{$\underset{\scriptscriptstyle x,y\in \Irr(\cC)}{\sum}$}
\begin{tikzpicture}[baseline=0cm, scale=.6]
	\draw[very thick] (0,-.6) -- (0,.6);
	\draw (-.3,-.6) -- (-.3,.6);
	\draw (.6,0)-- (.3,.6) (-.3-.3,-.6) -- (-.6-.3,0);
	\draw (.6,-.6) -- (.6,.6);
	\draw (-.6-.3,-.6) -- (-.6-.3,.6);
	\draw[fill=\betacolor] (-.6-.3,0) circle (.07cm);
	\draw[fill=\betacolor] (.6,0) circle (.07cm);
\node at (-.6-.3,.8) {{$\scriptscriptstyle y$}};
\node at (.6,.8) {{$\scriptscriptstyle \overline{y}$}};
\node at (-.6-.3,-.83) {{$\scriptscriptstyle x$}};
\node at (.6,-.8) {{$\scriptscriptstyle \overline{x}$}};
\node at (-.3-.3,-.83) {{$\scriptscriptstyle a$}};
\node at (.3,.8) {{$\scriptscriptstyle a$}};
\node at (0,.8) {{$\scriptscriptstyle \Lambda$}};
\node at (0,-.8) {{$\scriptscriptstyle \Lambda$}};
\node at (-.3,.83) {{$\scriptscriptstyle\overline{a}$}};
\node at (-.3,-.8) {{$\scriptscriptstyle\overline{a}$}};
\end{tikzpicture}
$
being\vspace{-.3cm} unitary, and hence bounded).
\end{rem}

\section{Absorbing objects}\label{sec:Absorbing objects}

A tensor category $\cC$ has \emph{no zero-divisors}
if for every non-zero object $X$ and every objects $Y_1, Y_2$,
the maps
\begin{equation*} 
\Hom(Y_1,Y_2)\to\Hom(X\otimes Y_1,X\otimes Y_2)
\quad\text{and}\quad
\Hom(Y_1,Y_2)\to\Hom(Y_1\otimes X,Y_2\otimes X)
\end{equation*}
are injective.
Note that for categories with involutions, it is enough to check that
one of the above maps is injective.

\begin{ex}
The tensor category $\Bim(R)$ has no zero-divisors.
Indeed, since $R$ is a factor, every non-zero module is faithful, and
the claim follows from Lemma~\ref{lem: fusion is faithful}.
\end{ex}

\begin{ex}
Fusion categories have no zero-divisors.
To see that, consider an object $X$ and a morphism $f:Y_1\to Y_2$ such that $\id_X\otimes f=0$.
We need to show that $X\not\cong 0$ implies $f=0$.
Since $X$ is non-zero,
$\ev_X$ is an epimorphism (indeed a projection onto a direct summand).
The morphism $\ev_X\otimes \id_{Y_1}$ is then also an an epimorphism, and we may reason as follows:
\[
f\circ(\underbrace{\ev_X\otimes \id_{Y_1}}_{\text{epi.}})
= \ev_X\otimes f
= (\ev_X\otimes 1_{Y_2})\circ(\id_{X^\vee}\otimes\underbrace{\id_X\otimes f}_{=0})=0
\quad\Rightarrow\quad f=0.
\]
\end{ex}

\begin{defn}
Let $\cC$ be a tensor category with no zero-divisors.
A non-zero object $X$ is called
\begin{itemize}
\item
\emph{right absorbing} if for every non-zero object $Y\in \cC$, we have $X\otimes Y\cong X$,
\item
\emph{left absorbing} if for every non-zero object $Y\in \cC$, we have $Y\otimes X\cong X$, and
\item
\emph{absorbing} if $X$ is both right and left absorbing.
\end{itemize}
\end{defn}

Clearly, if $\cC$ admits an absorbing object, then such an object is unique up to (non-canonical) isomorphism.
Note also that if a category has both right absorbing and left absorbing objects, then any such object is in fact absorbing.

If $\cC$ is equipped with a conjugation, then $X$ is right absorbing if and only if $\overline{X}$ is left absorbing.
In this case, any right absorbing object is automatically absorbing, and isomorphic to its conjugate.
By taking $Y=1\oplus 1$, we can also readily see that any absorbing object satisfies $X\oplus X \cong X$.\bigskip

Let $\mathsf{Hilb}$ be the category of separable Hilbert spaces.

\begin{ex}
The Hilbert space $\ell^2(\mathbb N)$ is absorbing in $\mathsf{Hilb}$.
\end{ex}

\begin{ex}
If $\cC$ is a unitary fusion category, then
the object
\[
\bigoplus_{x\in\Irr(\cC)} x\otimes \ell^2(\mathbb N)
\]
of $\cC\otimes_{\mathsf{Vec}}\mathsf{Hilb}$ is absorbing.
Indeed, for any simple objects $y$ and $z$ of $\cC$, there exists an $x$ such that $z$ occurs as a summand of $x\otimes y$.
The object $y\otimes (\bigoplus_{x\in\Irr(\cC)} x)$ therefore contains each simple object at least once. It follows that $y\otimes (\bigoplus_{x\in\Irr(\cC)} x\otimes \ell^2(\mathbb N))$ contains each simple object infinitely many times.
The same remains true when $y$ gets replaced by an arbitrary non-zero object of $\cC\otimes_{\mathsf{Vec}}\mathsf{Hilb}$.
\end{ex}

\begin{ex}
Let $G$ be an infinite countable group, and let 
$\Rep(G)$ denote the category of unitary representation of $G$ whose underlying Hilbert spaces is separable.
Then
\[
\ell^2(G)\otimes \ell^2(\mathbb N)
\]
is absorbing in $\Rep(G)$.
Indeed, if $V$ is a unitary representation with orthonormal basis $\{v_i\}_{i\in I}$, then 
$e_g\otimes e_i \mapsto (g\cdot v_i)\otimes e_g$
defines a unitary isomorphism
$\ell^2(G)\otimes \ell^2(I)\to V\otimes \ell^2(G)$.
It follows that $V\otimes \ell^2(G)\otimes \ell^2(\mathbb N)\cong\ell^2(G)\otimes \ell^2(I\times \N)\cong\ell^2(G)\otimes \ell^2(\mathbb N)$.
\end{ex}

Let $R$ be a separable factor and let $\Bim(R)$ be the category of $R$-$R$-bimodules whose underlying Hilbert space is separable.
Let also $\mathrm{Mod}(R)$ be the category of left $R$-modules whose underlying Hilbert space is separable.
We say that $H\in\mathrm{Mod}(R)$ is \emph{infinite} if it is non-zero and satisfies $H\oplus H\cong H$.
It is well known that an infinite module exists, and is unique up to isomorphism.

\begin{ex}
\label{ex:Existence}
The bimodule
\[
{\sb{R}L^2(R)}\otimes \ell^2(\bbN) \otimes L^2(R)_R
\]
is absorbing in $\Bim(R)$.
To see that, let $\sb{R}H_R \in \Bim(R)$ be any non-zero bimodule.
The following two modules are infinite, and therefore isomorphic:
$\sb{R} H \boxtimes_R L^2(R) \otimes \ell^2(\bbN)$
and
${\sb{R}L^2(R)} \otimes \ell^2(\bbN)$.
It follows that
$\sb{R} H \boxtimes_R L^2(R) \otimes \ell^2(\bbN)\otimes L^2(R)_R \,\cong\, {\sb{R}L^2(R)} \otimes \ell^2(\bbN)\otimes L^2(R)_R$.
\end{ex}

\begin{rem}
If we had taken $\Bim(R)$ to be the category of \emph{all} bimodules, with no restriction on cardinality, then it would not admit an absorbing object (and similarly for the previous examples).
\end{rem}

Absorbing objects are useful because \emph{they control half-braidings}:

\begin{prop}
\label{prop:DeltaDeterminesHalfBraidings}
Let $\Omega$ be an absorbing object of $\cC$, and let $(X,e_X)$ be an object of $\cC'$. Then $e_X$ is completely determined by its value on $\Omega$.
\end{prop}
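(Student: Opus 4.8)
The plan is to show that if two half-braidings $e_X$ and $e_X'$ on the same underlying object $X$ satisfy $e_{X,\Omega}=e'_{X,\Omega}$, then $e_{X,y}=e'_{X,y}$ for every $y\in\cC$; this is exactly what ``$e_X$ is completely determined by its value on $\Omega$'' means. Write $F\colon\cC\to\Bim(R)$ for the ambient representation, so that the components are $e_{X,y}\colon X\boxtimes F(y)\to F(y)\boxtimes X$. The argument combines three ingredients: naturality of the half-braiding in $y$, the hexagon axiom, and the no-zero-divisors property of $\Bim(R)$.

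First I would fix a nonzero object $y\in\cC$ and use that $\Omega$ is left absorbing to choose an isomorphism $\psi_y\colon\Omega\xrightarrow{\ \sim\ }y\otimes\Omega$ in $\cC$. Naturality of $e_X$ applied to $\psi_y$ then expresses $e_{X,y\otimes\Omega}$ purely in terms of $e_{X,\Omega}$:
\[
e_{X,y\otimes\Omega}=(F(\psi_y)\boxtimes\id_X)\circ e_{X,\Omega}\circ(\id_X\boxtimes F(\psi_y)^{-1}).
\]
(Here $\psi_y$ is fixed data of $\cC$ and does not depend on the half-braiding.) Next, reading the hexagon axiom in the definition of $\cZ_\cD(\cC)$ with $z=\Omega$ gives
\[
e_{X,y\otimes\Omega}=(\mu_{y,\Omega}\boxtimes\id_X)\circ(\id_{F(y)}\boxtimes e_{X,\Omega})\circ(e_{X,y}\boxtimes\id_{F(\Omega)})\circ(\id_X\boxtimes\mu_{y,\Omega}^{-1}).
\]
Since $e_{X,\Omega}$ and the coherences $\mu_{y,\Omega}$ are invertible, I can solve this identity for $e_{X,y}\boxtimes\id_{F(\Omega)}$; substituting the previous display shows that $e_{X,y}\boxtimes\id_{F(\Omega)}$ is completely determined by $e_{X,\Omega}$ together with the ambient (half-braiding-independent) data $\psi_y$ and $\mu_{y,\Omega}$.

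Finally I would invoke the no-zero-divisors property of $\Bim(R)$. Because $\Omega\neq0$ and the representation $F$ is faithful, $F(\Omega)$ is a nonzero object of $\Bim(R)$, so by the Example asserting that $\Bim(R)$ has no zero-divisors the map $(-)\boxtimes\id_{F(\Omega)}$ is injective on the relevant morphism spaces. Hence $e_{X,y}$ itself is determined by $e_{X,\Omega}$. The case $y=0$ is trivial, and by additivity of the half-braiding this covers all objects of $\cC$.

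The main obstacle is the last cancellation step: the hexagon isolates $e_{X,y}$ only in the form $e_{X,y}\boxtimes\id_{F(\Omega)}$, so the proof genuinely depends on the injectivity of $(-)\boxtimes\id_{F(\Omega)}$, which is precisely where the no-zero-divisors hypothesis on $\Bim(R)$ (equivalently the faithfulness of Connes fusion from Lemma~\ref{lem: fusion is faithful}) is used. Everything else is a formal manipulation of the half-braiding axioms.
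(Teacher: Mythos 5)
Your proof is correct and follows essentially the same route as the paper's: naturality of the half-braiding applied to a chosen isomorphism $y\otimes\Omega\cong\Omega$ (which exists by absorption), the hexagon axiom at $z=\Omega$ to isolate $e_{X,y}\boxtimes\id_{F(\Omega)}$, and the no-zero-divisors property of $\Bim(R)$ to cancel $-\boxtimes\id_{F(\Omega)}$. The only differences are cosmetic: you carry the coherences $F$ and $\mu$ explicitly and orient the absorbing isomorphism as $\Omega\to y\otimes\Omega$ rather than $y\boxtimes\Omega\to\Omega$.
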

\begin{proof}
Let $Y$ be a non-zero object of $\cC$.
Since $e_X$ is a half-braiding, we have a commutative diagram
\[
\xymatrix@C=.5cm{
&
Y \boxtimes X \boxtimes \Omega
\ar[dr]^{\,\,\,\,\id_Y\boxtimes\, e_{X,\Omega}}
&
\\
X\boxtimes Y\boxtimes \Omega\ar[rr]^{e_{X,Y\boxtimes\Omega}}
\ar[ur]^{e_{X,Y}\boxtimes\, \id_\Omega\,\,\,\,\,}
&&
Y \boxtimes \Omega \boxtimes X.
}
\]
Fix an isomorphism $\phi:Y\boxtimes \Omega\to \Omega$.
The following square is commutative
\[
\xymatrix@C=1.2cm{
X\boxtimes (Y\boxtimes \Omega)\ar[rr]^{e_{X,Y\boxtimes\Omega}} \ar[d]^{\id_X \boxtimes\, \phi}
&&
(Y\boxtimes \Omega)\boxtimes X \ar[d]^{\phi\, \boxtimes \id_X}
\\
X\boxtimes \Omega\ar[rr]^{e_{X,\Omega}} 
&&
\Omega\boxtimes X
}
\]
and so we get an equation
\[
e_{X,Y}\boxtimes \id_\Omega=(\id_Y\boxtimes e_{X,\Omega}^{-1})\circ(\phi^{-1} \boxtimes \id_X)\circ e_{X,\Omega}\circ(\id_X \boxtimes \phi).
\]
In particular, we see that $e_{X,Y}\boxtimes \id_\Omega$ is completely determined by $e_{X,\Omega}$.
Since $\Bim(R)$ has no zero-divisors,
$e_{X,Y}$ is completely determined by $e_{X,Y}\boxtimes \id_\Omega$.
Putting those two facts together, we see that $e_{X,Y}$ is completely determined by $e_{X,\Omega}$.
\end{proof}

\subsection{The absorbing object of  \texorpdfstring{$\cC'$}{C'}}

We now return to our usual setup, which is that of a separable factor $R$ 
equipped with a fully faithful representation $\cC\to \Bim(R)$ of some unitary fusion category $\cC$.
Our next goal is to show that $\cC'$ admits absorbing objects.
Recall the construction
\[
\quad
\underline\Delta:\Bim(R)\to \cC'
\qquad
\underline\Delta(\Lambda)=(\Delta(\Lambda),e_{\Delta(\Lambda)})
\]
from Section \ref{sec: functorial construction of objects in C'}.

\begin{thm}
\label{thm:Absorbing}
The functor $\underline\Delta$ sends absorbing objects to absorbing objects.
In particular, the category $\cC'$ admits absorbing objects.
\end{thm}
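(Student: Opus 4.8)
The plan is to show that $\underline\Delta$ turns the tensor product in $\cC'$ into Connes fusion with $\Lambda$. Concretely, I claim that for every object $(Y,e_Y)\in\cC'$ there is a unitary isomorphism
\[
\Theta_Y:\ \underline\Delta(\Lambda)\boxtimes (Y,e_Y)\ \xrightarrow{\ \simeq\ }\ \underline\Delta(\Lambda\boxtimes Y)
\]
in $\cC'$, natural in $\Lambda$. The underlying bimodule of the left-hand side is $\Delta(\Lambda)\boxtimes Y=\bigoplus_{x\in\Irr(\cC)} x\boxtimes\Lambda\boxtimes\overline x\boxtimes Y$, while that of the right-hand side is $\bigoplus_{x\in\Irr(\cC)} x\boxtimes\Lambda\boxtimes Y\boxtimes \overline x$. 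I would take $\Theta_Y$ to be the direct sum over $x$ of the maps $\id_{x\boxtimes\Lambda}\boxtimes e_{Y,\overline x}^{-1}$, which slide the $Y$-strand from the right of each $\overline x$-leg to its left using the half-braiding of $Y$. Since the $e_{Y,\overline x}$ are unitary (objects of $\cC'$ carry unitary half-braidings), $\Theta_Y$ is manifestly a unitary isomorphism of $R$-$R$-bimodules.

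First I would verify that $\Theta_Y$ is a morphism in $\cC'$, i.e. that it intertwines the tensor-product half-braiding $e_{\Delta(\Lambda)\boxtimes Y,a}=(e_{\Delta(\Lambda),a}\boxtimes\id_Y)\circ(\id_{\Delta(\Lambda)}\boxtimes e_{Y,a})$ with the canonical half-braiding $e_{\Delta(\Lambda\boxtimes Y),a}$ of (\ref{eq:HalfBraiding}). The point is that in $e_{\Delta(\Lambda\boxtimes Y),a}$ the whole block $\Lambda\boxtimes Y$ is inert: the mobile object $a$ is absorbed at the $\overline x$-vertex and re-emitted at the $x$-vertex, never crossing $Y$. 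In $e_{\Delta(\Lambda)\boxtimes Y,a}$, by contrast, $a$ genuinely crosses $Y$ via $e_{Y,a}$ before the fusion vertices act. Tracking the position of the $Y$-strand through the two composites $(\id_a\boxtimes\Theta_Y)\circ e_{\Delta(\Lambda)\boxtimes Y,a}$ and $e_{\Delta(\Lambda\boxtimes Y),a}\circ(\Theta_Y\boxtimes\id_a)$, their equality reduces to the hexagon axiom for $e_Y$ together with the naturality of $e_Y$ with respect to the trivalent vertices (which are morphisms of $\cC$): crossing $a$ past $Y$ and then sliding $Y$ past the emitted leg $\overline y$ is the same as sliding $Y$ past the incoming leg $\overline x$ and keeping it inert through the vertex. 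This bookkeeping of crossings is the one genuinely computational step, and I expect it to be the main obstacle; everything else is formal.

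Granting the lemma, the theorem follows quickly. Let $\Lambda$ be absorbing in $\Bim(R)$ and let $(Y,e_Y)$ be any non-zero object of $\cC'$, so that $Y\neq 0$ in $\Bim(R)$. Right-absorbingness of $\Lambda$ gives $\Lambda\boxtimes Y\cong\Lambda$, and applying the functor $\underline\Delta$ to this isomorphism yields $\underline\Delta(\Lambda\boxtimes Y)\cong\underline\Delta(\Lambda)$; composing with $\Theta_Y$ gives $\underline\Delta(\Lambda)\boxtimes (Y,e_Y)\cong\underline\Delta(\Lambda)$. Since $\underline\Delta(\Lambda)$ is non-zero (its $x=1$ summand is $\Lambda\neq 0$), this shows $\underline\Delta(\Lambda)$ is right absorbing in $\cC'$. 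As $\cC'$ is a bi-involutive tensor category it carries a conjugation, and $\cC'$ has no zero-divisors because $\Bim(R)$ does (the relevant $\Hom$-map is a restriction of an injective one); hence, as noted above, a right absorbing object is automatically absorbing, so $\underline\Delta(\Lambda)$ is absorbing. Finally, $\Bim(R)$ admits an absorbing object by Example~\ref{ex:Existence}, and its image under $\underline\Delta$ is then an absorbing object of $\cC'$, establishing the second assertion.
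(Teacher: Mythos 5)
Your proposal is correct, but it takes a genuinely different route from the paper's. The paper proves the theorem via Lemma~\ref{lem:CornerToWhole}: it exhibits $\underline\Omega:=\underline\Delta(\Lambda)\boxtimes X$ inside $\underline\Delta(\Omega)$ by an explicit isometry built from the half-braiding and the coevaluations, and then upgrades the resulting subobject to the whole of $\underline\Delta(\Omega)$ using the fact that $\End_{\cC'}(\underline\Delta(\Omega))$ is a factor (Theorem~\ref{thm:Factor}) in which the corresponding projection is infinite; it therefore leans on the operator-algebraic machinery of Section~\ref{sec: The endomorphism algebra is a factor}. You instead prove a projection formula $\underline\Delta(\Lambda)\boxtimes(Y,e_Y)\cong\underline\Delta(\Lambda\boxtimes Y)$ via $\bigoplus_x \id_{x\boxtimes\Lambda}\boxtimes e_{Y,\overline x}^{-1}$, and your reduction of the intertwining property to the hexagon plus naturality does check out: with $v:\overline x\otimes a\to\overline y$ the fusion vertex of (\ref{eq:HalfBraiding}), the needed identity is $e_{Y,\overline y}^{-1}\circ(v\boxtimes\id_Y)\circ(\id_{\overline x}\boxtimes e_{Y,a})=(\id_Y\boxtimes v)\circ(e_{Y,\overline x}^{-1}\boxtimes\id_a)$, which after clearing inverses is exactly $(v\boxtimes\id_Y)\circ e_{Y,\overline x\otimes a}=e_{Y,\overline y}\circ(\id_Y\boxtimes v)$ (the splitting vertex on the $x$-leg never meets $Y$). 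The remaining steps --- absorbingness of $\Lambda\boxtimes Y$ in $\Bim(R)$, functoriality of $\underline\Delta$, absence of zero-divisors in $\cC'$, and the conjugation promoting right absorbing to absorbing --- are all as in the paper's surrounding discussion. Your route is softer and self-contained for this particular theorem, since it bypasses factoriality entirely, and the projection formula is a reusable structural fact; the paper's route costs nothing extra in practice because the factoriality machinery is developed anyway for later use (e.g.\ Lemma~\ref{lem: RL half relative commutant}), and its Lemma~\ref{lem:CornerToWhole} yields the additional information that any $\underline\Omega\in\cC'$ with absorbing underlying bimodule and $\underline\Omega\oplus\underline\Omega\cong\underline\Omega$ is isomorphic to $\underline\Delta(\Omega)$.
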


The proof of this theorem will depend on Theorem \ref{thm:Factor}, proved in next section, according to which the endomorphism algebra of 
$\underline\Delta(\Lambda)$ is a factor whenever $\Lambda$ is absorbing in $\Bim(R)$.
We begin with the following technical lemma:

\begin{lem}
\label{lem:CornerToWhole}
Suppose that $\underline \Omega=(\Omega, e_\Omega)\in \cC'$ is such that $\Omega$ is absorbing in $\Bim(R)$,
and such that $\underline \Omega\oplus \underline \Omega\;\!\cong\;\! \underline \Omega$ in $\cC'$.
Then $\underline \Omega$ is (non-canonically) isomorphic to $\underline{\Delta}(\Omega)$.
\end{lem}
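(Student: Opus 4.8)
The plan is to realise $\underline\Omega$ as the corner of $\underline\Delta(\Omega)$ cut out by a projection in the endomorphism algebra $M:=\End_{\cC'}(\underline\Delta(\Omega))$, and then to argue that this projection is infinite inside the factor $M$, so that it must be equivalent to the identity. As a first (non-essential but orienting) observation, at the level of underlying bimodules $\Delta(\Omega)=\bigoplus_{x\in\Irr(\cC)} x\boxtimes\Omega\boxtimes\overline x\cong\Omega$, since $\Omega$ is absorbing (so each summand is $\cong\Omega$), $\Irr(\cC)$ is finite, and $\Omega\oplus\Omega\cong\Omega$. What I actually use is the $\cC'$-level statement: because $\underline\Delta$ is additive and $\Omega\oplus\Omega\cong\Omega$, we get $\underline\Delta(\Omega)\oplus\underline\Delta(\Omega)\cong\underline\Delta(\Omega)$ in $\cC'$.

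The key step is to produce an isometric inclusion $\iota\colon\underline\Omega\to\underline\Delta(\Omega)$ in $\cC'$. I would define it by mapping $\Omega$ into the $x$-summand $x\boxtimes\Omega\boxtimes\overline x$ via the half-braiding $e_\Omega$ and the coevaluations of $\cC$:
\[
\iota\,:=\,\frac{1}{\sqrt{\dim\cC}}\sum_{x\in\Irr(\cC)}\sqrt{d_x}\;(\id_x\boxtimes e_{\Omega,\overline x}^{-1})\circ(\coev_x\boxtimes\id_\Omega),
\]
where $e_{\Omega,\overline x}^{-1}\colon\overline x\boxtimes\Omega\to\Omega\boxtimes\overline x$ acts on the last two tensorands. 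Taking $\iota^*$ to be the evident adjoint (well defined since the half-braiding is unitary), the zig-zag identity $\coev_x^*\circ\coev_x=d_x$ together with orthogonality of the summands yields $\iota^*\circ\iota=\tfrac{1}{\dim\cC}\sum_x d_x^2\cdot\id_\Omega=\id_\Omega$. Thus $\iota$ is an isometry and $p:=\iota\circ\iota^*\in M$ is a projection with $p\cdot\underline\Delta(\Omega)\cong\underline\Omega$. The real content is that $\iota$ is a morphism in $\cC'$, i.e. that it intertwines $e_\Omega$ with the half-braiding $e_{\Delta(\Omega)}$ of \eqref{eq:HalfBraiding}; I expect this to be the main obstacle. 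It should follow from a graphical computation of exactly the flavour of the proofs of Proposition \ref{Prop:eDeltaHalfBraiding} and Theorem \ref{thm: endo}: push the strand $a$ past the inserted $\coev_x$ using the half-braiding hexagon for $e_\Omega$, and then resum using the I=H and Bigon relations, with the normalisations $\sqrt{d_x}$ and the $\sqrt{d_a^{-1}}$ appearing in $e_{\Delta,a}$ conspiring to match the two sides.

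Granting that $\iota\in\Hom_{\cC'}(\underline\Omega,\underline\Delta(\Omega))$, I would conclude as follows. By hypothesis $\underline\Omega\cong\underline\Omega\oplus\underline\Omega$; choosing isometries realising this decomposition inside $\End_{\cC'}(\underline\Omega)\cong pMp$ exhibits two orthogonal subprojections of $p$ each equivalent to $p$, so $p$ is an infinite projection in $M$. Since a projection dominating an infinite projection is infinite, $1_M\ge p$ forces $1_M$ to be infinite, i.e. $M$ is properly infinite. By Theorem \ref{thm:Factor} (applicable because $\Omega$ is absorbing) $M$ is a factor; moreover $M$ is a von Neumann algebra with separable predual, being the weakly closed $*$-subalgebra of $B(\Delta(\Omega))$ of endomorphisms commuting with the unitary half-braiding, and $\Delta(\Omega)$ is separable. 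In a properly infinite factor with separable predual every infinite projection is equivalent to the identity (all nonzero projections have central support $1$, so Murray--von Neumann comparison applies), whence $p\sim 1_M$. Unwinding this equivalence of projections gives a (non-canonical) isomorphism $\underline\Omega\cong p\cdot\underline\Delta(\Omega)\cong\underline\Delta(\Omega)$ in $\cC'$, as desired.
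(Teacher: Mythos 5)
Your proposal is essentially the paper's proof. The embedding you write down agrees, up to the overall normalisation $1/\sqrt{\dim(\cC)}$, with the map $\varphi=\sum_{x}\sqrt{d_x}\,(e_{\Omega,x}\boxtimes\id_{\overline x})\circ(\id_\Omega\boxtimes\coev_x)$ used in the paper (the two are identified by the hexagon axiom together with naturality of $e_\Omega$ applied to $\coev_x:1\to x\boxtimes\overline x$), and your endgame --- the corner projection is infinite because $\underline\Omega\oplus\underline\Omega\cong\underline\Omega$, and an infinite projection in the factor $\End_{\cC'}(\underline\Delta(\Omega))$ is Murray--von Neumann equivalent to $1$ --- is exactly the paper's, with your separability remarks making explicit what the paper leaves implicit. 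The one step you leave as ``granted,'' that $\iota$ intertwines $e_\Omega$ with $e_{\Delta(\Omega)}$, is precisely the computation the paper carries out; it is a short application of the Fusion relation, so your sketch does go through, though as written your proof is incomplete without it. (The paper does not normalise $\varphi$ to an isometry; it uses injectivity plus polar decomposition in $\cC'$ instead, which is an immaterial difference.)
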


\begin{proof}
Let $\varphi:\Omega \to \Delta(\Omega)$
be the map given by
$$
\varphi\,:=
\sum_{x\in\Irr(\cC)}\sqrt{d_x}\,\,\,
\begin{tikzpicture}[baseline=-.1cm]
	\draw[very thick] (-.15,-.8) -- (-.15,0);
	\draw[very thick] (.2,.8) -- (.2,0);
	\draw (-.15,0) -- (-.15,.8);
	\draw (.2,-.3) -- (.2,-.4) arc (-180:0:.2cm) -- (.6,.8);
	\roundNbox{unshaded}{(.01,0)}{.3}{.15}{.15}{$e_{\Omega,x}$}
	\node at (-.2,-1) {\scriptsize{$\Omega$}};
	\node at (-.2,1) {\scriptsize{$x$}};
	\node at (.2,1) {\scriptsize{$\Omega$}};
	\node at (.6,1) {\scriptsize{$\overline{x}$}};
\end{tikzpicture}
\,=
\sum_{x\in \Irr(\cC)}\sqrt{d_x}\,
\begin{tikzpicture}[baseline=-.1cm]
	\draw (-.3,.8) -- (-.3,.6) .. controls ++(270:.5cm) and ++(270:1.5cm) .. (.6,.4) -- (.6,.8);
	\draw[super thick, white] (0,-.8) -- (0,.8);
	\draw[very thick] (0,-.8) -- (0,.8);
	\node at (0,-1) {\scriptsize{$\Omega$}};
	\node at (-.3,1) {\scriptsize{$x$}};
	\node at (0,1) {\scriptsize{$\Omega$}};
	\node at (.6,1) {\scriptsize{$\overline{x}$}};
\end{tikzpicture}
.
$$
By the Fusion relation, this map is compatible with the half-braidings:
$$
(\id_y\boxtimes \varphi)\circ e_{\Omega,y}
\,=\,
\sum_{x}\sqrt{d_x}\!
\begin{tikzpicture}[baseline=-.2cm]
	\draw (-.8,.8) -- (.6,-1);
	\draw (-.3,.8) -- (-.3,.6) .. controls ++(270:.5cm) and ++(270:1.5cm) .. (.6,.6) -- (.6,.8);
	\draw[super thick, white] (0,-1) -- (0,.8);
	\draw[very thick] (0,-1) -- (0,.8);
	\node at (0,-1.2) {\scriptsize{$\Omega$}};
	\node at (-.3,1) {\scriptsize{$x$}};
	\node at (-.8,1) {\scriptsize{$y$}};
	\node at (0,1) {\scriptsize{$\Omega$}};
	\node at (.6,1) {\scriptsize{$\overline x$}};
	\node at (.6,-1.2) {\scriptsize{$y$}};
\end{tikzpicture}
=
\sum_{x,z}
\sqrt{d_zd^{-1}}\hspace{-.6cm}{\phantom{d}}_y^{\phantom{-1}}
\begin{tikzpicture}[baseline=-.2cm]
	\draw (-.6,.8) -- (-.3,.4);
	\draw (.6,0) -- (1.2,-1);
	\draw (-.3,.8) -- (-.3,.4) .. controls ++(270:.5cm) and ++(270:1.5cm) .. (.6,0) -- (.6,.8);
	\draw[super thick, white] (0,-1) -- (0,.8);
	\draw[very thick] (0,-1) -- (0,.8);
	\draw[fill=\betacolor] (-.3,.4) circle (.05cm);
	\draw[fill=\betacolor] (.6,0) circle (.05cm);
	\node at (0,-1.2) {\scriptsize{$\Omega$}};
	\node at (-.3,1) {\scriptsize{$x$}};
	\node at (-.6,1) {\scriptsize{$y$}};
	\node at (-.4,0) {\scriptsize{$z$}};
	\node at (0,1) {\scriptsize{$\Omega$}};
	\node at (.6,1) {\scriptsize{$\overline x$}};
	\node at (1.2,-1.2) {\scriptsize{$y$}};
	\node at (.4,-.25){\scriptsize{$\overline z$}};
\end{tikzpicture}
\!\!=\,
e_{\Delta(\Omega),y}\circ (\varphi\boxtimes\id_y),
$$
and therefore defines a morphism $\varphi:\underline\Omega\to \underline\Delta(\Omega)$ in $\cC'$.

The coevaluation map $\mathrm{coev}_x:L^2R\to x\boxtimes \overline x$ is, up to a constant, the inclusion of a direct summand. So $\varphi$ is manifestly injective.
By polar decomposition in $\cC'$, the map $\varphi$ therefore induces a unitary isomorphism between 
$\underline\Omega$ and a certain subobject of $\underline\Delta(\Omega)$.

Now, the subobjects of $\underline\Delta(\Omega)$ are in one-to-one correspondence with the projections in $M:=\End_{\cC'}(\underline\Delta(\Omega))$, which is a factor by Theorem \ref{thm:Factor}.
Let $p\in M$ be the projection corresponding to $\underline\Omega$.
Since $\underline\Omega\oplus \underline\Omega\cong \underline\Omega$ and $\underline\Omega\neq 0$, that projection is infinite (its range is an infinite module).
So there is a partial isometry $u\in M$ with $p=uu^*$ and $u^*u=1$.
The latter provides an isomorphism $u:\underline\Delta(\Omega)\to \underline\Omega$ in $\cC'$.
\end{proof}

\begin{proof}[Proof of Theorem \ref{thm:Absorbing}]
Let $\Lambda$ be an absorbing object of $\Bim(R)$ and let $X$ be an arbitrary non-zero object of $\cC'$.
We wish to show that $\underline \Omega:=\underline \Delta(\Lambda)\boxtimes X$ is isomorphic to $\underline \Delta(\Lambda)$.
Let $\Omega$ denote the underlying object of $\underline\Omega$.
If we could show that $\underline \Omega$ satisfies the hypotheses of Lemma \ref{lem:CornerToWhole}, then we could reason as follows:
\[
\underline\Delta(\Lambda)\boxtimes X\,=\,\,\underline \Omega
\,\,\cong\,\,\underline\Delta(\Omega)
\,\,\cong\,\, \underline\Delta(\Lambda),
\]
where the last isomorphism holds because $\Omega$ and $\Lambda$ are both absorbing in $\Bim(R)$.

So let us show that $\underline \Omega$ satisfies the hypotheses of Lemma \ref{lem:CornerToWhole}.
Since $\Lambda$ is absorbing in $\Bim(R)$, the object $\Omega=\bigoplus_x x\boxtimes \Lambda\boxtimes \overline x\boxtimes X$ is clearly absorbing in $\Bim(R)$.
And since $\Lambda\oplus \Lambda \cong \Lambda$ in $\Bim(R)$ and $\Lambda\mapsto\underline \Delta(\Lambda)\boxtimes X$ is a linear functor, the same holds true for $\underline\Omega$, namely,
$\underline \Omega\oplus \underline \Omega\;\!\cong\;\! \underline \Omega$.
\end{proof}

\subsection{The endomorphism algebra is a factor}
\label{sec: The endomorphism algebra is a factor}

The goal of this section is to prove that when $\Lambda$ is absorbing,
the endomorphism algebra of $\underline\Delta(\Lambda)$ is a factor (a von Neumann algebra with trivial center).
We emphasize the fact that, for the above result to hold, it is essential that the representation $\cC\to\Bim(R)$ be fully faithful
(this is used in the last paragraph of the proof of Theorem~\ref{thm: trivial relative commutant}).

\begin{thm}
\label{thm:Factor}
If $\Lambda$ is absorbing in $\Bim(R)$, then
$\End_{\cC'}(\underline\Delta(\Lambda))$
is a factor.
\end{thm}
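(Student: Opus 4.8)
The plan is to show that the center of $M:=\End_{\cC'}(\underline\Delta(\Lambda))$ is trivial by producing a unital subalgebra whose relative commutant in $M$ is already $\C$. By Theorem~\ref{thm: endo} I would work in the concrete model $M\cong\bigoplus_{a\in\Irr(\cC)}\Hom_{\Bim(R)}(\Lambda\boxtimes a,a\boxtimes\Lambda)$ with the multiplication (\ref{eq: (fg)_a}) and adjoint (\ref{eq: f*}); note $M$ is a von Neumann algebra, being the weakly closed $*$-subalgebra of $\End_{\Bim(R)}(\Delta)$ cut out by the condition of commuting with the half-braidings $e_{\Delta,a}$. First I single out the $a=1$ summand $B:=\Hom_{\Bim(R)}(\Lambda,\Lambda)=\End_{\Bim(R)}(\Lambda)$. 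Reading (\ref{eq: (fg)_a}) with $b=c=1$ shows $B$ is closed under the product and that the unit of $M$ is $\id_\Lambda\in B$, so $B\hookrightarrow M$ is a unital von Neumann subalgebra. Since $\Lambda$ is absorbing, Example~\ref{ex:Existence} lets me identify $\Lambda\cong L^2R\otimes\ell^2(\N)\otimes L^2R$ as an $R$-$R$-bimodule; writing the three tensor legs as $P,K,Q$, the bimodule action is generated by $\ell_P(R)$ and $r_Q(R)$, so $B=r_P(R)\,\bar\otimes\,B(K)\,\bar\otimes\,\ell_Q(R)$. In particular $B$ contains a copy of $B(\ell^2(\N))$ and two commuting copies of $R$; this largeness, rather than factoriality of $B$ per se, is what will be used.

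Second, I compute the relative commutant $B'\cap M$. Specializing (\ref{eq: (fg)_a}) to $\beta\in B$ (supported on the unit) against an arbitrary $z=(z_a)\in M$, the two products collapse to $(\beta\cdot z)_a=(\id_a\boxtimes\beta)\circ z_a$ and $(z\cdot\beta)_a=z_a\circ(\beta\boxtimes\id_a)$, so $z\in B'\cap M$ iff $(\id_a\boxtimes\beta)\circ z_a=z_a\circ(\beta\boxtimes\id_a)$ for every $\beta\in\End_{\Bim(R)}(\Lambda)$ and every $a$. Fusing with the identification above gives $\Lambda\boxtimes a\cong P\otimes K\otimes a$ and $a\boxtimes\Lambda\cong a\otimes K\otimes Q$, and one tracks that on the domain $\beta$ acts as $r_P(R)\otimes B(K)\otimes\ell_a(R)$ while on the codomain it acts as $r_a(R)\otimes B(K)\otimes\ell_Q(R)$. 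Commuting with $1\otimes B(K)\otimes 1$ (the same $K$ on both sides) forces $z_a=\zeta_a\otimes\id_K$ for some $\zeta_a\colon P\otimes a\to a\otimes Q$. The remaining generators $r_P(R)$ and $\ell_Q(R)$ of $B$, together with the intrinsic $R$-$R$-bimodule structure carried by any morphism of $\Bim(R)$, then say exactly that $\zeta_a$ intertwines the full bimodule $P=L^2R$ of the domain with the $a$-leg of the codomain, and the $a$-leg of the domain with the full bimodule $Q=L^2R$ of the codomain. By the standard identification of intertwiners between external tensor products of bimodules over the factor $R$, this means $\zeta_a\in\Hom_{\Bim(R)}(L^2R,a)\otimes\Hom_{\Bim(R)}(a,L^2R)$.

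Finally I invoke full faithfulness of $\cC\to\Bim(R)$. Since $L^2R$ is the image of $1_\cC$ and $a\in\Irr(\cC)$, full faithfulness gives $\Hom_{\Bim(R)}(L^2R,a)=\Hom_\cC(1,a)$, which is $\C$ for $a=1$ and $0$ otherwise. Hence $z_a=0$ for all $a\neq 1$ and $z_1$ is a scalar multiple of $\id_\Lambda$, so $B'\cap M=\C$. As $B\subseteq M$ gives $M'\subseteq B'$, we get $Z(M)=M\cap M'\subseteq B'\cap M=\C$, and therefore $M$ is a factor.

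The main obstacle is the second paragraph: rigorously reducing the relation $(\id_a\boxtimes\beta)\circ z_a=z_a\circ(\beta\boxtimes\id_a)$, for all $\beta\in\End_{\Bim(R)}(\Lambda)$, to a statement about $\Hom$-spaces in $\cC$. This is where the absorbing hypothesis enters decisively, since it makes $\End(\Lambda)$ large enough (the $B(\ell^2(\N))$ factor forces the decomposition $z_a=\zeta_a\otimes\id_K$, and the two commuting copies of $R$ pin down the ``swapped'' bimodule matching), and where the mutual-commutant property of the standard form $L^2R$ and full faithfulness are used; everything else is bookkeeping with the graphical calculus of Section~\ref{sec:The endomorphism algebra} and routine von Neumann algebra facts.
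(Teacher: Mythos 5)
Your proposal is correct and follows essentially the same route as the paper: the paper also proves the stronger statement that $\End_{\Bim(R)}(\Lambda)\subset\End_{\cC'}(\underline\Delta(\Lambda))$ is irreducible (Theorem~\ref{thm: trivial relative commutant}), by exploiting coarseness of $\Lambda$ to view the components $f_a$ as $(R\,\bar\otimes\,R^{\op})$-bimodule maps $1\otimes a\to a\otimes 1$ and then invoking full faithfulness to kill all $a\neq 1$. The only cosmetic difference is that the paper first splits off the $B(\ell^2(\N))$ tensor factor to reduce to $\Lambda_0=L^2R\otimes L^2R$, whereas you keep it and use commutation with $B(K)$ to strip the middle leg; the two reductions are equivalent.
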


It will be easier to prove the following stronger result:

\begin{thm}
\label{thm: trivial relative commutant}
If $\Lambda$ is absorbing, then $\End_{\Bim(R)}(\Lambda)$ has trivial commutant in
$\End_{\cC'}(\underline\Delta(\Lambda))$.
In other words,
the inclusion
\begin{equation}\label{eq: irred subfactor}
\End_{\Bim(R)}(\Lambda)\subset \End_{\cC'}(\underline\Delta(\Lambda))
\end{equation}
is an irreducible subfactor.
\end{thm}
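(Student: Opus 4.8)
The plan is to prove the stronger statement that $M' \cap A = \C$, where I abbreviate $M := \End_{\Bim(R)}(\Lambda)$ and $A := \End_{\cC'}(\underline\Delta(\Lambda))$; the irreducible-subfactor claim then follows formally. Indeed, the functor $\underline\Delta$ induces on endomorphisms the inclusion (\ref{eq: irred subfactor}), $g \mapsto \underline\Delta(g) = \sum_{x} \id_x \boxtimes g \boxtimes \id_{\overline x}$, which under the isomorphism of Theorem~\ref{thm: endo} is the map onto the $a=1$ summand ($f_1 = g$, $f_a = 0$ for $a \neq 1$), and is manifestly a unital $*$-homomorphism. Once $M' \cap A = \C$ is established, both $Z(M) = M' \cap M$ and $Z(A)$ sit inside $M' \cap A = \C$, so $M$ and $A$ are automatically factors and the inclusion is irreducible.

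First I would translate the condition $f \in M' \cap A$ into a component-wise condition. Writing $f \leftrightarrow (f_a)_{a \in \Irr(\cC)}$ via Theorem~\ref{thm: endo} and plugging a generator $\underline\Delta(g)$ (supported in degree $1$) into the product formula (\ref{eq: (fg)_a}), the two vertices force $b = a$, $c = 1$ on one side and $b = 1$, $c = a$ on the other, so that commutation with all of $M$ becomes, for each $a$ and all $g \in M$,
\[
(\id_a \boxtimes g) \circ f_a \;=\; f_a \circ (g \boxtimes \id_a), \qquad f_a \in \Hom_{\Bim(R)}(\Lambda \boxtimes a,\, a \boxtimes \Lambda).
\]
Thus it suffices to show this intertwiner space is zero for $a \neq 1$ and reduces to $\C \cdot \id$ for $a = 1$.

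Next I would use that $\Lambda$ is absorbing, hence (Example~\ref{ex:Existence}) coarse of the form $L^2R \otimes_\C \ell^2(\bbN) \otimes_\C L^2R$, so that by the commutation theorem $M = R^{\op} \bar\otimes B(\ell^2(\bbN)) \bar\otimes R$. The core computation is then to track, through the Connes-fusion identifications $\Lambda \boxtimes a \cong L^2R \otimes \ell^2(\bbN) \otimes a$ and $a \boxtimes \Lambda \cong a \otimes \ell^2(\bbN) \otimes L^2R$, where each $R$-handle lands: the ambient $\Bim(R)$-structure of $f_a$ supplies the intertwining of the outer $R$-actions, while membership in $M'$ supplies the intertwining coming from the $R^{\op}$ and $R$ legs of $M$, and the middle factor $B(\ell^2(\bbN))$ lets me strip the multiplicity space, writing $f_a = \id_{\ell^2(\bbN)} \otimes \tilde f_a$ with $\tilde f_a \colon L^2R \otimes_\C a \to a \otimes_\C L^2R$ now constrained on all four $R$-handles. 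Concretely $\tilde f_a$ must carry the $L^2R$-leg of the source to the $a$-leg of the target and the $a$-leg of the source to the $L^2R$-leg of the target, as $R$-$R$-bimodule maps.

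Finally, since $a$ is dualizable and $L^2R$ is the tensor unit, the hom-spaces involved are finite-dimensional and the external-product computation gives $\tilde f_a \in \Hom_{\Bim(R)}(L^2R, a) \otimes_\C \Hom_{\Bim(R)}(a, L^2R)$. Invoking full faithfulness of $\cC \hookrightarrow \Bim(R)$, this equals $\Hom_\cC(1,a) \otimes_\C \Hom_\cC(a,1)$, which vanishes for $a \neq 1$ and is one-dimensional for $a = 1$; hence $f_a = 0$ for $a \neq 1$ and $f_1 \in \C \cdot \id_\Lambda$, so $f \in \C \cdot 1_A$. I expect the main obstacle to be precisely the bookkeeping of the third paragraph: correctly following the four $R$-actions through the two Connes fusions and verifying that the $\Bim(R)$-morphism constraint together with the $M$-commutation constraint pins down every leg, since this is the single step where coarseness of $\Lambda$, the commutation theorem (cf.\ Lemma~\ref{lem: invertible bimodules}), and full faithfulness are all used in an essential way.
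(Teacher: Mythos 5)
Your proposal is correct and follows essentially the same route as the paper: reduce via Theorem \ref{thm: endo} to the componentwise commutation condition, specialize to the coarse absorbing object $L^2R\otimes\ell^2(\N)\otimes L^2R$, strip the $B(\ell^2(\N))$ factor, and then observe that a map $L^2R\otimes a\to a\otimes L^2R$ intertwining all four $R$-actions lies in $\Hom(1,a)\otimes\Hom(a,1)$, which vanishes for $a\neq 1$ by full faithfulness. The only cosmetic difference is that the paper peels off the $\ell^2(\N)$ multiplicity once and for all at the level of the algebras (reducing to $\Lambda_0=L^2R\otimes L^2R$) before running the four-handle bookkeeping, whereas you do it component by component.
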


\begin{proof}
The absorbing object is unique up to isomorphism.
So without loss of generality, we may take $\Lambda$ to be the one from Example \ref{ex:Existence}, namely
\(
\Lambda={\sb{R}L^2(R)}\otimes \ell^2(\bbN) \otimes L^2(R)_R.
\)
Let
\[
\Lambda_0:={\sb{R}L^2R} \otimes L^2R_R.
\]
Writing $H$ for $\ell^2(\bbN)$, we have
\begin{align*}
\End_{\Bim(R)}(\Lambda)\,&\cong\, \End_{\Bim(R)}(\Lambda_0)\,\bar\otimes\,B(H)\quad\text{and}
&
\End_{\cC'}(\underline\Delta(\Lambda))\,&\cong\,\End_{\cC'}(\underline\Delta(\Lambda_0))\,\bar\otimes\,B(H),
\end{align*}
and so
$Z_{\End(\underline\Delta(\Lambda))}(\End(\Lambda))\cong Z_{\End(\underline\Delta(\Lambda_0))}(\End(\Lambda_0))$.
It is therefore equivalent to prove the statement of the theorem for $\Lambda_0$ instead of $\Lambda$.
Recall from Theorem \ref{thm: endo} that
\[
\End_{\cC'}(\underline\Delta(\Lambda_0)) \,\,\cong\,\, 
\bigoplus_{x\in \Irr(\cC)} \Hom_{\Bim(R)}(\Lambda_0\boxtimes x, x\boxtimes\Lambda_0),\vspace{-.2cm}
\]
with product as in (\ref{eq: (fg)_a}).

Let $f=(f_x: \Lambda_0 \boxtimes x \to x\boxtimes \Lambda_0)_{x\in\Irr(\cC)}$ be an element that commutes with every $g\in \End_{\Bim(R)}(\Lambda_0)=R^{\text{op}}\,\bar\otimes\,R$:
\begin{equation}\label{eq: fg = gf}
\qquad\qquad
\begin{tikzpicture}[baseline=0cm]
	\draw[very thick] (0,-1) -- (0,1);
	\draw (-.6,1) -- (.3,.1) .. controls ++(-45:.3cm) and ++(90:.3cm) ..  (.6,-1);
	\roundNbox{unshaded}{(0,.4)}{.3}{0}{0}{$f_x$};
	\roundNbox{unshaded}{(0,-.4)}{.3}{0}{0}{$g$};
	\node at (-.6,1.2) {\scriptsize{$x$}};
	\node at (.6,-1.2) {\scriptsize{$x$}};
\node at (.05,1.2) {\scriptsize{$\Lambda_0$}};
\node at (.05,-1.2) {\scriptsize{$\Lambda_0$}};
\end{tikzpicture}
=
\begin{tikzpicture}[baseline=0cm, xscale=-1, yscale=-1]
	\draw[very thick] (0,-1) -- (0,1);
	\draw (-.6,1) -- (.3,.1) .. controls ++(-45:.3cm) and ++(90:.3cm) ..  (.6,-1);
	\roundNbox{unshaded}{(0,.4)}{.3}{0}{0}{$f_x$};
	\roundNbox{unshaded}{(0,-.4)}{.3}{0}{0}{$g$};
	\node at (-.6,1.2) {\scriptsize{$x$}};
	\node at (.6,-1.2) {\scriptsize{$x$}};
\node at (-.05,1.2) {\scriptsize{$\Lambda_0$}};
\node at (-.05,-1.2) {\scriptsize{$\Lambda_0$}};
\end{tikzpicture}\qquad
\begin{tikzpicture}[baseline=-.1cm]
\node[scale=.9]{$\forall\,x\in \Irr(\cC),\,\,\,\forall\,g\in \End(\Lambda_0).$};
\end{tikzpicture}
\end{equation}
The bimodule $\Lambda_0$ is of the form \eqref{eq: coarse bimodule}, and thus coarse. The action of the algebraic tensor product $R\odot R^{\text{op}}$ (the one which equips it with the structure of an $R$-$R$-bimodule) therefore extends to an action of the spatial tensor product $R\,\bar\otimes\, R^{\text{op}}$.
We may therefore treat $\Lambda_0$ as a left $(R\,\bar\otimes\, R^{\text{op}})$-module.
Writing $1$ for $L^2(R)$, we then have canonical isomorphisms
\begin{align*}
{}_R(x\boxtimes_R\Lambda_0)_R
&\,\cong\,
{}_{R\bar\otimes R^\mathrm{op}}((x\otimes 1)\boxtimes_{R\bar\otimes R^\mathrm{op}}\Lambda_0)
\\
{}_R(\Lambda_0\boxtimes_R x)_R
&\,\cong\,
{}_{R\bar\otimes R^\mathrm{op}}
((1\otimes x)\boxtimes_{R\bar\otimes R^\mathrm{op}}\Lambda_0).
\end{align*}
Under those identifications equation (\ref{eq: fg = gf}) becomes:
\begin{equation}\label{eq: light grey on the right}
\qquad\quad
\begin{tikzpicture}[baseline=-.1cm]
\fill[gray!80] (.2,-1) -- (.2,1) to[rounded corners=10] (-1.05,1) to[rounded corners=10] (-1.05,-1) -- cycle;
\fill[gray!20] (.2,-1) -- (.2,1) to[rounded corners=10] (1,1) to[rounded corners=10] (1,-1) -- cycle;
\draw[very thick] (.2,-1) -- (.2,1);
	\draw (-.6,1) -- (0,.4) -- (-.3,.1) .. controls ++(225:.3cm) and ++(90:.3cm) ..  (-.6,-1);
	\roundNbox{unshaded}{(0,.4)}{.3}{0}{.4}{$f_x$};
	\roundNbox{unshaded}{(0,-.4)}{.3}{0}{.4}{$g$};
\node at (-.65,1.2) {\scriptsize{$x\otimes 1$}};
\node at (-.65,-1.2) {\scriptsize{$1\otimes x$}};
	\node at (.25,1.2) {\scriptsize{$\Lambda_0$}};
	\node at (.25,-1.2) {\scriptsize{$\Lambda_0$}};
\end{tikzpicture}
\,\,\,\,\,=\,\,\,
\begin{tikzpicture}[baseline=-.1cm, yscale=-1]
\fill[gray!80] (.2,-1) -- (.2,1) to[rounded corners=10] (-1.05,1) to[rounded corners=10] (-1.05,-1) -- cycle;
\fill[gray!20] (.2,-1) -- (.2,1) to[rounded corners=10] (1,1) to[rounded corners=10] (1,-1) -- cycle;
	\draw[very thick] (.2,-1) -- (.2,1);
	\draw (-.6,1) -- (0,.4) -- (-.3,.1) .. controls ++(225:.3cm) and ++(90:.3cm) ..  (-.6,-1);
	\roundNbox{unshaded}{(0,.4)}{.3}{0}{.4}{$f_x$};
	\roundNbox{unshaded}{(0,-.4)}{.3}{0}{.4}{$g$};
\node at (-.65,-1.2) {\scriptsize{$x\otimes 1$}};
\node at (-.65,1.2) {\scriptsize{$1\otimes x$}};
	\node at (.25,1.2) {\scriptsize{$\Lambda_0$}};
	\node at (.25,-1.2) {\scriptsize{$\Lambda_0$}};
\end{tikzpicture}\qquad
\begin{tikzpicture}[baseline=-.1cm]
\node[scale=.9]{$\forall\,x\in \Irr(\cC),\,\,\,\forall\,g\in \End(\Lambda_0)$,};
\end{tikzpicture}
\end{equation}
where
$\tikz[baseline=.1cm]{\draw[fill=gray!80, rounded corners=5, very thin, baseline=1cm] (0,0) rectangle (.5,.5);}=R\,\bar\otimes\, R^{\text{op}}$,
$\tikz[baseline=.1cm]{\draw[fill=gray!20, rounded corners=5, very thin] (0,0) rectangle (.5,.5);}=\mathbb C$,
and we have used the string diagram notation for bicategories reviewed in \cite[\S2]{MR3342166}
.

Note that $\Lambda_0 = L^2(R\bar\otimes R^\mathrm{op})$.
We may therefore identify
$(x\otimes 1)\boxtimes_{R\bar\otimes R^\mathrm{op}}\Lambda_0$
with $x\otimes 1$,
and
$(1\otimes x)\boxtimes_{R\bar\otimes R^\mathrm{op}}\Lambda_0$
with $1\otimes x$.
The maps $f_x$ can then be viewed as left $(R\,\bar\otimes\, R^{\text{op}})$-module maps:
\[
f_x: 1\otimes x \to x\otimes 1.
\]
The operators $\id_{1\otimes x}\boxtimes g$ and $\id_{x\otimes 1}\boxtimes g$
which appear on the two sides of (\ref{eq: light grey on the right})
are nothing else than the right actions of $g\in R\,\bar\otimes\, R^{\text{op}}$ on $1\otimes x$ and on $x\otimes 1$, and so
equation (\ref{eq: light grey on the right}) is just the statement that $f_x$ is a right $(R\,\bar\otimes\, R^{\text{op}})$-module map.
Each $f_x$ is therefore both a left $(R\,\bar\otimes\, R^{\text{op}})$-module 
and a right $(R\,\bar\otimes\, R^{\text{op}})$-module map. 

But $1\otimes x$ and $x\otimes 1$ are irreducible $(R\,\bar\otimes\, R^{\text{op}})$-$(R\,\bar\otimes\, R^{\text{op}})$-bimodules, and $1\otimes x\not\cong x\otimes 1$ unless $x=1$.
The maps $f_x$ can therefore only be nonzero when $x=1$, in which case it must be a scalar.
\end{proof}

%
%

Let us now assume that $\Lambda$ is a coarse bimodule, and that it is given to us as the tensor product of a left $R$-modules with a right $R$-module:
\[
\Lambda={}_RH\otimes_{\mathbb C} K_R\,.
\]
Then we have $\End_{\Bim(R)}(\Lambda)=\End({}_RH)\,\bar\otimes\,\End(K_R)$, and
the subfactor (\ref{eq: irred subfactor}) is of the form
\[
\End({}_RH)\,\bar\otimes\,\End(K_R)\,\subset\,
\End_{\cC'}(\underline\Delta(\Lambda)).
\]

\begin{prop}
\label{prop: RelComm inside End_C'(Delta)}
The algebras $\End({}_RH)$ and $\End(K_R)$ are each other's relative commutants in
$\End_{\cC'}(\underline\Delta(\Lambda))$.
\end{prop}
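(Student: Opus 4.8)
My plan is to reduce the statement to a refined relative-commutant computation for the standard coarse bimodule, and then to recycle the bimodule analysis already carried out in the proof of Theorem~\ref{thm: trivial relative commutant}. Throughout, write $M:=\End_{\cC'}(\underline\Delta(\Lambda))$, $A:=\End({}_RH)$, $B:=\End(K_R)$, and $S:=R\,\bar\otimes\,R^{\mathrm{op}}$, so that an $R$-$R$-bimodule is the same thing as a left $S$-module.

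\emph{Reduction and amplification.} First I would put $H$ and $K$ into standard form, $H\cong{}_RL^2R\otimes\mathcal H$ and $K\cong\mathcal K\otimes L^2R_R$ for separable multiplicity spaces $\mathcal H,\mathcal K$, so that $\Lambda\cong\Lambda_0\otimes_\C\mathcal M$ with $\mathcal M:=\mathcal H\otimes\mathcal K$ and $\Lambda_0:={}_RL^2R\otimes L^2R_R$. Since $\underline\Delta$ commutes with amplification by a Hilbert space (exactly as in the proof of Theorem~\ref{thm: trivial relative commutant}), this yields $M\cong N\,\bar\otimes\,B(\mathcal M)$ with $N:=\End_{\cC'}(\underline\Delta(\Lambda_0))$. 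Setting $N_0:=\End_{\Bim(R)}(\Lambda_0)=\tilde P\,\bar\otimes\,\tilde Q\subset N$, where $\tilde P:=\End({}_RL^2R)\cong R^{\mathrm{op}}$ and $\tilde Q:=\End(L^2R_R)\cong R$, the two subalgebras become $A=\tilde P\,\bar\otimes\,B(\mathcal H)\,\bar\otimes\,\C$ and $B=\tilde Q\,\bar\otimes\,\C\,\bar\otimes\,B(\mathcal K)$ inside $N\,\bar\otimes\,B(\mathcal H)\,\bar\otimes\,B(\mathcal K)$. By the commutation theorem for von Neumann tensor products one then gets $Z_M(A)=Z_N(\tilde P)\,\bar\otimes\,\C\,\bar\otimes\,B(\mathcal K)$ and $Z_M(B)=Z_N(\tilde Q)\,\bar\otimes\,B(\mathcal H)\,\bar\otimes\,\C$, so the Proposition follows once I prove the two refined statements $Z_N(\tilde P)=\tilde Q$ and $Z_N(\tilde Q)=\tilde P$ (which, incidentally, re-proves that $N$ is a factor).

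\emph{The refined computation in $N$.} I would recall from Theorem~\ref{thm: endo} and the identifications in the proof of Theorem~\ref{thm: trivial relative commutant} that, using $\Lambda_0=L^2S$, one has $N\cong\bigoplus_{x\in\Irr(\cC)}\Hom_{\text{left-}S}(1\otimes x,\,x\otimes 1)$, where the components $f_x$ are automatically \emph{left} $S$-linear. The preliminary step is to make the four commuting $R$-actions explicit: after using $x\boxtimes_R L^2R\cong x\cong L^2R\boxtimes_R x$, the domain is $1\otimes x\cong L^2R\otimes x$ with the standard bimodule $(\ell,r)$ on its $L^2R$-tensorand, where the $R$-part of the left $S$-action is $\ell$ and the right $\tilde P$-action is $r$; the codomain is $x\otimes 1\cong x\otimes L^2R$ with the bimodule $(\lambda_x,\rho_x)$ on its $x$-tensorand, where the $R$-part of the left $S$-action is $\lambda_x$ and the right $\tilde P$-action is $\rho_x$. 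The inclusion $\tilde Q\subseteq Z_N(\tilde P)$ is immediate because $\tilde P$ and $\tilde Q$ commute inside $N_0$.

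\emph{Reverse inclusion and conclusion.} For $f=(f_x)\in Z_N(\tilde P)$, each $f_x$ is both left-$S$-linear and $\tilde P$-linear; reading off the actions above, the $R$-part of left-$S$-linearity together with $\tilde P$-linearity say precisely that $f_x$ is left-$S$-linear between the $L^2R$-tensorand of the domain and the $x$-tensorand of the codomain. These are amplifications of the left $S$-modules $L^2R$ and $x$; by full faithfulness $x$ is an irreducible $R$-$R$-bimodule with $x\not\cong L^2R$ for $x\neq 1$, so the two irreducibles are disjoint and $f_x=0$. Thus $f$ is supported at $x=1$, i.e.\ $f\in N_0$, and $Z_{N_0}(\tilde P)=Z(\tilde P)\,\bar\otimes\,\tilde Q=\tilde Q$ since $\tilde P$ is a factor; hence $Z_N(\tilde P)=\tilde Q$. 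The identity $Z_N(\tilde Q)=\tilde P$ follows by the symmetric argument (interchanging the roles of the two $L^2R$ tensorands, i.e.\ of $H$ and $K$), and substituting back gives $Z_M(A)=B$ and $Z_M(B)=A$. I expect the main obstacle to be the bimodule bookkeeping of the previous paragraph: correctly tracking which of the four commuting $R$-actions on $1\otimes x$ and $x\otimes 1$ is supplied by the $R$-part of the left $S$-action and which by $\tilde P$, so as to recognize the relevant constraint as a left-$S$-module intertwiner between (amplifications of) the disjoint irreducibles $L^2R$ and $x$; once this is set up, disjointness of inequivalent irreducibles finishes the argument.
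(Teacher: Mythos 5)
Your core computation is sound and is essentially the paper's own disjointness argument in disguise: once each component $f_a$ is known to be a left $(R\,\bar\otimes\,R^{\op})$-module map $1\otimes a\to a\otimes 1$, commuting with $\End({}_RH)$ supplies exactly the missing right action on the $H$-side, and the resulting intertwiner between (amplifications of) inequivalent irreducible bimodules must vanish for $a\neq 1$. The paper runs this for a \emph{general} coarse $\Lambda={}_RH\otimes_\C K_R$: it shows $f_a$ lands in $\Hom_{R,R'{}^{\op}}(L^2R\boxtimes H,\,a\boxtimes H)\,\bar\otimes\,\Hom_{R^{\op}}(a\boxtimes K,\,K)$ with $R'=\End({}_RH)$, and then uses that $H$ is an \emph{invertible} $R$-$R'{}^{\op}$-bimodule (Lemma~\ref{lem: invertible bimodules}) to identify the first tensor factor with $\Hom_{\Bim(R)}(1,a)$. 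Your argument is the special case $H=L^2R$ of this.

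The genuine gap is your opening reduction: the claim that $H\cong{}_RL^2R\otimes\mathcal H$ and $K\cong\mathcal K\otimes L^2R_R$ for Hilbert multiplicity spaces. Over a general separable factor this fails: if $R$ is of type ${\rm II}_1$ (precisely the case Popa's embedding theorem most directly provides), a left module of non-integer coupling constant such as $L^2(R)p$ with $\tau(p)=\tfrac12$ is not an amplification of $L^2R$, and so $\End({}_RH)$ is not of the form $R^{\op}\,\bar\otimes\,B(\mathcal H)$; the same failure occurs in types ${\rm I}_n$ and ${\rm II}_\infty$. This generality is actually needed: the Proposition is invoked in Lemma~\ref{lem: RL half relative commutant} with $\Lambda_2={}_RL^2R\otimes_\C X_R$ for an \emph{arbitrary} right module $X$, and the Remark following the Proposition insists on arbitrary coarse $\Lambda$. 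One could repair your reduction by realizing $H$ and $K$ as corners $p(L^2R\otimes\ell^2)$ and $(\ell^2\otimes L^2R)q$ and tracking relative commutants under compression, but that bookkeeping is not routine; the cleaner fix is the paper's, namely to keep $H$ general and replace ``disjointness of $L^2R$ and $a$'' by ``invertibility of $H$ as an $R$-$R'{}^{\op}$-bimodule,'' which reduces to your disjointness statement exactly when $H=L^2R$.
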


\begin{proof}
We will only prove that $Z_{\End_{\cC'}(\underline\Delta(\Lambda))}(\End({}_RH))=\End(K_R)$.
The other claim is symmetric and can be proved in a completely analogous way.

Let $b\in\End({}_RH)$ be an endomorphism of $H$, and let $f$ be an element of $\End_{\cC'}(\underline\Delta(\Lambda))$. Let $f_a:\Lambda\boxtimes a\to a \boxtimes \Lambda$ be the maps which correspond to $f\in \End_{\cC'}(\underline\Delta(\Lambda))$ under the bijection established in Theorem~\ref{thm: endo}.
The statement that $b$ and $f$ commute is then equivalent to the statement that for every $a\in \Irr(\cC)$, the following equality holds in
$\Hom(H\otimes_{\mathbb C}K\boxtimes_R a,a\boxtimes_R H\otimes_{\mathbb C}K)$:
$$
\def\vshorten{.45}
\begin{tikzpicture}[baseline=0cm, xscale=-1.3]
    \draw (-.15,-1.6+\vshorten) -- (-.15,1.6-\vshorten);
	\draw (.15,-1.6+\vshorten) -- (.15,1.6-\vshorten);
    \draw (-.6,-1.6+\vshorten) -- (-.6,-.7) .. controls ++(90:.6cm) and ++(225:.4cm) .. (-.15,.2);
    \draw (.15,.4) .. controls ++(45:.3cm) and ++(270:.5cm) .. (.6,1.6-\vshorten);
\roundNbox{unshaded}{(0,.3)}{.3}{0}{0}{$f_a$}
\roundNbox{unshaded}{(.15,-.5)}{.3}{-.12}{-.12}{$b$}
    \node at (-.15,1.8-\vshorten) {\scriptsize{$K$}};
	\node at (.15,1.8-\vshorten) {\scriptsize{$H$}};
	\node at (-.15,-1.8+\vshorten) {\scriptsize{$K$}};
	\node at (.15,-1.8+\vshorten) {\scriptsize{$H$}};
	\node at (-.6,-1.8+\vshorten) {\scriptsize{$a$}};
	\node at (.6,1.8-\vshorten) {\scriptsize{$a$}};
\end{tikzpicture}
\,\,=\,\,
\begin{tikzpicture}[baseline=0cm, yscale=-1, xscale=1.3]
    \draw (-.15,-1.6+\vshorten) -- (-.15,1.6-\vshorten);
	\draw (.15,-1.6+\vshorten) -- (.15,1.6-\vshorten);
    \draw (-.6,-1.6+\vshorten) -- (-.6,-.7) .. controls ++(90:.6cm) and ++(225:.4cm) .. (-.15,.2);
    \draw (.15,.4) .. controls ++(45:.3cm) and ++(270:.5cm) .. (.6,1.6-\vshorten);
\roundNbox{unshaded}{(0,.3)}{.3}{0}{0}{$f_a$}
\roundNbox{unshaded}{(-.15,-.5)}{.3}{-.12}{-.12}{$b$}
    \node at (-.15,1.8-\vshorten) {\scriptsize{$H$}};
	\node at (.15,1.8-\vshorten) {\scriptsize{$K$}};
	\node at (-.15,-1.8+\vshorten) {\scriptsize{$H$}};
	\node at (.15,-1.8+\vshorten) {\scriptsize{$K$}};
	\node at (-.6,-1.77+\vshorten) {\scriptsize{$a$}};
	\node at (.6,1.82-\vshorten) {\scriptsize{$a$}};
\end{tikzpicture}
.
$$
Treating $K$ as a left $R^{\op}$-module
and letting $R'$ be the commutant of $R$ on $H$ (so that $H$ is an $R$-$R'{}^{\op}$-bimodule),
we may `fold' the above diagram (as we did to get
(\ref{eq: light grey on the right})):
\[
\qquad
\begin{tikzpicture}[baseline=-.1cm]
\fill[gray!80] (.2,-1) -- (.2,1) to[rounded corners=10] (-1.05,1) to[rounded corners=10] (-1.05,-1) -- cycle;
\fill[gray!20] (.2,-1) -- (.2,1) to[rounded corners=10] (1,1) to[rounded corners=10] (1,-1) -- cycle;
\draw[very thick] (.2,-1) -- (.2,1);
	\draw (-.6,1) -- (0,.4) -- (-.3,.1) .. controls ++(225:.3cm) and ++(90:.3cm) ..  (-.6,-1);
	\roundNbox{unshaded}{(0,.4)}{.3}{0}{.4}{$f_a$};
	\roundNbox{unshaded}{(0,-.4)}{.3}{0}{.4}{$b\,\raisebox{.2ex}{$\scriptstyle\otimes$}1$};
\node at (-.65,1.2) {\scriptsize{$a\!\otimes\! 1$}};
\node at (-.65,-1.2) {\scriptsize{$1\!\otimes\! a$}};
	\node at (.25,1.2) {\scriptsize{$H\!\otimes\! K$}};
	\node at (.25,-1.2) {\scriptsize{$H\!\otimes\! K$}};
\end{tikzpicture}
\,\,\,\,\,=\,\,\,
\begin{tikzpicture}[baseline=-.1cm, yscale=-1]
\fill[gray!80] (.2,-1) -- (.2,1) to[rounded corners=10] (-1.05,1) to[rounded corners=10] (-1.05,-1) -- cycle;
\fill[gray!20] (.2,-1) -- (.2,1) to[rounded corners=10] (1,1) to[rounded corners=10] (1,-1) -- cycle;
	\draw[very thick] (.2,-1) -- (.2,1);
	\draw (-.6,1) -- (0,.4) -- (-.3,.1) .. controls ++(225:.3cm) and ++(90:.3cm) ..  (-.6,-1);
	\roundNbox{unshaded}{(0,.4)}{.3}{0}{.4}{$f_a$};
	\roundNbox{unshaded}{(0,-.4)}{.3}{0}{.4}{$b\,\raisebox{.2ex}{$\scriptstyle\otimes$} 1$};
\node at (-.65,-1.2) {\scriptsize{$a\!\otimes\! 1$}};
\node at (-.65,1.2) {\scriptsize{$1\!\otimes\! a$}};
	\node at (.25,1.2) {\scriptsize{$H\!\otimes\! K$}};
	\node at (.25,-1.2) {\scriptsize{$H\!\otimes\! K$}};
\end{tikzpicture}\qquad
\begin{tikzpicture}[baseline=-.1cm]
\node[scale=.9]{$\forall\,a\in \Irr(\cC),\,\,\,\forall\,b\in R'$,};
\end{tikzpicture}
\]
where
$\tikz[baseline=.1cm]{\draw[fill=gray!80, rounded corners=5, very thin, baseline=1cm] (0,0) rectangle (.5,.5);}=R\,\bar\otimes\, R^{\text{op}}$
and 
$\tikz[baseline=.1cm]{\draw[fill=gray!20, rounded corners=5, very thin] (0,0) rectangle (.5,.5);}=\mathbb C$.
It follows 
that $f_a$ is not just in
\begin{gather*}
\Hom_{R\,\bar\otimes\, R^{\text{op}}}\big(
(1\otimes a)\boxtimes_{R\,\bar\otimes\, R^{\text{op}}}(H\otimes K),
(a\otimes 1)\boxtimes_{R\,\bar\otimes\, R^{\text{op}}}(H\otimes K)
\big)
\\
=\,\Hom_{R}(L^2R\boxtimes_R H,a\boxtimes_RH)\,\bar\otimes\,
\Hom_{R^{\op}}(a\boxtimes_{R^{\op}} K,L^2R\boxtimes_{R^{\op}}K),
\end{gather*}
but actually in
\[
\Hom_{R,R'{}^{\op}}\big(L^2R\boxtimes_R H,a\boxtimes_RH\big)
\,\bar\otimes\,
\Hom_{R^{\op}}\big(a\boxtimes_{R^{\op}} K,L^2R\boxtimes_{R^{\op}}K\big).
\]
But $H$ is an invertible $R$-$R'{}^{\op}$-bimodule, and so
\[
\Hom_{R,R'{}^{\op}}(L^2R\boxtimes_R H,a\boxtimes_RH)
=\Hom_{\Bim(R)}(1,a).
\]
It follows that $f_a=0$ unless $a=1$, in which case
$f\in \Hom_{R^{\op}}\big(K,K\big)=\End(K_R)$.
\end{proof}

\begin{rem}
Proposition \ref{prop: RelComm inside End_C'(Delta)}
implies Theorems \ref{thm:Factor} and \ref{thm: trivial relative commutant}.
It shows that, among other things, these two theorems hold in the greater generality of $\Lambda$ a coarse bimodule (as opposed to merely absorbing).
\end{rem}

\subsection{Algebras acting on cyclic fusions}

Let $\Lambda_1$ and $\Lambda_2$ be coarse bimodules.
In Section \ref{sec:The endomorphism algebra}, we computed the endomorphism algebra of $\underline\Delta(\Lambda_1)=(\Delta(\Lambda_1),e_{\Lambda_1})\in\cC'$.
Our next task is to compute the commutant of $\End_{\cC'}(\underline\Delta(\Lambda_1))$ on the cyclic fusion
\[
\qquad
\Big[ \Delta(\Lambda_1) \boxtimes \Lambda_2 \boxtimes - \Big]_{\text{cyclic}}
=
\bigoplus_{x\in \Irr(\cC)} 
\Big[ x \boxtimes \Lambda_1 \boxtimes \overline{x} \boxtimes \Lambda_2 \boxtimes - \Big]_{\text{cyclic}}
\]
We first note that there is a commuting action of 
$\End_{\cC'}(\underline\Delta(\Lambda_2))$
on that same Hilbert space:
$$
\def\addheight{.8}
\sum_{y\in\Irr(\cC)}\,\,\,\,
\begin{tikzpicture}[scale=1.1, baseline=1.5cm]
    \coordinate (a) at (1.4,.2);   
    \coordinate (b) at (.2,1.25);   
    \coordinate (c) at (.2,1.6);   
    \coordinate (d) at (1.4,2.62);   
    \coordinate (e) at (.6,.65);     
    \coordinate (f) at (1,2.1);     
	\draw[thick] (.8,2.2+\addheight) ellipse (.8 and .2);
	\draw[thick] (0,0) -- (0,2.2+\addheight);
	\draw[thick] (1.6,0) -- (1.6,2.2+\addheight);
	\halfDottedEllipse{(0,0)}{.8}{.2}
	\draw[thick, dotted] (.2,.13) -- (.2,2.12+\addheight);
	\draw[thick] (.2,2.07+\addheight) -- (.2,2.33+\addheight);
	\draw[very thick] (.6,-.2) -- (.6,2+\addheight);
	\draw[very thick, dotted] (1,.2) -- (1,2+\addheight);
	\draw[very thick] (1,2+\addheight) -- (1,2.4+\addheight);
	\draw[thick] (1.4, -.13) -- (1.4,2.07+\addheight);
	\draw (a) .. controls ++(160:.2cm) and ++(-45:.2cm) .. ($ (e) + (.3,-.3) $) -- ($ (e) + (-.3,.3) $) .. controls ++(135:.1cm) and ++(-20:.1cm) .. ($ (b) + (-.2,-.15) $);
	\draw[thick, dotted] ($ (b) + (-.2,-.15) $) .. controls ++(90:.1cm) and ++(225:.1cm) .. (b);
	\draw[thick, dotted] (c) .. controls ++(20:.2cm) and ++(-135:.2cm) .. ($ (f) + (-.3,-.3) $) -- ($ (f) + (.3,.25) $) .. controls ++(30:.1cm) and ++(-140:.1cm) .. ($ (d) + (.2,-.1) $);
	\draw ($ (d) + (.2,-.1) $) .. controls ++(90:.1cm) and ++(0:.1cm) .. (d);
	\draw[fill=\betacolor] (a) circle (.05cm);
	\draw[fill=\betacolor] (b) circle (.05cm);
	\draw[fill=\alphacolor] (c) circle (.05cm);
	\draw[fill=\alphacolor] (d) circle (.05cm);
	\roundNbox{unshaded, dotted}{(f)}{.3}{0}{0}{$g$}
	\roundNbox{unshaded}{(e)}{.3}{0}{0}{$f$}
	\node at (.2,0) {\scriptsize{$x$}};
	\node at (1.4,-.3) {\scriptsize{$\overline{x}$}};
	\node at (.2,2.5+\addheight) {\scriptsize{$z$}};
	\node at (1.4,2.2+\addheight) {\scriptsize{$\overline{z}$}};
	\node at (.63,-.4) {\scriptsize{$\Lambda_1$}};
	\node at (1.05,2.6+\addheight) {\scriptsize{$\Lambda_2$}};
\node at (.1,1.4) {\scriptsize{$y$}};
\node at (1.5,1.3) {\scriptsize{$\overline{y}$}};
\end{tikzpicture}
\,\,\,\,\,=\,\,
\sum_{y\in\Irr(\cC)}\,\,\,\,
\begin{tikzpicture}[scale=1.1, baseline=1.5cm]
\node at (.1,1.3) {\scriptsize{$y$}};
\node at (1.5,1.45) {\scriptsize{$\overline{y}$}};
    \coordinate (a) at (1.4,.1+1.55);   
    \coordinate (b) at (.2,1.25+1.45);   
    \coordinate (c) at (.2,1.6-1.3);   
    \coordinate (d) at (1.4,2.7-1.4);   
    \coordinate (e) at (.6,.6+1.55);     
    \coordinate (f) at (1,2.15-1.35);     
	\draw[thick] (.8,2.2+\addheight) ellipse (.8 and .2);
	\draw[thick] (0,0) -- (0,2.2+\addheight);
	\draw[thick] (1.6,0) -- (1.6,2.2+\addheight);
	\halfDottedEllipse{(0,0)}{.8}{.2}
	\draw[thick, dotted] (.2,.13) -- (.2,2.12+\addheight);
	\draw[thick] (.2,2.07+\addheight) -- (.2,2.33+\addheight);
	\draw[very thick] (.6,-.2) -- (.6,2+\addheight);
	\draw[very thick, dotted] (1,.2) -- (1,2+\addheight);
	\draw[very thick] (1,2+\addheight) -- (1,2.4+\addheight);
	\draw[thick] (1.4, -.13) -- (1.4,2.07+\addheight);
	\draw (a) .. controls ++(160:.2cm) and ++(-45:.2cm) .. ($ (e) + (.3,-.3) $) -- ($ (e) + (-.3,.3) $) .. controls ++(135:.1cm) and ++(-20:.1cm) .. ($ (b) + (-.2,-.15) $);
	\draw[thick, dotted] ($ (b) + (-.2,-.15) $) .. controls ++(90:.1cm) and ++(225:.1cm) .. (b);
	\draw[thick, dotted] (c) .. controls ++(20:.2cm) and ++(-135:.2cm) .. ($ (f) + (-.3,-.3) $) -- ($ (f) + (.3,.25) $) .. controls ++(30:.1cm) and ++(-140:.1cm) .. ($ (d) + (.2,-.1) $);
	\draw ($ (d) + (.2,-.1) $) .. controls ++(90:.1cm) and ++(0:.1cm) .. (d);
	\draw[fill=\betacolor] (a) circle (.05cm);
	\draw[fill=\betacolor] (b) circle (.05cm);
	\draw[fill=\alphacolor] (c) circle (.05cm);
	\draw[fill=\alphacolor] (d) circle (.05cm);
	\roundNbox{unshaded, dotted}{(f)}{.3}{0}{0}{$g$}
	\roundNbox{unshaded}{(e)}{.3}{0}{0}{$f$}
	\node at (.2,0) {\scriptsize{$x$}};
	\node at (1.4,-.3) {\scriptsize{$\overline{x}$}};
	\node at (.2,2.5+\addheight) {\scriptsize{$z$}};
	\node at (1.4,2.2+\addheight) {\scriptsize{$\overline{z}$}};
	\node at (.63,-.4) {\scriptsize{$\Lambda_1$}};
	\node at (1.05,2.6+\addheight) {\scriptsize{$\Lambda_2$}};
\end{tikzpicture}
\,\,
$$
Here, we have used Theorem \ref{thm: endo} in order to write a generic element of $\End_{\cC'}(\underline\Delta(\Lambda_1))$
as a sum of operators of the form
$
\begin{tikzpicture}[baseline=-.1cm, scale=.6]
\useasboundingbox (-.8,-.9) rectangle (.8,.9);
	\draw[very thick] (0,-.6) -- (0,.6);
	\draw (-.6,.4) -- (.6,-.4);
	\draw (.6,-.6) -- (.6,.6);
	\draw (-.6,-.6) -- (-.6,.6);
	\draw[fill=\betacolor] (-.6,.4) circle (.07cm);
	\draw[fill=\betacolor] (.6,-.4) circle (.07cm);
	\draw[fill=black] (0,0) node[left, xshift=1, yshift=-4]{$\scriptstyle f$} +(.09,.09) rectangle +(-.09,-.09);
\node at (-.6,.8) {{$\scriptscriptstyle y$}};
\node at (.6,.8) {{$\scriptscriptstyle \overline{y}$}};
\node at (-.6,-.83) {{$\scriptscriptstyle x$}};
\node at (.6,-.8) {{$\scriptscriptstyle \overline{x}$}};
\node at (-.27,.42) {{$\scriptscriptstyle a$}};
\node at (.35,0) {{$\scriptscriptstyle a$}};
\node at (.05,.8) {{$\scriptscriptstyle \Lambda_1$}};
\node at (.05,-.8) {{$\scriptscriptstyle \Lambda_1$}};
\end{tikzpicture}$,
and similarly for $\End_{\cC'}(\underline\Delta(\Lambda_2))$.
We have then used the I=H relation to show that the resulting operators commute.
We have also secretly used the existence of a canonical isomorphism
\begin{equation}\label{eq: iso x bar x}
\bigoplus_{x\in\Irr(\cC)} \overline{x}\boxtimes \Lambda_2 \boxtimes x
\,\,\cong
\bigoplus_{x\in\Irr(\cC)} x\boxtimes \Lambda_2 \boxtimes \overline{x}.
\end{equation}
(At first sight, this looks like is might depend on the choice of isomorphisms between each $\overline x$ and the corresponding object of $\Irr(\cC)$. But as each $\overline x$ appears next to an $x$, the isomorphism (\ref{eq: iso x bar x}) is independent of those choices.)

\begin{lem}\label{lem: commutant on cyclic fusion}
Let $\Lambda_1$ and $\Lambda_2$ be coarse bimodules.
Then $N_1=\End_{\Bim(R)}(\Lambda_1)$
and $N_2=\End_{\Bim(R)}(\Lambda_2)$
are each other's commutants on
$\big[ \Lambda_1 \boxtimes_R \Lambda_2 \boxtimes_R - \big]_{\text{\rm cyclic}}$.
\end{lem}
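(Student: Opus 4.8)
The plan is to reduce the cyclic fusion to a spatial tensor product of two ordinary Connes fusions, and then to compute commutants one fusion at a time.

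Since $R$ is a factor and $\Lambda_1,\Lambda_2$ are coarse, Definition~\ref{definition of coarse} lets me write $\Lambda_1={}_RH_1\otimes_{\mathbb C}K_{1,R}$ and $\Lambda_2={}_RH_2\otimes_{\mathbb C}K_{2,R}$, so that $N_1=\End({}_RH_1)\,\bar\otimes\,\End(K_{1,R})$ and $N_2=\End({}_RH_2)\,\bar\otimes\,\End(K_{2,R})$. First I would unwind Definition~\ref{def: cyclic fusion} in the case $n=2$: the cyclic fusion is $\Lambda_1\boxtimes_{R^{\op}\bar\otimes R}\Lambda_2$, and feeding in the coarse factorizations (so that the $R$-leg of $\Lambda_1$ is contracted against the $R$-leg of $\Lambda_2$, and the $R^{\op}$-legs against each other) splits it as
\[
\big[\Lambda_1\boxtimes_R\Lambda_2\boxtimes_R-\big]_{\text{cyclic}}\;\cong\;(K_1\boxtimes_R H_2)\otimes_{\mathbb C}(K_2\boxtimes_R H_1)=:P\otimes Q .
\]
Tracking where the four tensor factors act, $\End(K_{1,R})$ and $\End({}_RH_2)$ act on $P$ (on the $K_1$- and $H_2$-legs), while $\End({}_RH_1)$ and $\End(K_{2,R})$ act on $Q$; thus on $P\otimes Q$ one has $N_1=\End(K_{1,R})^{(P)}\bar\otimes\End({}_RH_1)^{(Q)}$ and $N_2=\End({}_RH_2)^{(P)}\bar\otimes\End(K_{2,R})^{(Q)}$.

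The conceptual core is a statement about a single fusion: on $P=K_1\boxtimes_R H_2$, the algebras $\End(K_{1,R})$ and $\End({}_RH_2)$ are each other's commutants. To prove this, put $M_1=\End(K_{1,R})$. As $R$ is a factor, the nonzero module $K_1$ is faithful, so $M_1$ and $R$ are each other's commutants on $K_1$ (the double commutant of the right $R$-action is that action, its image being a von Neumann algebra since the representation is normal and faithful). By Lemma~\ref{lem: invertible bimodules}, $K_1$ is then an invertible $M_1$--$R$--bimodule, so $K_1\boxtimes_R-\colon\Mod(R)\to\Mod(M_1)$ is an equivalence of categories. Applying it to the endomorphisms of the $R$-module $H_2$ yields a normal $*$-isomorphism $\End({}_RH_2)\xrightarrow{\ \sim\ }\End_{M_1}(K_1\boxtimes_R H_2)$, $\phi\mapsto\id_{K_1}\boxtimes\phi$. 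The right-hand side is exactly the commutant $M_1'$ of $M_1$ in $B(P)$, and the image is the action of $\End({}_RH_2)$ on $P$; hence $M_1'=\End({}_RH_2)$, as wanted. The identical argument on $Q=K_2\boxtimes_R H_1$ shows $\End({}_RH_1)$ and $\End(K_{2,R})$ are mutual commutants there.

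It then remains to combine the two legs using the commutation theorem for tensor products of von Neumann algebras:
\[
N_1'=\big(\End(K_{1,R})^{(P)}\big)'\bar\otimes\big(\End({}_RH_1)^{(Q)}\big)'=\End({}_RH_2)^{(P)}\bar\otimes\End(K_{2,R})^{(Q)}=N_2,
\]
and symmetrically $N_2'=N_1$, which is the assertion. I expect the main obstacle to be not the conceptual core — the invertibility-plus-equivalence argument disposes of it cleanly — but the bookkeeping of the first step: unwinding the cyclic fusion into $P\otimes Q$ while keeping precise track of the left/right $R$- and $R^{\op}$-actions and of exactly which leg each endomorphism algebra occupies, so that the final application of the commutation theorem lines up correctly. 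The folding calculus used in the proof of Proposition~\ref{prop: RelComm inside End_C'(Delta)} is the natural device for making this step rigorous.
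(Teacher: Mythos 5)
Your proof is correct, but it takes a genuinely different route from the paper's. The paper argues in one stroke at the level of the algebra $R^{\op}\,\bar\otimes\,R$: coarseness means the $R\odot R^{\op}$-action on $\Lambda_i$ extends to the spatial tensor product, $N_i$ is its commutant, so by Lemma~\ref{lem: invertible bimodules} $\Lambda_1$ is an invertible $N_1$-$(R^{\op}\,\bar\otimes\,R)$-bimodule and $\Lambda_2$ an invertible $(R^{\op}\,\bar\otimes\,R)$-$N_2$-bimodule; the cyclic fusion $\Lambda_1\boxtimes_{R^{\op}\bar\otimes R}\Lambda_2$ is then an invertible $N_1$-$N_2^{\op}$-bimodule, and invertibility gives the mutual-commutant statement. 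You instead split everything into external tensor factors, reduce to the single-fusion statement that $\End(K_{1,R})$ and $\End({}_RH_2)$ are mutual commutants on $K_1\boxtimes_R H_2$ (proved by the same invertibility mechanism, via Lemma~\ref{lem: invertible bimodules} applied to $K_1$ as an $M_1$-$R$-bimodule), and then reassemble with Tomita's commutation theorem $(A\,\bar\otimes\,B)'=A'\,\bar\otimes\,B'$. What the paper's version buys is brevity and the avoidance of any external input beyond Lemma~\ref{lem: invertible bimodules}; what yours buys is an explicit leg-by-leg identification of where each commutant lives, at the price of invoking the (nontrivial) tensor-product commutation theorem and of the bookkeeping you yourself flag. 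Your identification of the cyclic fusion with $(K_1\boxtimes_R H_2)\otimes_{\mathbb C}(K_2\boxtimes_R H_1)$ and your placement of the four endomorphism algebras on the two legs are both correct, so the argument goes through.
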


\begin{proof}
The algebra $N_1$ is the commutant of $R\,\bar\otimes\,R^{\text{op}}$ on $\Lambda_1$.
By Lemma \ref{lem: invertible bimodules}, the latter is therefore invertible as an $N_1$-$(R^{\text{op}}\,\bar\otimes\,R)$-bimodule.
Similarly, $\Lambda_2$ is invertible as an $(R^{\text{op}}\,\bar\otimes\,R)$-$N_2$-bimodule.
It follows that 
$$
\big[ \Lambda_1 \boxtimes_R \Lambda_2 \boxtimes_R - \big]_{\text{cyclic}}=\,\Lambda_1 \boxtimes_{R^{\text{op}}\,\bar\otimes\,R} \Lambda_2
$$
is an invertible $N_1$-$N_2^{\text{op}}$-bimodule.
\end{proof}

\begin{prop}\label{prop: comm on cyc}
Let $\Lambda_1$ and $\Lambda_2$ be coarse bimodules.
Then
$M_1=\End_{\cC'}(\underline\Delta(\Lambda_1))$
and
$M_2=\End_{\cC'}(\underline\Delta(\Lambda_2))$
are each other's commutants on 
$H=\bigoplus_{x\in\Irr(\cC)}
\big[ x \boxtimes \Lambda_1 \boxtimes \overline{x} \boxtimes \Lambda_2 \boxtimes - \big]_{\text{\rm cyclic}}
$.
\end{prop}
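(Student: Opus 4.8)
The plan is to prove the single equality $M_1'=M_2$ (commutants taken in $B(H)$); since $M_1$ and $M_2$ are von Neumann algebras and we have already exhibited a commuting action, this suffices. Indeed, the computation preceding Lemma~\ref{lem: commutant on cyclic fusion} shows $M_2\subseteq M_1'$ and $M_1\subseteq M_2'$, and once we know $M_1'=M_2$ we get $M_2'=M_1''=M_1$ for free. Before starting I would record that both actions $\pi_i:M_i\to B(H)$ are faithful and normal: faithfulness follows because $R$ is a factor, so each nonzero $\Lambda_i$ is faithful, and fusing with a faithful module is injective on hom-spaces by Lemma~\ref{lem: fusion is faithful}. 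This lets me treat $M_1,M_2$ as concretely realized von Neumann subalgebras of $B(H)$ with $M_i''=M_i$.

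The backbone is a sandwich $M_2\subseteq M_1'\subseteq \tilde M_2$, where $\tilde M_2:=\End_{\Bim(R)}(\Delta(\Lambda_2))$. First I would identify $H$ in two ways: tautologically $H=[\Delta(\Lambda_1)\boxtimes\Lambda_2\boxtimes-]_{\text{cyclic}}$, and, using cyclic invariance together with the isomorphism (\ref{eq: iso x bar x}), also $H\cong[\Lambda_1\boxtimes\Delta(\Lambda_2)\boxtimes-]_{\text{cyclic}}$. Both $\Delta(\Lambda_i)=\bigoplus_{x}x\boxtimes\Lambda_i\boxtimes\overline x$ are coarse, being finite direct sums of bimodules in the coarse ideal generated by the coarse $\Lambda_i$. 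Now $N_1:=\End_{\Bim(R)}(\Lambda_1)$ sits inside $M_1$ as the $a=1$ summand of the identification of Theorem~\ref{thm: endo}, and on $H$ it acts naively on the $\Lambda_1$-strand (the half-braiding is trivial for $a=1$). Applying Lemma~\ref{lem: commutant on cyclic fusion} to the second description of $H$ gives $N_1'=\tilde M_2$ on $H$, whence $M_1'\subseteq N_1'=\tilde M_2$. Together with $M_2\subseteq M_1'$ this pins $M_1'$ between $M_2$ and $\tilde M_2$.

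It remains to show the reverse inclusion $M_1'\subseteq M_2$, i.e.\ that any $T\in\tilde M_2$ which commutes with $M_1$ already lies in $M_2=\End_{\cC'}(\underline\Delta(\Lambda_2))$, the subalgebra of $\tilde M_2$ consisting of endomorphisms that intertwine the half-braiding $e_{\Delta(\Lambda_2)}$. The key geometric point is that, in the cylinder calculus, the nontrivial ($a\neq1$) components of $M_1$ produce a pair of strands labelled $a,\overline a$ that wind once around the cylinder; in doing so they sweep across the $\Delta(\Lambda_2)$ region, and that crossing is implemented by exactly the half-braiding $e_{\Delta(\Lambda_2),a}$. Concretely I would take $g\in M_1$ supported on a single $a\in\Irr(\cC)$, write out the commutation $T\,\pi_1(T_g)=\pi_1(T_g)\,T$ on $H$, and simplify it with the I=H relation---this is the transpose of the commutation computation carried out just before Lemma~\ref{lem: commutant on cyclic fusion}. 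The upshot is that the commutation collapses to the intertwining relations $(\id_a\boxtimes T)\circ e_{\Delta(\Lambda_2),a}=e_{\Delta(\Lambda_2),a}\circ(T\boxtimes\id_a)$ for all $a$, so that $T\in M_2$.

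I expect this last paragraph to be the main obstacle. The diagrammatic extraction must be done carefully enough that varying $g$ over $\bigoplus_{a}\Hom_{\Bim(R)}(\Lambda_1\boxtimes a,a\boxtimes\Lambda_1)$ isolates each half-braiding component of $T$ rather than some weaker averaged condition, and one must check that the naive action of $T$ on the $\Delta(\Lambda_2)$-strand interacts with the wound $a,\overline a$ strands precisely through $e_{\Delta(\Lambda_2),a}$ and nothing else. Everything else---the reduction, the faithfulness, and the two invocations of Lemma~\ref{lem: commutant on cyclic fusion} via the invertible-bimodule Lemma~\ref{lem: invertible bimodules}---is formal once this diagrammatic identity is in hand.
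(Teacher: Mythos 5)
Your architecture coincides with the paper's: exhibit the commuting actions, sandwich the commutant between $M_2$ and $\tilde M_2=\End_{\Bim(R)}(\Delta(\Lambda_2))$ by applying Lemma~\ref{lem: commutant on cyclic fusion} to the coarse pair $\Lambda_1$, $\Delta(\Lambda_2)$ (your remark that $\Delta(\Lambda_2)$ is coarse because coarse bimodules form an ideal is exactly the needed observation), and then argue that commuting with all of $M_1$ forces the half-braiding intertwining. The first two stages are sound. But the decisive last step, which you yourself flag as ``the main obstacle,'' is not closed, and the tool you propose for it is the wrong one. The I=H relation is what proves the \emph{easy} inclusion (that the two actions commute); transposing that computation does not give the converse. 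Varying $g$ over $\bigoplus_a\Hom(\Lambda_1\boxtimes a,a\boxtimes\Lambda_1)$ only tells you that the cyclic contraction of each $g_a$ against the defect $(\id_a\boxtimes T)\circ e_{\Delta(\Lambda_2),a}-e_{\Delta(\Lambda_2),a}\circ(T\boxtimes\id_a)$ vanishes; by itself this is precisely the ``weaker averaged condition'' you worry about, and nothing in your outline upgrades it.

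The missing ingredient is a second, structural use of coarseness: write $\Lambda_1={}_RH\otimes_{\mathbb C}K_R$, so that $\Hom(\Lambda_1\boxtimes a,a\boxtimes\Lambda_1)$ is spanned (weakly) by elementary tensors $g_a=v\otimes u$ with $u:K\boxtimes a\to K$ and $v:H\to a\boxtimes H$. Substituting these, the commutation identity holds for \emph{all} $u$ and $v$, which lets you peel them off; what remains is the intertwining identity tensored with $\id_H$ and $\id_K$, and faithfulness of Connes fusion with the faithful modules $H$ and $K$ (Lemma~\ref{lem: fusion is faithful}) then cancels those factors and yields $(\id_a\boxtimes T)\circ e_{\Delta(\Lambda_2),a}=e_{\Delta(\Lambda_2),a}\circ(T\boxtimes\id_a)$ for each $a$ separately. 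This decomposition-plus-faithfulness argument (or an equivalent) is what the proof actually turns on; without it your proposal stops exactly where the real work begins. (Your geometric picture is essentially correct --- the trivalent vertices by which the wound $a$-strand attaches to the $x,\overline x$ strands are, up to normalization, the half-braiding of $\Delta(\Lambda_2)$ --- but that observation only sets up the equation to be solved; it does not solve it.)
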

\begin{proof}
Let $f$ be in $M_2'$.
Since $f$ commutes with $\End_{\Bim(R)}(\Lambda_2)\subset M_2$,
it follows from Lemma~\ref{lem: commutant on cyclic fusion} that
$f\in \End_{\Bim(R)}(\Delta(\Lambda_1))$.
We therefore have the following situation:

\begin{equation}\label{eq: [g,f]=0 on cylinder}
\qquad\qquad
\sum_{y\in\Irr(\cC)}\,\,\,\,
\begin{tikzpicture}[scale=1.1, baseline=1cm]
    \coordinate (c) at (.2,1.05);   
    \coordinate (d) at (1.4,1.9);   
    \coordinate (f) at (1,1.45);    
	\draw[thick] (.8,2.2) ellipse (.8 and .2);
	\draw[thick] (0,0) -- (0,2.2);
	\draw[thick] (1.6,0) -- (1.6,2.2);
	\halfDottedEllipse{(0,0)}{.8}{.2}
	\draw[thick, densely dotted] (.2,.13) -- (.2,2.12);
	\draw[thick] (.2,2.07) -- (.2,2.33);
	\draw[very thick] (.6,-.2) -- (.6,2);
	\draw[very thick, densely dotted] (1,.2) -- (1,2);
	\draw[very thick] (1,2) -- (1,2.4);
	\draw[thick] (1.4, -.13) -- (1.4,2.07);
	\draw[thick, densely dotted] (c) .. controls ++(10:.2cm) and ++(-145:.2cm) .. ($ (f) + (-.3,-.25) $) -- ($ (f) + (.3,.25) $) .. controls ++(30:.1cm) and ++(-140:.1cm) .. ($ (d) + (.2,-.1) $);
	\draw ($ (d) + (.2,-.1) $) .. controls ++(90:.1cm) and ++(0:.1cm) .. (d);
	\draw[fill=\betacolor] (c) circle (.05cm);
	\draw[fill=\betacolor] (d) circle (.05cm);
	\roundNbox{unshaded, densely dotted}{(f)}{.3}{0}{0}{$g$}
    \draw[very thick, densely dotted, rounded corners=5pt, unshaded] (0,.7).. controls ++(40:.2cm) and ++(220:.1cm) .. (.3,.9) -- (.3,.6);
    \draw[very thick, rounded corners=3pt, unshaded] (0,.3) .. controls ++(-40:.4cm) and ++(220:.3cm) .. (1.51,.3) --  (1.5,.7) .. controls ++(220:.3cm) and ++(-40:.4cm) .. (0,.7);
	\node at (.6,.32) {\scriptsize{$f$}};
	\node at (.2,0) {\scriptsize{$x$}};
	\node at (1.4,-.3) {\scriptsize{$\overline{x}$}};
	\node at (.2,2.5) {\scriptsize{$z$}};
	\node at (1.4,2.2) {\scriptsize{$\overline{z}$}};
	\node at (.63,-.4) {\scriptsize{$\Lambda_1$}};
	\node at (1.05,2.6) {\scriptsize{$\Lambda_2$}};
\node at (1.5,1.1) {\scriptsize{$\overline{y}$}};
\node at (.1,.89) {\scriptsize{$y$}};
\end{tikzpicture}
\,\,\,\,\,=\,\,
\sum_{y\in\Irr(\cC)}\,\,\,\,
\begin{tikzpicture}[scale=1.1, baseline=1cm]
\node at (1.5,1.3) {\scriptsize{$\overline{y}$}};
\node at (.1,1) {\scriptsize{$y$}};
    \coordinate (c) at (.2,1.05-.77); 
    \coordinate (d) at (1.4,1.9-.77); 
    \coordinate (f) at (1,1.45-.77);  
	\draw[thick] (.8,2.2) ellipse (.8 and .2);
	\draw[thick] (0,0) -- (0,2.2);
	\draw[thick] (1.6,0) -- (1.6,2.2);
	\halfDottedEllipse{(0,0)}{.8}{.2}
	\draw[thick, densely dotted] (.2,.13) -- (.2,2.12);
	\draw[thick] (.2,2.07) -- (.2,2.33);
	\draw[very thick] (.6,-.2) -- (.6,2);
	\draw[very thick, densely dotted] (1,.2) -- (1,2);
	\draw[very thick] (1,2) -- (1,2.4);
	\draw[thick] (1.4, -.13) -- (1.4,2.07);
	\draw[thick, densely dotted] (c) .. controls ++(10:.2cm) and ++(-145:.2cm) .. ($ (f) + (-.3,-.25) $) -- ($ (f) + (.3,.25) $) .. controls ++(30:.1cm) and ++(-140:.1cm) .. ($ (d) + (.2,-.1) $);
	\draw ($ (d) + (.2,-.1) $) .. controls ++(90:.1cm) and ++(0:.1cm) .. (d);
	\draw[fill=\betacolor] (c) circle (.05cm);
	\draw[fill=\betacolor] (d) circle (.05cm);
	\roundNbox{unshaded, densely dotted}{(f)}{.3}{0}{0}{$g$}
    \draw[very thick, densely dotted, rounded corners=5pt, unshaded] (0,1.8).. controls ++(40:.2cm) and ++(220:.1cm) .. (.3,2) -- (.3,1.7);
    \draw[very thick, rounded corners=3pt, unshaded] (0,1.4) .. controls ++(-40:.4cm) and ++(220:.3cm) .. (1.51,1.4) --  (1.5,1.8) .. controls ++(220:.3cm) and ++(-40:.4cm) .. (0,1.8);
	\node at (.6,1.42) {\scriptsize{$f$}};
	\node at (.2,0) {\scriptsize{$x$}};
	\node at (1.4,-.3) {\scriptsize{$\overline{x}$}};
	\node at (.2,2.5) {\scriptsize{$z$}};
	\node at (1.4,2.2) {\scriptsize{$\overline{z}$}};
	\node at (.63,-.4) {\scriptsize{$\Lambda_1$}};
	\node at (1.05,2.6) {\scriptsize{$\Lambda_2$}};
\node[scale=.9] at (3.2,1.1) {$\forall g\!:\!\Lambda_2\boxtimes a$};
\node[scale=.9] at (3.75,0.7) {$\to a\boxtimes \Lambda_2.$};
\end{tikzpicture}
\end{equation}
It remains to show that $f$ commutes with the half-braiding. 
Write $\Lambda_2$ as ${}_R(H_2)\otimes_{\mathbb C} (H_1)_R$, for some right/left $R$-modules $H_1$ and $H_2$.
We then have a canonical isomorphism
\[
\big[ x \boxtimes \Lambda_1 \boxtimes \overline{x} \boxtimes \Lambda_2 \boxtimes - \big]_{\text{cyclic}}
=\,
H_1\boxtimes x \boxtimes \Lambda_1 \boxtimes \overline{x} \boxtimes H_2.
\]
Taking $g$ of the form
\[
\Lambda_2\boxtimes a=H_2\otimes_{\mathbb C} H_1\boxtimes a\stackrel{v\otimes u}{\relbar\joinrel\hspace{-.07mm}\relbar\joinrel\rightarrow} a\boxtimes H_2\otimes_{\mathbb C} H_1
=a\boxtimes \Lambda_2\]
for $R$-module maps $v:H_2\to a\boxtimes H_2$ and $u:H_1\boxtimes a\to H_1$, equation (\ref{eq: [g,f]=0 on cylinder}) becomes:
$$
\sum_{y\in\Irr(\cC)}\,\,
\begin{tikzpicture}[baseline=-.1cm, yscale=-1]
	\draw[very thick] (0,-1.6) -- (0,1.6);
	\draw (-.8,-1.6) -- (-.8,1.6);
	\draw (.8,-1.6) -- (.8,1.6);
	\draw (-.15,-1.6) -- (-.15,1.6);
	\draw (.15,-1.6) -- (.15,1.6);
    \draw (-.65,-.7) .. controls ++(90:.2cm) and ++(225:.2cm) .. (-.15,-.4);
    \draw (.15,-.2) .. controls ++(45:.6cm) and ++(270:.4cm) .. (.65,.7);
	\roundNbox{unshaded}{(0,.5)}{.3}{0}{0}{$f$}
	\roundNbox{unshaded}{(-.8,-1)}{.3}{0}{0}{$u$}
	\roundNbox{unshaded}{(.8,1)}{.3}{0}{0}{$v$}
	\node at (.05-.02,1.8) {\scriptsize{$\Lambda_1$}};
	\node at (.05-.02,-1.8) {\scriptsize{$\Lambda_1$}};
	\draw[fill=\betacolor] (-.15,-.4) circle (.05cm);
	\draw[fill=\betacolor] (.15,-.2) circle (.05cm);
	\node at (-.25-.02,1.82) {\scriptsize{$x$}};
	\node at (.3-.02,1.8) {\scriptsize{$\overline{x}$}};
	\node at (-.25-.02,-1.77) {\scriptsize{$z$}};
	\node at (.3-.02,-1.8) {\scriptsize{$\overline{z}$}};
	\node at (-.8,-1.8) {\scriptsize{$H_1$}};
	\node at (.8,-1.8) {\scriptsize{$H_2$}};
	\node at (-.8,1.8) {\scriptsize{$H_1$}};
	\node at (.8,1.8) {\scriptsize{$H_2$}};
	\node at (-.5,-.4) {\scriptsize{$a$}};
	\node at (.5,-.1) {\scriptsize{$a$}};
\node at (-.27,.05) {\scriptsize{$y$}};
\node at (.27,.05) {\scriptsize{$\overline{y}$}};
\end{tikzpicture}
\,\,\,=\,
\sum_{y\in\Irr(\cC)}\,\,
\begin{tikzpicture}[baseline=-.1cm, yscale=-1]
\node at (-.25,-.01) {\scriptsize{$y$}};
\node at (.27,0) {\scriptsize{$\overline{y}$}};
	\draw[very thick] (0,-1.6) -- (0,1.6);
	\draw (-.8,-1.6) -- (-.8,1.6);
	\draw (.8,-1.6) -- (.8,1.6);
	\draw (-.15,-1.6) -- (-.15,1.6);
	\draw (.15,-1.6) -- (.15,1.6);
    \draw (-.65,-.7) .. controls ++(90:.6cm) and ++(225:.4cm) .. (-.15,.2);
    \draw (.15,.4) .. controls ++(45:.2cm) and ++(270:.2cm) .. (.65,.7);
	\roundNbox{unshaded}{(0,-.5)}{.3}{0}{0}{$f$}
	\roundNbox{unshaded}{(-.8,-1)}{.3}{0}{0}{$u$}
	\roundNbox{unshaded}{(.8,1)}{.3}{0}{0}{$v$}
	\node at (.05-.02,1.8) {\scriptsize{$\Lambda_1$}};
	\node at (.05-.02,-1.8) {\scriptsize{$\Lambda_1$}};
	\draw[fill=\betacolor] (-.15,.2) circle (.05cm);
	\draw[fill=\betacolor] (.15,.4) circle (.05cm);
	\node at (-.5,.1) {\scriptsize{$a$}};
	\node at (.5,.4) {\scriptsize{$a$}};
	\node at (-.25-.02,1.82) {\scriptsize{$x$}};
	\node at (.3-.02,1.8) {\scriptsize{$\overline{x}$}};
	\node at (-.25-.02,-1.77) {\scriptsize{$z$}};
	\node at (.3-.02,-1.8) {\scriptsize{$\overline{z}$}};
	\node at (-.8,-1.8) {\scriptsize{$H_1$}};
	\node at (.8,-1.8) {\scriptsize{$H_2$}};
	\node at (-.8,1.8) {\scriptsize{$H_1$}};
	\node at (.8,1.8) {\scriptsize{$H_2$}};
\end{tikzpicture}
.
$$
This being true for any $u$ and $v$, it follows that
$$
\def\vshorten{.4}
\sum_{y\in\Irr(\cC)}\,\,
\begin{tikzpicture}[baseline=-.1cm, yscale=-1]
	\draw[very thick] (0,-1.6+\vshorten) -- (0,1.6-\vshorten);
	\draw (-.8,-1.6+\vshorten) -- (-.8,1.6-\vshorten);
	\draw (.8,-1.6+\vshorten) -- (.8,1.6-\vshorten);
	\draw (-.15,-1.6+\vshorten) -- (-.15,1.6-\vshorten);
	\draw (.15,-1.6+\vshorten) -- (.15,1.6-\vshorten);
    \draw (.65,1.6-\vshorten) -- (.65,.7) .. controls ++(90:-.6cm) and ++(225:-.4cm) .. (.15,-.2);
    \draw (-.15,-.4) .. controls ++(45:-.4cm) and ++(270:-.6cm) .. (-.65,-1.6+\vshorten);
	\roundNbox{unshaded}{(0,.5)}{.3}{0}{0}{$f$}
	\node at (.05-.02,1.8-\vshorten) {\scriptsize{$\Lambda_1$}};
	\node at (.05-.02,-1.8+\vshorten) {\scriptsize{$\Lambda_1$}};
	\draw[fill=\betacolor] (-.15,-.4) circle (.05cm);
	\draw[fill=\betacolor] (.15,-.2) circle (.05cm);
	\node at (-.25-.02,1.82-\vshorten) {\scriptsize{$x$}};
	\node at (.3-.02,1.8-\vshorten) {\scriptsize{$\overline{x}$}};
	\node at (-.25-.02,-1.77+\vshorten) {\scriptsize{$z$}};
	\node at (.3-.02,-1.8+\vshorten) {\scriptsize{$\overline{z}$}};
	\node at (-.87,-1.8+\vshorten) {\scriptsize{$H_1$}};
	\node at (-.58,-1.77+\vshorten) {\scriptsize{$a$}};
	\node at (.8,-1.8+\vshorten) {\scriptsize{$H_2$}};
	\node at (-.8,1.8-\vshorten) {\scriptsize{$H_1$}};
	\node at (.93,1.8-\vshorten) {\scriptsize{$H_2$}};
	\node at (.6,1.82-\vshorten) {\scriptsize{$a$}};
\node at (-.27,.05) {\scriptsize{$y$}};
\node at (.27,.05) {\scriptsize{$\overline{y}$}};
\end{tikzpicture}
\,=\,
\sum_{y\in\Irr(\cC)}\,\,
\begin{tikzpicture}[baseline=-.1cm, yscale=-1]
\node at (-.25,-.01) {\scriptsize{$y$}};
\node at (.27,0) {\scriptsize{$\overline{y}$}};
	\draw[very thick] (0,-1.6+\vshorten) -- (0,1.6-\vshorten);
	\draw (-.8,-1.6+\vshorten) -- (-.8,1.6-\vshorten);
	\draw (.8,-1.6+\vshorten) -- (.8,1.6-\vshorten);
	\draw (-.15,-1.6+\vshorten) -- (-.15,1.6-\vshorten);
	\draw (.15,-1.6+\vshorten) -- (.15,1.6-\vshorten);
    \draw (-.65,-1.6+\vshorten) -- (-.65,-.7) .. controls ++(90:.6cm) and ++(225:.4cm) .. (-.15,.2);
    \draw (.15,.4) .. controls ++(45:.4cm) and ++(270:.6cm) .. (.65,1.6-\vshorten);
	\roundNbox{unshaded}{(0,-.5)}{.3}{0}{0}{$f$}
	\node at (.05-.02,1.8-\vshorten) {\scriptsize{$\Lambda_1$}};
	\node at (.05-.02,-1.8+\vshorten) {\scriptsize{$\Lambda_1$}};
	\draw[fill=\betacolor] (-.15,.2) circle (.05cm);
	\draw[fill=\betacolor] (.15,.4) circle (.05cm);
	\node at (-.25-.02,1.82-\vshorten) {\scriptsize{$x$}};
	\node at (.3-.02,1.8-\vshorten) {\scriptsize{$\overline{x}$}};
	\node at (-.25-.02,-1.77+\vshorten) {\scriptsize{$z$}};
	\node at (.3-.02,-1.8+\vshorten) {\scriptsize{$\overline{z}$}};
	\node at (-.87,-1.8+\vshorten) {\scriptsize{$H_1$}};
	\node at (-.58,-1.77+\vshorten) {\scriptsize{$a$}};
	\node at (.8,-1.8+\vshorten) {\scriptsize{$H_2$}};
	\node at (-.8,1.8-\vshorten) {\scriptsize{$H_1$}};
	\node at (.93,1.8-\vshorten) {\scriptsize{$H_2$}};
	\node at (.6,1.82-\vshorten) {\scriptsize{$a$}};
\end{tikzpicture}
.
$$
Finally, fusing with $H_1$ and $H_2$ are faithful operations by Lemma \ref{lem: fusion is faithful}, and so the above equation implies
$e_{\Delta(\Lambda_1),a}\circ(f\boxtimes \id_a)=(\id_a\boxtimes f)\circ e_{\Delta(\Lambda_1),a}$, as desired.
\end{proof}

\section{Proof of the main theorem}

Let $\cC$ be a unitary fusion category, and let $\alpha:\cC\to\Bim(R)$ be a fully faithful representation. 
Then $\alpha$ extends to a functor
\begin{align*}
\alpha^{\Hilb}\,:\,\,\cC\otimes_\Vec \Hilb \,&\,\to\,\,\,\Bim(R)
\\
\textstyle
\bigoplus x_i\otimes H_i\,&\mapsto\,
\textstyle
\bigoplus \alpha(x_i)\otimes H_i.
\end{align*}
Here, the first `$-\otimes H_i$' is formal (as defined in Section \ref{sec: Unitary fusion categories}), whereas the second one is evaluated in $\Bim(R)$.

\begin{lem}\label{lem: (C x Hilb)' = C'}
The restriction functor $(\cC\otimes_\Vec \Hilb)'\to \cC'$ is an equivalence.
\end{lem}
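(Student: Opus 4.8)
The plan is to show that the restriction functor, which I will call $\mathrm{Res}\colon(\cC\otimes_\Vec\Hilb)'\to\cC'$, is fully faithful and essentially surjective. Note that $\mathrm{Res}$ is manifestly a bi-involutive tensor functor, since the underlying object, the adjoint, the conjugate, and all coherence data are inherited unchanged from $\Bim(R)$; so once the underlying functor is shown to be an equivalence, we are done. The entire argument rests on a single principle: \emph{half-braidings against $\cC\otimes_\Vec\Hilb$ only see the $\cC$-directions}. Concretely, for $(X,e_X)\in(\cC\otimes_\Vec\Hilb)'$ the half-braiding against a generic object $x\otimes H$ is forced to equal $e_{X,x}\otimes\id_H$.

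To establish this principle, I would first record the canonical unitary $X\boxtimes_R\alpha^{\Hilb}(x\otimes H)=X\boxtimes_R(\alpha(x)\otimes H)\cong(X\boxtimes_R\alpha(x))\otimes H$, which holds because $H$ enters $\alpha(x)\otimes H$ as an inert multiplicity space on which $R$ does not act, so Connes fusion passes through it. Then, fixing an orthonormal basis $\{\xi_n\}$ of $H$ and viewing each $\xi_n$ as a morphism $\C\to H$, one gets morphisms $\id_x\otimes\xi_n\colon x\to x\otimes H$ in $\cC\otimes_\Vec\Hilb$, with $\alpha^{\Hilb}(\id_x\otimes\xi_n)=\id_{\alpha(x)}\otimes\xi_n$. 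Naturality of $e_X$ against $\id_x\otimes\xi_n$ shows that $e_{X,x\otimes H}$ acts as $e_{X,x}\otimes\id$ on the summand $(X\boxtimes\alpha(x))\otimes\C\xi_n$; since these summands exhaust $(X\boxtimes\alpha(x))\otimes H$, we conclude $e_{X,x\otimes H}=e_{X,x}\otimes\id_H$, and by additivity $e_{X,\bigoplus_j y_j\otimes K_j}=\bigoplus_j e_{X,y_j}\otimes\id_{K_j}$.

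Granting this, the three requirements follow cleanly. Faithfulness is immediate, as $\mathrm{Res}$ is the identity on underlying morphisms $f\colon X\to Y$ in $\Bim(R)$. For fullness, suppose $f$ intertwines the restricted half-braidings, i.e. commutes with $e_{-,x}$ for every $x\in\cC$; tensoring with $\id_H$ shows $f$ commutes with every $e_{-,x\otimes H}=e_{-,x}\otimes\id_H$, hence with every half-braiding of $\cC\otimes_\Vec\Hilb$, so $f$ is a morphism upstairs. For essential surjectivity, I would start from $(X,e_X)\in\cC'$ and \emph{define} $\tilde e_{X,\bigoplus_j y_j\otimes K_j}:=\bigoplus_j e_{X,y_j}\otimes\id_{K_j}$. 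Each component is unitary because $e_{X,y_j}$ is, so $(X,\tilde e_X)$ lands in the unitary commutant; naturality and the hexagon axiom reduce, using $(x\otimes H)\otimes(y\otimes K)=(x\otimes y)\otimes(H\otimes K)$ and $\alpha^{\Hilb}(f\otimes g)=\alpha(f)\otimes g$, to the corresponding properties of $e_X$ over $\cC$ tensored with identity maps of Hilbert spaces. Then $\mathrm{Res}(X,\tilde e_X)=(X,e_X)$.

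The main thing to get right is the identification $X\boxtimes_R(\alpha(x)\otimes H)\cong(X\boxtimes\alpha(x))\otimes H$, together with the bookkeeping that upgrades the naturality computation to the on-the-nose equality $e_{X,x\otimes H}=e_{X,x}\otimes\id_H$ (independent of the chosen basis of $H$). Once this principle is in place, all three parts of the equivalence are essentially formal; I expect no genuine analytic obstacle, since the Hilbert-space factors behave as pure multiplicities throughout.
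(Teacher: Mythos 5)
Your proposal is correct and follows essentially the same route as the paper: extend a half-braiding over $\cC$ to $\cC\otimes_\Vec\Hilb$ by $\bigoplus_j e_{X,y_j}\otimes\id_{K_j}$, observe that naturality (against the morphisms $\C\to H_j$ built from vectors of the multiplicity spaces) forces any half-braiding over $\cC\otimes_\Vec\Hilb$ to have this form, and note that the morphism conditions upstairs and downstairs coincide for the same reason. The paper compresses the ``forced by naturality'' step into one sentence; your basis-vector argument is exactly the intended justification.
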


\begin{proof}
Given an object $(X,e_X)\in\cC'$, we can extend the half-braiding $e_X=(e_{X,y}:X\otimes y\to y\otimes X)_{y\in \cC}$ 
to arbitrary objects $Y=\bigoplus y_i\otimes H_i$ of $\cC\otimes_\Vec \Hilb$ by
\begin{align*}
e_{X,Y}:X\boxtimes Y = X\boxtimes \Big({\textstyle \bigoplus}\, y_i\otimes H_i\Big)=&\,\bigoplus\, (X\boxtimes y_i)\otimes H_i
\\
\xrightarrow{\bigoplus e_{X,y_i}\otimes \id_{H_i}}
&\,\bigoplus\, (y_i\boxtimes X)\otimes H_i
= \Big({\textstyle \bigoplus}\, y_i\otimes H_i\Big) \boxtimes X
= Y\boxtimes X.
\end{align*}
The half-braiding $e_{X,Y}$ is completely determined from the $e_{X,y_i}$ by naturality, and so
the functor $(\cC\otimes_\Vec \Hilb)'\to \cC'$ is a bijection on objects.
To finish the argument, we note that again by naturality, given two objects $(X_1,e_{X_1})$ and $(X_2,e_{X_2})$ in $\cC'$,
a map $f:X_1\to X_2$ is a morphism $(X_1,e_{X_1})\to (X_2,e_{X_2})$ in $\cC'$ if and only if it is a morphism between the corresponding objects of $(\cC\otimes_\Vec \Hilb)'$.
\end{proof}

\begin{thm*}[Theorem \ref{thm:Main}]
Let $\cC$ be a unitary fusion category and let $\alpha:\cC\to\Bim(R)$ be a fully faithful representation.
Then $\alpha^{\Hilb}$ exhibits $\cC\otimes_\Vec \Hilb$ as a bicommutant category.
\end{thm*}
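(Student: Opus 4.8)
The plan is to verify that the inclusion functor $\iota\colon\cC\otimes_\Vec\Hilb\to(\cC\otimes_\Vec\Hilb)''$ is an equivalence, by treating full faithfulness and essential surjectivity separately. First I would reduce to a statement about $\cC''$. By Lemma~\ref{lem: (C x Hilb)' = C'} the restriction functor $(\cC\otimes_\Vec\Hilb)'\to\cC'$ is an equivalence, and this equivalence visibly commutes with the forgetful functors to $\Bim(R)$. Since the unitary commutant $\cZ^*_{\Bim(R)}(-)$ depends only on a category together with its bi-involutive tensor functor to $\Bim(R)$, equivalent such functors have equivalent commutants, so taking commutants once more identifies $(\cC\otimes_\Vec\Hilb)''$ with $\cC''$. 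It therefore suffices to prove that the induced functor $G\colon\cC\otimes_\Vec\Hilb\to\cC''$ is an equivalence.

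Full faithfulness is the easy half. For $Z_1,Z_2\in\cC\otimes_\Vec\Hilb$, a morphism $\iota Z_1\to\iota Z_2$ in $\cC''$ is by definition a morphism $\alpha^{\Hilb}(Z_1)\to\alpha^{\Hilb}(Z_2)$ in $\Bim(R)$ intertwining the half-braidings, so that $\Hom_{\cC''}(\iota Z_1,\iota Z_2)\subseteq\Hom_{\Bim(R)}(\alpha^{\Hilb}Z_1,\alpha^{\Hilb}Z_2)$. Because $\alpha$ is fully faithful, $\alpha^{\Hilb}$ induces a bijection $\Hom_{\cC\otimes_\Vec\Hilb}(Z_1,Z_2)\cong\Hom_{\Bim(R)}(\alpha^{\Hilb}Z_1,\alpha^{\Hilb}Z_2)$; as this bijection factors through $\Hom_{\cC''}(\iota Z_1,\iota Z_2)$, the map $\Hom_{\cC\otimes_\Vec\Hilb}\to\Hom_{\cC''}$ is itself a bijection.

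The substance of the proof is essential surjectivity. Let $\Xi=\bigoplus_{x\in\Irr(\cC)}x\otimes\ell^2(\N)$ be the absorbing object of $\cC\otimes_\Vec\Hilb$, with underlying bimodule $\Theta=\bigoplus_x\alpha(x)\otimes\ell^2(\N)$. The plan is to show that $G\Xi$ is absorbing in $\cC''$. Granting this, since $1\hookrightarrow\Xi$ in $\cC\otimes_\Vec\Hilb$ and $G$ is exact we have $\iota(1)\hookrightarrow G\Xi$, so every object $(X,e_X)$ of $\cC''$ embeds in $G\Xi$ via $(X,e_X)\cong(X,e_X)\otimes\iota(1)\hookrightarrow(X,e_X)\otimes G\Xi\cong G\Xi$. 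Such an embedding is a projection $p$ in $\End_{\cC''}(G\Xi)$, which full faithfulness identifies with $\End_{\cC\otimes_\Vec\Hilb}(\Xi)$; splitting the corresponding projection in the idempotent-complete category $\cC\otimes_\Vec\Hilb$ produces an object $Z$ with $G(Z)\cong(X,e_X)$.

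It remains to establish that $G\Xi$ is absorbing, and this is the step I expect to be the main obstacle. Concretely it amounts to showing that the underlying bimodule of an arbitrary $(X,e_X)\in\cC''$ is supported on $\cC$, i.e. decomposes as $\bigoplus_x\alpha(x)\otimes H_x$; for then $X\boxtimes\Theta\cong\Theta$ and the half-braiding on the product is pinned down. The difficulty is that this support statement cannot be read off at the level of underlying bimodules: taking the half-braiding of $(X,e_X)$ against the absorbing object $\underline\Delta(\Omega)\in\cC'$ gives an isomorphism $X\boxtimes\Delta(\Omega)\cong\Delta(\Omega)\boxtimes X$ whose mere existence is vacuous, since both sides absorb to $\bigoplus_x\Omega\cong\Omega$. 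The constraint must instead be extracted from the coherence and naturality of the half-braiding. The tool for this is the factor $M=\End_{\cC'}(\underline\Delta(\Omega))$ of Theorem~\ref{thm:Factor}, together with the fact that $M$ is its own commutant on the cyclic fusion $\bigoplus_x[x\boxtimes\Omega\boxtimes\overline x\boxtimes\Omega\boxtimes-]_{\text{cyclic}}$ of Proposition~\ref{prop: comm on cyc}: by naturality $e_{X,\underline\Delta(\Omega)}$ intertwines the $M$-actions, so it converts the half-braiding data of $X$ into a morphism of $M$-modules, and the commutant computation of Proposition~\ref{prop: comm on cyc}, combined with Proposition~\ref{prop:DeltaDeterminesHalfBraidings} to reduce everything to the single object $\underline\Delta(\Omega)$, forces $X$ to be a $\cC$-supported bimodule carrying its canonical half-braiding.
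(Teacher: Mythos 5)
Your skeleton---reduce to $\cC''$ via Lemma \ref{lem: (C x Hilb)' = C'}, check full faithfulness, then handle essential surjectivity by showing the image of the absorbing object $\Xi$ is absorbing in $\cC''$ and splitting projections in $\End_{\cC''}(G\Xi)\cong\End_{\cC\otimes_\Vec \Hilb}(\Xi)$---is sound, and the first two parts are fine (the paper proves full faithfulness by noting that simple objects stay simple in $\cC''$ and using finite-dimensionality of the relevant Hom spaces; your detour through $\Bim(R)$ amounts to the same thing). The projection-splitting endgame is a legitimate repackaging of the paper's essential-surjectivity step.

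The gap is in the step you yourself flag as the main obstacle, and it is genuine: to show $G\Xi$ is absorbing in $\cC''$ you need two separate facts, and your sketch supplies neither. First, that the underlying bimodule of any $(X,e_X)\in\cC''$ decomposes as $\bigoplus_x \alpha(x)\otimes H_x$. You assert this is ``forced'' by Proposition \ref{prop: comm on cyc} applied to $e_{X,\underline\Delta(\Omega)}$, but the commutant computation on the cyclic fusion does not by itself produce such a decomposition. The paper's mechanism (Proposition \ref{prop:AllBimodulesInCBoxtimesHilb}) is different: one takes the half-braiding against $\underline\Delta(\Lambda_0)$ for the \emph{coarse} bimodule $\Lambda_0=L^2R\otimes L^2R$ (not the absorbing one), uses naturality with respect to the subalgebra $R^{\op}\,\bar\otimes\,R=\End_{\Bim(R)}(\Lambda_0)\subset\End_{\cC'}(\underline\Delta(\Lambda_0))$ to upgrade $e_{X,\underline\Delta(\Lambda_0)}$ to an isomorphism of quadri-modules $\bigoplus_y (X\boxtimes y)\otimes\overline y\cong\bigoplus_y y\otimes(\overline y\boxtimes X)$, and then applies $\Hom_{R_3,R_4}(L^2R,-)$ to extract $X\cong\bigoplus_y y\otimes\Hom(L^2R,\overline y\boxtimes X)$. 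That idea---which is where full faithfulness of $\alpha$ and coarseness actually get used---is absent from your proposal. Second, even granting the decomposition of $X$ and the bimodule isomorphism $X\boxtimes\Theta\cong\Theta$, concluding $(X,e_X)\otimes G\Xi\cong G\Xi$ \emph{in} $\cC''$ requires knowing that a given bimodule admits at most one half-braiding (Proposition \ref{prop:UniqueHalfBraiding}, whose proof needs Lemma \ref{lem: RL half relative commutant} and hence the cyclic-fusion commutant computation, on top of Proposition \ref{prop:DeltaDeterminesHalfBraidings}). ``The half-braiding on the product is pinned down'' elides this: Proposition \ref{prop:DeltaDeterminesHalfBraidings} only says a half-braiding is determined by its value on an absorbing object of $\cC'$; it does not say that two half-braidings on the same bimodule must agree there.
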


\begin{proof}
We will show that $\cC''$ is equivalent to $\cC\otimes_\Vec \Hilb$.
The result will then follow since $\cC''=(\cC\otimes_\Vec \Hilb)''$ by Lemma~\ref{lem: (C x Hilb)' = C'}.
We first note that the `inclusion' functor
$\iota:\cC\to \cC''$ (described in Section \ref{sec: Bicommutant categories})
extends to a functor
\begin{align*}
\iota^{\Hilb}\,:\,\,\cC\otimes_\Vec \Hilb \,&\,\to\,\,\,\cC''
\\
\textstyle
\bigoplus x_i\otimes H_i\,&\mapsto\,
\textstyle
\bigoplus \iota(x_i)\otimes H_i
\end{align*}
where the first `$-\otimes H_i$' is formal, and the second is evaluated in $\cC''$.
\medskip

\noindent{\it $\bullet$ The functor $\iota^{\Hilb}$ is fully faithful:}\medskip\\\indent
The functor is fully faithful on simple objects, since their images remain simple in $\cC''$. 
Indeed, they remain simple in $\Bim(R)$, and therefore also in $\cC''$.
For finite sums of simple objects, fully faithfulness follows by additivity.
For the remaining objects, we have
\begin{align*}
\Hom_{\cC''} &\Big({\textstyle \bigoplus_i}\, \iota(x_i)\otimes H_i,
{\textstyle \bigoplus_j}\, \iota(y_j)\otimes K_j\Big)=
\bigoplus_{ij} \Hom_{\cC''}\!\big(\iota(x_i),\iota(y_j)\big)\otimes_\C \Hilb(H_i,K_j)
\\
&=
\bigoplus_{ij} \Hom_{\cC}(x_i,y_j)\otimes_\C \Hilb(H_i,K_j)
=\Hom_{\,\cC\otimes_\Vec \Hilb}
\Big({\textstyle \bigoplus_i}\, x_i\otimes H_i,
{\textstyle \bigoplus_j}\, y_j\otimes K_j\Big),
\vspace{-.2cm}
\end{align*}
where we have used the finite dimensionality of $\Hom_{\cC''}(\iota(x_i),\iota(y_j))$
in the first equality.
\bigskip

\noindent{\it $\bullet$ The functor $\iota^{\Hilb}$ is essentially surjective:}\medskip\\\indent
Let $\underline\Omega\in\cC'$ be an absorbing object.
The proof splits into three steps:
\begin{enumerate}[(1)]
\item
If $(X,e_X)$ is an object of $\cC''$, then
its underlying bimodule $X$ lies in $\cC\otimes_\Vec \Hilb$ (the essential image of $\iota^{\Hilb}$).
\item
Let $(X,e_X^{\scriptscriptstyle(1)})$ and  $(X,e_X^{\scriptscriptstyle(2)})\in\cC''$ be two objects with same underlying bimodule $X$.
Then
$e_{X,\underline\Omega}^{\scriptscriptstyle(1)}=e_{X,\underline\Omega}^{\scriptscriptstyle(2)}$.
\item
Given an object $(X,e_X)\in\cC''$, then $e_X=\big(e_{X,\underline Y}:X\boxtimes Y\to Y\boxtimes  X\big)_{\underline Y=(Y,e_Y)\in\cC'}$ is uniquely determined by  $e_{X,\underline\Omega}$.
\end{enumerate}
These are proven in Proposition \ref{prop:AllBimodulesInCBoxtimesHilb},
Proposition \ref{prop:UniqueHalfBraiding}, and
Proposition \ref{prop:DeltaDeterminesHalfBraidings}, respectively.
\end{proof}

\begin{prop}
\label{prop:AllBimodulesInCBoxtimesHilb}
The underlying bimodule of an object of $\cC''$ lies in $\cC\otimes_\Vec \Hilb$.
\end{prop}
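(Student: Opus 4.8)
The plan is to test the object $(X,e_X)\in\cC''$ against the objects $\underline\Delta(\Lambda)\in\cC'$ produced by the functor $\underline\Delta$ of Section~\ref{sec: functorial construction of objects in C'}, and to let $\Lambda$ vary. Since $\underline\Delta\colon\Bim(R)\to\cC'$ is a functor and the half-braiding of an object of $\cC''=\cZ^*_{\Bim(R)}(\cC')$ is natural in the $\cC'$-variable, the components $e_{X,\underline\Delta(\Lambda)}$ assemble into a single unitary natural transformation
\[
\eta_\Lambda\colon X\boxtimes\Delta(\Lambda)\xrightarrow{\ \simeq\ }\Delta(\Lambda)\boxtimes X,\qquad \Lambda\in\Bim(R).
\]
Spelling out $\Delta(\Lambda)=\bigoplus_{x\in\Irr(\cC)}x\boxtimes\Lambda\boxtimes\overline x$, this is a natural isomorphism between the two functors
\[
\Lambda\mapsto\bigoplus_{x}(X\boxtimes x)\boxtimes\Lambda\boxtimes\overline x
\qquad\text{and}\qquad
\Lambda\mapsto\bigoplus_{x}x\boxtimes\Lambda\boxtimes(\overline x\boxtimes X)
\]
from $\Bim(R)$ to $\Bim(R)$.

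Both of these are ``two-sided'' functors of the form $\Lambda\mapsto\bigoplus_i A_i\boxtimes\Lambda\boxtimes B_i$, and they preserve direct sums (indeed all colimits) because Connes fusion does. The heart of the argument is therefore an Eilenberg--Watts/co-Yoneda step: a natural isomorphism between two such functors is controlled by the pairs $(A_i,B_i)$, so it can be probed one leg at a time. Concretely, I would apply $\Hom_{\Bim(R)}(L^2R,-)$ to the left tensor-leg of $\eta$. On the second functor the left legs are the simple objects $x$, so only $x=1$ survives and one is left with $\Lambda\boxtimes X$; on the first functor the left legs are the $X\boxtimes x$, and one is left with $\bigoplus_x\Hom_{\Bim(R)}(L^2R,X\boxtimes x)\otimes(\Lambda\boxtimes\overline x)$. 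Evaluating the resulting natural isomorphism at $\Lambda=L^2R$ gives
\[
X\ \simeq\ \bigoplus_{x\in\Irr(\cC)}\overline x\boxtimes H_x,
\qquad H_x:=\Hom_{\Bim(R)}(L^2R,X\boxtimes x),
\]
which is exactly an object of $\cC\otimes_\Vec\Hilb$; the multiplicity spaces $H_x$ are separable because the underlying Hilbert space of $X$ is. As a sanity check, when $\cC=\Vec$ this recovers $X\simeq L^2R\boxtimes H$, i.e.\ the statement that a bimodule over a factor carrying a half-braiding with all of $\Bim(R)$ is a multiple of $L^2R$.

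I expect the Eilenberg--Watts step to be the main obstacle: one must make precise, using the faithfulness and continuity of Connes fusion (Lemma~\ref{lem: fusion is faithful}), that ``projecting onto a single left leg'' is legitimate and really isolates the $\cC$-content of $X$. The subtlety is that neither the finite-index object $\underline\Delta(L^2R)=\bigoplus_{a}a\boxtimes\overline a$ alone, nor the coarse objects $\underline\Delta(\Lambda)$ alone, can detect a hypothetical summand of $X$ carrying no objects of $\cC$: such a summand is invisible to each family separately, since having $\cC$-content is a two-sided phenomenon while each one-sided comparison only sees amplifications of $L^2R$. It is precisely the naturality in $\Lambda$ across \emph{all} of $\Bim(R)$, coupling the left and right legs through the common index $x$, that forces such a summand to vanish. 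Once the representation step is in place, the remaining bookkeeping---identifying the multiplicity spaces and checking separability---is routine.
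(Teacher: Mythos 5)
Your skeleton is the same as the paper's --- test $(X,e_X)$ against the objects $\underline\Delta(\Lambda)$, isolate one tensor leg, and land on $X\cong\bigoplus_{y}y\otimes\Hom_{\Bim(R)}(L^2R,\overline y\boxtimes X)$ --- but the step you yourself flag as ``the main obstacle'' is the entire content of the proof, and you have not supplied it. The problem is that for a general $\Lambda$ the expression $(X\boxtimes x)\boxtimes_R\Lambda\boxtimes_R\overline x$ does not decompose into a left leg and a right leg: everything is fused over $R$, so ``apply $\Hom_{\Bim(R)}(L^2R,-)$ to the left tensor-leg'' is not yet a defined operation, and an Eilenberg--Watts theorem for colimit-preserving endofunctors of $\Bim(R)$ is not something you can just cite in the von Neumann setting.

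The paper fills this gap with a single, concrete choice: evaluate the half-braiding only at $\underline\Delta(\Lambda_0)$ for the \emph{coarse} bimodule $\Lambda_0={}_RL^2R\otimes_{\mathbb C}L^2R_R$. Coarseness makes the fusion literally split, $X\boxtimes\Delta_0\cong\bigoplus_y(X\boxtimes y)\otimes_{\mathbb C}\overline y$ and $\Delta_0\boxtimes X\cong\bigoplus_y y\otimes_{\mathbb C}(\overline y\boxtimes X)$, and each side carries four commuting $R$-actions. Two of them ($R_1$ and $R_4$) are respected because $e_{X,\underline\Delta(\Lambda_0)}$ is a morphism of $\Bim(R)$; the other two ($R_2$ and $R_3$) are respected because of naturality with respect to the subalgebra $R^{\op}\,\bar\otimes\,R=\End_{\Bim(R)}(\Lambda_0)\subset\End_{\cC'}(\underline\Delta(\Lambda_0))$ --- note that this is naturality under the endomorphisms of \emph{one} object, not ``naturality in $\Lambda$ across all of $\Bim(R)$'' as you assert is needed. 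The map is thus an isomorphism of quadri-modules, and applying $\Hom_{R_3,R_4}(L^2R,-)$ is now legitimate: on the left only the $y=1$ summand survives, giving $X$, and on the right one gets $\bigoplus_y y\otimes\Hom_{\Bim(R)}(L^2R,\overline y\boxtimes X)$, which is your formula. So your plan is salvageable, but as written it names the decisive step without proving it, and the mechanism that actually makes ``one leg at a time'' meaningful is the coarseness of $\Lambda_0$ together with its endomorphism algebra, not a general representability principle. (Minor point: your conclusion should read $\overline x\otimes H_x$ with $H_x$ a multiplicity Hilbert space, not $\overline x\boxtimes H_x$.)
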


\begin{proof}
Let $\Lambda_0:={}_RL^2(R)\otimes L^2(R)_R$ and let $(\Delta_0,e_{\Delta_0}):=\underline{\Delta}(\Lambda_0)$.
Given an object $(X,e_X)$ of $\cC''$, the half-braiding $e_X$ yields a bimodule map
\[
e\,:=\,e_{X,\underline{\Delta}(\Lambda_0)}: X\boxtimes_R \Delta_0\to \Delta_0\boxtimes_R X
\]
which, after rewriting
\begin{align*}
X\boxtimes_R \Delta_0
\,&=
X\boxtimes_R\Big(\bigoplus_{y\in \Irr(\cC)} y\boxtimes_R \Lambda_0\boxtimes_R \overline y
\Big)\\
&=
\bigoplus_{y\in \Irr(\cC)} X\boxtimes_R y\boxtimes_R L^2R\otimes L^2R\boxtimes_R \overline y
\\
&=
\bigoplus_{y\in \Irr(\cC)} \big(X\boxtimes_R y\big)\otimes \overline y\quad\,\,\, \text{and}
\\[.5em]
\Delta_0\boxtimes_R X\,&=
\bigoplus_{y\in \Irr(\cC)} y\otimes \big(\overline y\boxtimes_R X\big),
\end{align*}
becomes a map
\[
e\,:\,\,
\bigoplus_{y\in \Irr(\cC)} \big(X\boxtimes_R y\big)\otimes \overline y \,\to \bigoplus_{y\in \Irr(\cC)} y\otimes \big(\overline y\boxtimes_R X\big).
\]
The Hilbert spaces $(X\boxtimes_R y)\otimes \overline y$ and $y\otimes (\overline y\boxtimes_R X)$ each have four actions of $R$, two left actions, and two right actions:
\[
\begin{tikzpicture}[baseline=-.25cm]
\node at (0,-.2) {$(X\boxtimes_R y)\hspace{.7cm}\otimes\hspace{.7cm} \overline y$};
\draw[->] (-2.2,-.6)node[below, scale=.85, inner sep=4, yshift=1] {\small 1\textsuperscript{st}$R$} to[bend left=13] (-2.2+.25,-.3);
\draw[->] (.1,-.6)node[below, scale=.85, inner sep=4, yshift=1] {\small 2\textsuperscript{nd}$R$} to[bend right=13] (.1-.25,-.3);
\draw[->] (1.28,-.6)node[below, scale=.85, inner sep=4, yshift=1] {\small 3\textsuperscript{rd}$R$} to[bend left=13] (1.28+.25,-.3);
\draw[->] (2.3,-.6)node[below, scale=.85, inner sep=4, yshift=1] {\small 4\textsuperscript{th}$R$} to[bend right=13] (2.3-.25,-.3);
\end{tikzpicture}
\,\,\,\qquad\text{and}\qquad\,\,\,
\begin{tikzpicture}[xscale=-1, baseline=-.25cm]
\node at (0,-.2) {$y \hspace{.7cm}\otimes\hspace{.7cm} (\overline y\boxtimes_R X)$};
\draw[->] (-2.2,-.6)node[below, scale=.85, inner sep=4, yshift=1] {\small 4\textsuperscript{th}$R$} to[bend left=13] (-2.2+.25,-.3);
\draw[->] (.1,-.6)node[below, scale=.85, inner sep=4, yshift=1] {\small 3\textsuperscript{rd}$R$} to[bend right=13] (.1-.25,-.3);
\draw[->] (1.3,-.6)node[below, scale=.85, inner sep=4, yshift=1] {\small 2\textsuperscript{nd}$R$} to[bend left=13] (1.3+.25,-.3);
\draw[->] (2.3,-.6)node[below, scale=.85, inner sep=4, yshift=1] {\small 1\textsuperscript{st}$R$} to[bend right=13] (2.3-.25,-.3);
\end{tikzpicture}
\]
In order to keep track of all these copies of $R$, we denote them $R_1$, $R_2$, $R_3$, $R_4$, respectively.

The map $e$
is a morphism in $\Bim(R)$, meaning that it is an $R_1$-$R_4$-bimodule map.
This map also has the property of being natural with respect to endomorphisms of $\underline{\Delta}(\Lambda_0)$.
Restricting attention to
\[
\End_{\Bim(R)}(\Lambda_0)= R^{\text{op}}\,\bar\otimes\,R\,\subset\, \End_{\cC'}(\underline\Delta(\Lambda_0)),
\]
this translates into the property of $e$ 
being an $R_3$-$R_2$-bimodule map
(or rather an $R_2^{\op}$-$R_3^{\op}$-bimodule map).
All in all, we learn that there is an isomorphism of \emph{quadri-modules}:
\[
\bigoplus_{y\in \Irr(\cC)} \big(X\boxtimes_R y\big)\otimes \overline y \,\,\,\cong \bigoplus_{y\in \Irr(\cC)} y\otimes \big(\overline y\boxtimes_R X\big).
\]

Now, applying $\Hom_{R_3,R_4}(L^2R, -)$ to the above isomorphism, we get an $R_1$-$R_2$-bimodule isomorphism:
\begin{align*}
X 
\,&\cong\,
\Hom_{R_3,R_4}\Big(L^2R, \bigoplus_{y\in \Irr(\cC)} (X\boxtimes y) \otimes \overline y\Big)
\\&\cong\,
\Hom_{R_3,R_4}\Big(L^2R, \bigoplus_{y\in \Irr(\cC)} y \otimes (\overline y\boxtimes X)\Big)
\cong
\bigoplus_{y\in \Irr(\cC)} y\otimes \Hom_{\Bim(R)}\big(L^2R, \overline y\boxtimes X\big).
\end{align*}
Finally, $\Hom_{\Bim(R)}(L^2R, \overline y\boxtimes X)$ is just some Hilbert space (because $L^2R$ is irreducible), and so
the above isomorphism exhibits $X$ as an element of $\cC\otimes_\Vec \Hilb$.
\end{proof}

Let $\cC'_{abs} \subset \cC'$ be the full subcategory of absorbing objects of $\cC'$. This is a non-unital tensor category, and it makes sense to talk about half-braidings with $\cC'_{abs}$ (the axioms of a half-braiding never mention unit objects).

\begin{lem}\label{lem: RL half relative commutant}
Let
$\underline\Omega=(\Omega,e_\Omega)\in \cC'$ be an absorbing object,
let $X$ be a right $R$-module, and let
$
u:X\boxtimes\Omega\to X\boxtimes\Omega
$
be a right module map that commutes with
$\id_X\otimes\End_{\cC'}(\underline{\Omega})$.
Then $u=v\boxtimes \id_{\Omega}$ for some right module map $v:X\to X$.
\end{lem}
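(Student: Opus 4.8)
The plan is to deduce the statement directly from the two main results of the preceding subsections, Proposition~\ref{prop: comm on cyc} and Proposition~\ref{prop: RelComm inside End_C'(Delta)}, by a padding trick that realises $X\boxtimes\Omega$ as a cyclic fusion. First I would reduce to the case where $\underline\Omega$ has the standard form $\underline\Delta(\Lambda)$. Since absorbing objects of $\cC'$ are unique up to isomorphism, and $\underline\Delta(\Lambda)$ is absorbing whenever $\Lambda$ is absorbing in $\Bim(R)$ (Theorem~\ref{thm:Absorbing}), I may take $\Lambda={}_RL^2R\otimes\ell^2(\mathbb N)\otimes L^2R_R$ (which is coarse) and fix an isomorphism $\psi\colon\underline\Omega\to\underline\Delta(\Lambda)$ in $\cC'$. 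Transporting the whole statement along the induced isomorphism $\id_X\boxtimes\psi\colon X\boxtimes\Omega\to X\boxtimes\Delta(\Lambda)$ of underlying bimodules, I may assume $\Omega=\Delta(\Lambda)$ and $M:=\End_{\cC'}(\underline\Omega)=\End_{\cC'}(\underline\Delta(\Lambda))$.

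The key step is to set $\Lambda_2:={}_RL^2R\otimes X_R$, which is coarse (Definition~\ref{definition of coarse}), and to consider $\cH=\bigoplus_{x\in\Irr(\cC)}\big[\,x\boxtimes\Lambda\boxtimes\overline x\boxtimes\Lambda_2\boxtimes -\,\big]_{\text{cyclic}}$. Unwinding the cyclic gluing, the right $R$-action of $\overline x$ fuses into the left factor $L^2R$ of $\Lambda_2$, while the right factor $X$ of $\Lambda_2$ fuses into the left $R$-action of $x$; hence $\cH\cong(X\boxtimes\Omega)\boxtimes_R L^2R\cong X\boxtimes\Omega$ by the unit law. Under this identification the algebra $M_1=\End_{\cC'}(\underline\Delta(\Lambda))=M$ acts exactly as $\id_X\boxtimes M$ (it operates on the $\Delta(\Lambda)=\Omega$ strands and, being bimodule maps, commutes with the fusion of the trailing $L^2R$), whereas the left factor $L^2R$ of $\Lambda_2$ contributes the subalgebra $\End({}_RL^2R)\subset\End_{\Bim(R)}(\Lambda_2)\subset M_2$. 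Crucially, $\End({}_RL^2R)$ is precisely the right $R$-action on $L^2R$, and since $P\boxtimes_R L^2R\cong P$ intertwines $\id_P\boxtimes\End({}_RL^2R)$ with the right action on any right module $P$, the action of $\End({}_RL^2R)$ on $\cH$ corresponds to the right $R$-action on $X\boxtimes\Omega$.

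Now I would invoke Proposition~\ref{prop: comm on cyc}: on $\cH$ the commutant of $M_1$ is $M_2=\End_{\cC'}(\underline\Delta(\Lambda_2))$. Given a right module map $u\colon X\boxtimes\Omega\to X\boxtimes\Omega$ commuting with $\id_X\boxtimes M$, the operator $\tilde u:=u\boxtimes\id_{L^2R}$ is well defined (because $u$ is a right module map) and lies in $M_1'=M_2$. As $u$ commutes with the right $R$-action, $\tilde u$ commutes with $\End({}_RL^2R)$, so $\tilde u\in Z_{M_2}(\End({}_RL^2R))$. By Proposition~\ref{prop: RelComm inside End_C'(Delta)} applied to the coarse bimodule $\Lambda_2={}_RL^2R\otimes X_R$, this relative commutant equals $\End(X_R)$, embedded in $M_2$ as the subalgebra acting on the $X$-factor of $\Lambda_2$. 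Translating back through the identification $\cH\cong X\boxtimes\Omega$, that $X$-factor is the leading $X$, so $\tilde u$, and hence $u$, has the form $v\boxtimes\id_\Omega$ for some $v\in\End(X_R)$, which is the claim.

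The main obstacle will be the careful bookkeeping in the second paragraph: one must track the several copies of $R$ occurring in the cyclic fusion and verify precisely that (a) $M_1$ acts as $\id_X\boxtimes M$ under the unit-law identification, and (b) the condition ``$u$ is a right module map'' corresponds exactly to ``$\tilde u$ commutes with $\End({}_RL^2R)$''. The latter rests on the faithfulness of Connes fusion (Lemma~\ref{lem: fusion is faithful}) together with the elementary computation that the operators on $P\boxtimes_R L^2R$ commuting with $\id_P\boxtimes\End({}_RL^2R)$ are exactly $\End(P_R)\boxtimes\id_{L^2R}$. Once these identifications are pinned down, the two cited propositions do all of the substantive work.
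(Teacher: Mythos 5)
Your proposal is correct and follows essentially the same route as the paper: reduce to $\underline\Omega\cong\underline\Delta(\Lambda)$ via Theorem~\ref{thm:Absorbing}, realise $X\boxtimes\Omega$ as the cyclic fusion $\bigoplus_x[x\boxtimes\Lambda\boxtimes\overline x\boxtimes\Lambda_2\boxtimes-]_{\text{cyclic}}$ with $\Lambda_2={}_RL^2R\otimes X_R$, and then apply Proposition~\ref{prop: comm on cyc} followed by Proposition~\ref{prop: RelComm inside End_C'(Delta)}. The extra bookkeeping you flag in your final paragraph is exactly what the paper leaves implicit, so your write-up is if anything slightly more careful than the original.
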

\begin{proof}
By Theorem \ref{thm:Absorbing}, we can write $\underline \Omega$ as $\underline \Delta(\Lambda)$ for some absorbing bimodule $\Lambda$.
In particular, we then have $\Omega=\bigoplus_{x\in\Irr(\cC)} x\boxtimes \Lambda\boxtimes \overline x$.
Letting $\Lambda_2:={}_RL^2R\otimes_{\mathbb C} X_R$, we can then identify $X\boxtimes \Omega$ with
\[
\bigoplus_{x\in\Irr(\cC)}
\big[ x \boxtimes \Lambda \boxtimes \overline{x} \boxtimes \Lambda_2 \boxtimes - \big]_{\text{cyclic}}
\]
By Proposition \ref{prop: comm on cyc}, 
since $u$ commutes with $\End_{\cC'}(\underline\Delta(\Lambda))$, it lies in $\End_{\cC'}(\underline\Delta(\Lambda_2))$.
Now, we also know that $u$ commutes with
$R^{\op}=\End({}_RL^2R)$.
By Proposition \ref{prop: RelComm inside End_C'(Delta)}, it therefore comes from some element of $\End(X_R)$, which we may call $v$.
In other words, $u=v\boxtimes \id_{\Omega}$.
\end{proof}

\begin{prop}
\label{prop:UniqueHalfBraiding}
An object $X\in \Bim(R)$ admits at most one half-braiding with $\cC'_{abs}$.
\end{prop}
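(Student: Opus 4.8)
The plan is to reduce the statement to the uniqueness of a single half-braiding component, and then to pin that component down using the commutant computation on cyclic fusions. Suppose $e^{(1)}$ and $e^{(2)}$ are two half-braidings of $X$ with $\cC'_{abs}$, and for each absorbing $\underline Y\in\cC'_{abs}$ set $u_{\underline Y}:=e^{(2)}_{X,\underline Y}\circ\big(e^{(1)}_{X,\underline Y}\big)^{-1}\in\End_{\Bim(R)}(Y\boxtimes X)$, an invertible operator. First I would record its formal properties: $u_{\underline Y}$ is natural in $\underline Y$, and it commutes with the action of $\End_{\cC'}(\underline Y)$, since both $e^{(i)}$ are natural with respect to the morphisms of $\cC'$. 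Because the tensor product of absorbing objects is again absorbing, $\cC'_{abs}$ is a (non-unital) tensor subcategory, and by Theorem~\ref{thm:Absorbing} together with uniqueness of the absorbing object, every object of $\cC'_{abs}$ is isomorphic to $\underline\Delta(\Lambda)$ for $\Lambda$ a coarse absorbing bimodule. Since $u$ is natural, $u_{\underline Y}$ is conjugate to $u_{\underline\Omega}$ for the single choice $\underline\Omega=\underline\Delta(\Lambda)$; hence it suffices to prove $u_{\underline\Omega}=\id$.

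The heart of the argument is to show that $u_{\underline\Omega}$ is a \emph{scalar}. Here I would bring in a second coarse absorbing bimodule $\Lambda_2$ and work on the cyclic fusion $H=\bigoplus_{x\in\Irr(\cC)}\big[x\boxtimes\Lambda\boxtimes\overline x\boxtimes\Lambda_2\boxtimes X\big]_{\text{cyclic}}$ (well defined since $\Lambda$ and $\Lambda_2$ are coarse), on which the factors $M_1=\End_{\cC'}(\underline\Delta(\Lambda))$ and $M_2=\End_{\cC'}(\underline\Delta(\Lambda_2))$ act as each other's commutants by Proposition~\ref{prop: comm on cyc}. The operator $u_{\underline\Omega}$, being a local operation on the sub-configuration $\Delta(\Lambda)\boxtimes X=\bigoplus_x x\boxtimes\Lambda\boxtimes\overline x\boxtimes X$, acts on $H$ and commutes with $M_1$ by the naturality recorded above. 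The key point is that it \emph{also} commutes with $M_2$: writing a generic element of $M_2$ as a sum of the tube-algebra operators of Theorem~\ref{thm: endo} wrapping a $\cC$-strand around the $\Lambda_2$-block of the cylinder, one isotopes this strand vertically away from the $\Lambda$-block — using the (I=H) relation for the interchange and the half-braiding of $X$ to carry it across the $X$-strand — to see that $[u_{\underline\Omega},M_2]=0$. Granting this, $u_{\underline\Omega}\in M_1'\cap M_2'=Z(M_2)=\C$, since $M_2$ is a factor (Theorem~\ref{thm:Factor}); thus $u_{\underline\Omega}=\lambda\cdot\id$ with $\lambda\in\C^{\times}$. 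The same computation shows $u$ is a scalar on every absorbing object.

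Finally I would fix the scalar. From the hexagon axiom for $e^{(1)}$ and $e^{(2)}$, and the fact that the forgetful functor $\cC'\to\Bim(R)$ has identity tensor constraints, one gets $u_{\underline\Omega\otimes\underline\Omega}=(\id_\Omega\boxtimes u_{\underline\Omega})\circ(u_{\underline\Omega}\boxtimes\id_\Omega)$ once $u_{\underline\Omega}$ is central, so the right-hand side equals $\lambda^2$; on the other hand, naturality along an isomorphism $\underline\Omega\otimes\underline\Omega\cong\underline\Omega$ forces $u_{\underline\Omega\otimes\underline\Omega}$ to equal the scalar $\lambda$ of $\underline\Omega$. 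Hence $\lambda^2=\lambda$, and since $\lambda\neq0$ we conclude $\lambda=1$, i.e. $u_{\underline\Omega}=\id$; by the naturality reduction this gives $e^{(1)}=e^{(2)}$ throughout $\cC'_{abs}$. I expect the only genuinely delicate step to be the commutation $[u_{\underline\Omega},M_2]=0$ on the cylinder: it is geometrically transparent in the cyclic graphical calculus, but making it rigorous requires carefully tracking how the half-braiding of $X$ lets the $M_2$-strand be slid across the $X$-strand, and this is precisely the place where the full half-braiding axioms — rather than mere $M_1$-naturality — are used. Every other step is a formal manipulation with naturality and the hexagon.
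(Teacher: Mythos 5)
Your first and last steps track the paper's proof closely: forming $u_{\underline Y}=e^{(2)}_{X,\underline Y}\circ\bigl(e^{(1)}_{X,\underline Y}\bigr)^{-1}$, noting it commutes with $\End_{\cC'}(\underline Y)$ by naturality, reducing to a single absorbing $\underline\Omega=\underline\Delta(\Lambda)$, and finishing with a hexagon-plus-$\phi:\underline\Omega\boxtimes\underline\Omega\to\underline\Omega$ argument are all sound and are exactly the skeleton of the paper's argument. The gap is in the middle step, where you claim $u_{\underline\Omega}$ is a scalar. Two things go wrong there. First, Proposition~\ref{prop: comm on cyc} computes mutual commutants on $\bigoplus_x\bigl[x\boxtimes\Lambda\boxtimes\overline x\boxtimes\Lambda_2\boxtimes-\bigr]_{\text{cyclic}}$ with nothing else on the cylinder; once you insert a separate $X$-strand, $M_1$ and $M_2$ are no longer each other's commutants (for instance $\End_{\Bim(R)}(X)$ acts and commutes with both), so $M_1'\cap M_2'$ need not be $\C$. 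Second, and more seriously, the proposed isotopy sliding the tube-algebra strand of an $M_2$-element across the $X$-strand requires a braiding of $X$ with objects of $\cC$ --- the strands in Theorem~\ref{thm: endo} are labelled by $a\in\Irr(\cC)$ --- and no such braiding is part of the data: $X$ carries half-braidings with $\cC'$, not with $\cC$. (If it did half-braid with $\cC$, it would lie in $\cC'$ rather than $\cC''$.) So $[u_{\underline\Omega},M_2]=0$ is not established, and in fact the scalar conclusion is stronger than what is true at this stage of the argument.

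What the commutant machinery actually yields --- and what the paper proves in Lemma~\ref{lem: RL half relative commutant}, by applying Proposition~\ref{prop: comm on cyc} with the coarse bimodule $\Lambda_2:={}_RL^2R\otimes_{\mathbb C}X_R$ (which absorbs $X$ into the cyclic picture, rather than leaving it as a separate strand) and then Proposition~\ref{prop: RelComm inside End_C'(Delta)} --- is only that $u_{\underline\Omega}=v\boxtimes\id_\Omega$ for some $v\in\End(X_R)$, which need not be a scalar. This matters for your last paragraph: the identity $u_{\underline\Omega\boxtimes\underline\Omega}=(\id_\Omega\boxtimes u_{\underline\Omega})\circ(u_{\underline\Omega}\boxtimes\id_\Omega)$ uses centrality of $u_{\underline\Omega}$ to interchange the $e^{(1)}$- and $e^{(2)}$-factors, so the $\lambda^2=\lambda$ trick only covers the scalar case. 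The paper instead substitutes $e^{(1)}_{X,\underline\Omega}=e^{(2)}_{X,\underline\Omega}\circ(v\boxtimes\id_\Omega)$ into one hexagon diagram, compares with the hexagon for $e^{(1)}$, cancels the invertible maps to get $\id_\Omega\boxtimes e^{(1)}_{X,\underline\Omega}=\id_\Omega\boxtimes e^{(2)}_{X,\underline\Omega}$, and concludes by faithfulness of Connes fusion (Lemma~\ref{lem: fusion is faithful}). If you replace your scalar claim by the weaker (correct) statement $u_{\underline\Omega}=v\boxtimes\id_\Omega$ and run the hexagon argument for general $v$, your outline becomes the paper's proof.
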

\begin{proof}
Let $e^{\scriptscriptstyle(1)}_X$ and $e^{\scriptscriptstyle(2)}_X$ be two half-braidings.
Given an object $\underline{\Omega}\in \cC'_{abs}$, with underlying bimodule $\Omega\in\Bim(R)$, we need to show that the two maps
$e_1:=e^{\scriptscriptstyle(1)}_{X,\underline \Omega}$
and $e_2:=e^{\scriptscriptstyle(2)}_{X,\underline \Omega}$
are equal.
Let $u:=e_2^{-1}\circ e_1$.
The maps $e_1$ and $e_2$ are natural with respect to endomorphisms of $\underline{\Omega}$, and so $u$ commutes with $\id_X\otimes\End_{\cC'}(\underline{\Omega})$.
By Lemma \ref{lem: RL half relative commutant},
we may therefore write it as $u=v\boxtimes \id_{\Omega}$ for some $v\in \End_{\Bim(R)}(X)$.
All in all, we get a commutative diagram
\[
\xymatrix @C=1.6cm{
X\boxtimes\Omega\ar@/^2pc/[rr]^{e_1}
\ar[r]^{v\;\!\boxtimes\;\! \id_{\Omega}}&X\boxtimes\Omega\ar[r]^{e_2}&\Omega\boxtimes X.
}
\]

Fix an isomorphism $\phi: \underline{\Omega}\boxtimes \underline{\Omega} \to \underline{\Omega}$ in $\cC'$,
and let us denote by the same letter the correponding isomorphism $\Omega \boxtimes \Omega\to \Omega$.
By combining the `hexagon' axiom with the statement that the half-braiding is natural with respect to $\phi$, we get the following commutative diagrams (as in the proof of Proposition \ref{prop:DeltaDeterminesHalfBraidings}):
\begin{equation}\label{eq: hex. axiom in proof (2)}
\begin{matrix}\xymatrix{
X \boxtimes \Omega \boxtimes \Omega
\ar[rr]^{e_1\boxtimes \id_\Omega}
\ar[d]^{\id_X\boxtimes\;\! \phi}
&&
\Omega\boxtimes X\boxtimes \Omega
\ar[rr]^{\id_\Omega\boxtimes e_1}
&&
\Omega \boxtimes \Omega\boxtimes X
\ar[d]^{\phi\boxtimes \id_X}
\\
X\boxtimes \Omega
\ar[rrrr]^{e_1}
&&&&
\Omega\boxtimes X
}
\end{matrix}
\end{equation}
and
\begin{equation}\label{eq: hex. axiom in proof (1)}
\begin{matrix}\xymatrix{
X \boxtimes \Omega \boxtimes \Omega
\ar[rr]^{e_2\boxtimes \id_\Omega}
\ar[d]^{\id_X\boxtimes\;\! \phi}
&&
\Omega\boxtimes X\boxtimes \Omega
\ar[rr]^{\id_\Omega\boxtimes e_2}
&&
\Omega \boxtimes \Omega\boxtimes X
\ar[d]^{\phi\boxtimes \id_X}
\\
X\boxtimes \Omega
\ar[rrrr]^{e_2}
&&&&
\Omega\boxtimes X.
}
\end{matrix}
\end{equation}
Horizontally precomposing (\ref{eq: hex. axiom in proof (1)}) with
\[
\xymatrix{
X \boxtimes \Omega \boxtimes \Omega
\ar[d]^{\id_X\boxtimes\;\! \phi}
\ar[rr]^{v\boxtimes \id_{\Omega \boxtimes \Omega}}
&&
X \boxtimes \Omega \boxtimes \Omega
\ar[d]^{\id_X\boxtimes\;\! \phi}
\\
X\boxtimes \Omega
\ar[rr]^{v\boxtimes \id_\Omega}
&&
X\boxtimes \Omega
}
\]
yields the following diagram
\[
\xymatrix{
X \boxtimes \Omega \boxtimes \Omega
\ar[rr]^{e_1\boxtimes \id_\Omega}
\ar[d]^{\id_X\boxtimes\;\! \phi}
&&
\Omega\boxtimes X\boxtimes \Omega
\ar[rr]^{\id_\Omega\boxtimes e_2}
&&
\Omega \boxtimes \Omega\boxtimes X
\ar[d]^{\phi\boxtimes \id_X}
\\
X\boxtimes \Omega
\ar[rrrr]^{e_1}
&&&&
\Omega\boxtimes X.
}
\]
The latter is almost identical to (\ref{eq: hex. axiom in proof (2)}), but for the top right arrow. All maps in sight being isomorphisms, it follows that $\id_\Omega\boxtimes e_1=\id_\Omega\boxtimes e_2$.
At last, by Lemma \ref{lem: fusion is faithful}, 
we conclude that $e_1=e_2$.
\end{proof}


\bibliographystyle{amsalpha}
{\footnotesize{
\bibliography{../../../../Documents/research/penneys/bibliography}
}}
\end{document}